\theoremstyle{plain}
\newtheorem{theo}{Theorem}[section] 
\newtheorem{prop}[theo]{Proposition}
\newtheorem{lemme}[theo]{Lemma}
\newtheorem{cor}[theo]{Corollary}
\newtheorem{defin}[theo]{Definition}
\theoremstyle{definition}
\newtheorem{rem}[theo]{Remark}
\newtheorem{exe}[theo]{Example}
\newcommand{\R}{{\mathbb{R}}}
\newcommand{\N}{{\mathbb{N}}}
\newcommand{\C}{{\mathbb{C}}}
\newcommand{\Z}{{\mathbb{Z}}}
\newcommand{\be}{{\beta}}
\newcommand{\al}{{\alpha}}
\newcommand{\la}{{\lambda}}
\newcommand{\si}{{\sigma}}
\newcommand{\ga}{{\gamma}}
\newcommand{\om}{{\omega}}
\newcommand{\Om}{\Omega}
\newcommand{\La}{{\Lambda}}
\newcommand{\Si}{{\Sigma}}
\newcommand{\ep}{\epsilon}
\newcommand{\Ci}{{\mathcal{C}}^{\infty}} 
\newcommand{\op}{\operatorname}
\newcommand{\con}{\overline}
\newcommand{\bigo}{\mathcal{O}}
\newcommand{\biginf}{\mathcal{O}_{\infty}}
\newcommand{\D}{\mathcal{D}}
\newcommand{\symb}{\mathcal{S}}
\newcommand{\lag}{\mathcal{L}} 
\newcommand{\Hilb}{\mathcal{H}}
\newcommand{\wt}{\widetilde}
\newcommand{\pol}{\mathcal{P}}
\newcommand{\lan}{\mathcal{L}}
\newcommand{\kpi}{ \Bigl( \frac{k}{2 \pi} \Bigr)^n}
\newcommand{\EE}{\mathbf{E}}
\newcommand{\Uu}{\rho}
\newcommand{\Vvv}{\tilde \rho}
\newcommand{\Ppp}{p}
\newcommand{\Fam}{\mathcal{F}}
\newcommand{\clr}{\color{black}}
\newcommand{\clb}{\color{black}}
\newcommand{\cB}{\color{black}}
\newcommand{\cA}{\color{black}}
\begin{document}

\title{Landau levels on a compact manifold}
\author{L. Charles}

\maketitle

\begin{abstract} We consider a magnetic Laplacian on a compact manifold, with
  a constant non-degenerate magnetic field. In the large field limit, it is
  known that the eigenvalues are grouped in clusters, the corresponding sums of
  eigenspaces  being called the Landau levels. The first level has been
  studied in-depth as a natural generalization of the K\"ahler quantization. The
  current paper is devoted to the higher levels: we compute their
  dimensions as Riemann-Roch numbers, study the associated Toeplitz algebras
  and prove that each level is isomorphic with a quantization twisted
  by a convenient auxiliary bundle.   
\end{abstract}

\section{Introduction}

The main theme of this paper and its companion  \cite{oim_copain} is the Landau levels of compact
manifolds. For a physicist, the Landau quantization refers to a charged
particle confined to two dimensions and exposed to a magnetic field. It has 
discrete energy levels connected by ladder operators. Besides the planar geometry
considered by Landau \cite{La30}, the case of Riemann surfaces has been investigated in
the context of the quantum Hall effect, see \cite{IeLi94} and references
therein.

On a mathematical point of view, a natural generalisation is the Bochner
Laplacian acting on the sections of a Hermitian line bundle $L$ on a compact
manifold. This Laplacian is defined from two data: a Riemannian metric of the
base and a
connection of the line bundle. The idea underlying this work is that when the curvature of the connection is non-degenerate and large with respect to the metric, the
spectrum of the Laplacian exhibits a structure similar to the Landau
quantization.

More specifically, let us assume that the curvature is related to the Riemannian metric
by a complex structure, and consider the spectrum of the Laplacian of $L^{k}$
in the large $k$ limit. 
In this setting,  Faure-Tsujii
\cite{FaTs15} have shown
that the eigenvalues are grouped in clusters, each of them
representing a generalised Landau level. The first level was previously
identified by Guillemin-Uribe {\cite{GuUr88}} and studied further by
Borthwick-Uribe \cite{BoUr96} as a generalization of K\"ahler quantization. In
particular, its dimension is given  by a Riemann-Roch number and it comes with
an algebra of Toeplitz operators quantizing the classical Poisson algebra. 

Our goal in this paper is to extend these results to the higher Landau levels.
Our main results are:
\begin{enumerate}
\item  the dimension of the $m$-th Landau level is the
Riemann-Roch number of $L^k \otimes F_m$ when $k$ is sufficiently large, where
$F_m$ is the symmetric $m$-th power of the
complex tangent bundle of the base.
\item there is an algebra of Berezin-Toeplitz operators associated to the
  $m$-th Landau level, the symbol of these operators being sections of the
  endomorphism bundle $\op{End} F_m$.
  \item the $m$-th Landau level is isomorphic
with the first Landau level twisted by $F_m$ through a ladder operator, these
isomorphisms are compatible with the Berezin-Toeplitz operators.
\end{enumerate}

The main ingredient to establish these results is an asymptotic expansion of
the Schwartz kernel of the spectral projector of each level. For the first
level, when the complex structure is integrable, the K\"ahler case, this kernel is the Szeg\"o kernel. Its
asymptotic is well-understood since the seminal work by Boutet de
Monvel and Sj\"ostrand \cite{BoSj} and has been
used in numerous papers starting from \cite{BoGu},  \cite{BoMeSc}, \cite{Ze}. In the non-K\"ahler case,
the asymptotics of the first level projector kernel has been obtained by Borthwick-Uribe
\cite{BoUr07} and Ma-Marinescu \cite{MaMa08}. For the higher Landau levels, this asymptotic
expansion will be proved in our second paper \cite{oim_copain}. 

In the current paper,
we will rely on this asymptotic expansion or more generally we will show that
the previous results hold for higher Landau levels defined as the image of any projector whose
Schwartz kernel has the convenient asymptotics. Here the inspiration is the
generalised Toeplitz structure of Boutet de Monvel-Guillemin \cite{BoGu} and our previous work
\cite{oim}, the idea being that the only important feature of the Landau levels is
this asymptotic expansion.

The main tool we will use for the proofs is a particular class of operators
containing the Landau level projectors, the associated Toeplitz operators and
also the generalised ladder operators. The operators in this class are controlled at first order by their symbols, which
are defined as sections of a bundle of non-commutative algebras. Each of these algebras is generated by the spectral projectors and ladder
operators of a Landau Hamiltonian. By this mechanism, the basic properties of
the Landau quantization are transferred to the Bochner Laplacian.

To finish this general introduction, let us mention the two contemporaneous papers
\cite{Yuri_1}, \cite{Yuri_2} by Yuri Kordyukov on the same subject, \cB
 which contain some results on Berezin-Toeplitz operators common with
 ours, cf. Remark \ref{sec:comp_Yuri} for a comparison. However, the
computation of the dimension of the $m$-th Landau level, the ladder operators
and the general operator algebras we use, do not appear in any other work. \clb  
Let us
mention as well that in a related but different context, belonging to homogeneous
microlocal analysis instead of semi-classical analysis, Boutet de
Monvel-Guillemin \cite[Chapter 15]{BoGu}  and Epstein-Melrose  \cite[Chapter 6]{EM} have considered generalised Szegö
projections at higher level with associated Toeplitz algebras, which are similar to our constructions.

\subsection{Magnetic Laplacian} 

\subsubsection*{Constant magnetic intensity} 
Consider a Riemannian manifold $(M,g)$
with a Hermitian line bundle $L $ equipped with a connection $\nabla$.
Associated to these data is a Laplacian $\frac{1}{2} \nabla^* \nabla$ acting on $\Ci ( M,
L)$, which from the physical point of view is a  Schr\"odinger operator with a
magnetic field $\Om = i \op{curv} ( \nabla) \in \Om^2 (M, \R)$.

We will assume
that $\Om$ is non-degenerate at each point and has a constant magnetic
intensity with respect to $g$ in
the following sense. In the case
where $M$ is a surface, the magnetic intensity is the positive function
defined by $ |\Om| = B \op{vol}_g$, where $\op{vol}_g $ is the Riemannian
volume, and we merely assume that $B$ is constant. In higher dimension, $\Om$ being
non-degenerate, the dimension of $M$ is even, say $2n$. At any $p \in M$,
there exists a skew-symmetric endomorphism  $j_B (p)$ of $(T_pM, g_p)$ such that $\Om_p (X,Y) = g_p
( j_B(p) X, Y)$.  The eigenvalues of $j_B(p)$ are $ \pm i B_{\ell} (p)$ with $0 <
B_1(p) \leqslant \ldots \leqslant B_n (p)$. We assume that these eigenvalues
are all equal, 
$B_1 = \ldots =  B_{n}$, and do not depend on $p$. Equivalently $j_B(p) = B j(p)$ with $B$ a positive constant and $j$ an almost
complex structure of $M$ compatible with $g$, cf. Proposition 2.5.6. of \cite{McSa}.

So we have that $\Om = B \om$ where $B >0$ is constant and $\om$ is a
symplectic form of $M$ defined by $\om ( X,Y) = g (jX, Y)$. We will consider the large
$B$ limit. To do this, we will replace $L$ by $L^k$, $k \in \N$, so that the
curvature of $\nabla^{L^k}$ is $k B \om$, and let $k$ tend to infinity.
We will also normalise the metric so that $B=1$, and our magnetic intensity is
simply $k$.








Alternatively, we can introduce our data as follows. Consider a compact symplectic
manifold $(M^{2n}, \om)$ with a compatible almost complex structure $j$ and a
Hermitian line bundle $L \rightarrow M$ with a connection $\nabla$ having
curvature $\frac{1}{i} \om$.  Such a bundle is called a {\em prequantum}
bundle in the Kostant-Souriau theory, where it is used to define the geometric
quantization of $M$.
For any positive integer $k$, we consider the Laplacian
\begin{gather}
\Delta_k = \tfrac{1}{2}( \nabla^{L^k}) ^* \nabla^{L^k} : \Ci ( M ,
L^k)\rightarrow \Ci ( M , L^k)
\end{gather}
with $\nabla^{L^k} : \Ci ( M, L^k) \rightarrow \Om^1
(M, L^k)$ the covariant derivative induced by $\nabla$, and the Riemannian
metric  $g(X,Y) = \om
(jX, Y)$ independent of $k$.


\subsubsection*{Earlier results}

It is known that the spectrum $\si ( \Delta_k)$ of $\Delta_k$ is
partitioned into clusters around each point of $k ( \frac{n}{2} + \N)$ in the
large $k$ limit. More precisely, for any $m \in \N$, define the interval $I_m$
$$ I_0 =  [0, \tfrac{n}{2} + \tfrac{1}{2}], \qquad I_m =  (\tfrac{n}{2}  +m) + [-\tfrac{1}{2} , \tfrac{1}{2} [ \text{ if
} m \geqslant 1 ,$$
so that we have a partition $[0,\infty [ = \bigcup
_{m\in \N} I_m$. Then we set 
\begin{gather} \label{eq:def_Landau}
\Si_{m,k} :=  \bigl( k^{-1} \si (\Delta_k) \bigr)  \cap I_m 
, \qquad \Hilb_{m,k} := \bigoplus_{\la \in \Si_{m,k}} \ker ( k^{-1}
\Delta_k - \la). 
\end{gather}
It was proved by Faure-Tsuji \cite{FaTs15} that
\begin{gather} \label{eq:Si_m:Faure_Tsujii}
\Si_{m,k} \subset
\Bigl( \tfrac{n}{2}+ m + C_m k^{-\frac{1}{4}} [-1,1] \Bigr) 
\end{gather}
and by Demailly \cite{Dem} that
\begin{gather} \label{eq:asymptot_Demailly}
\op{dim} \Hilb_{m,k} = \Bigl( \frac{k}{2 \pi} \Bigr)^n { {m+n-1} \choose
  {n-1} }   \op{vol} (M) + \op{o} (k^n) 
\end{gather} 
For a surface $(n=1)$ with a constant Gauss curvature $S$, more precise results
have been obtained by Iengo-Li \cite{IeLi94}: if $k + m S >0$, then
\begin{gather}  \label{eq:surface_eignvalue_dim}
\begin{split} 
  \Si_{m,k} = \bigl\{  \tfrac{1}{2} + m   + k^{-1} S \tfrac{m(m+1)}{2} \bigr\} , \\
\op{dim} \Hilb_{m,k} =  \tfrac{k}{2 \pi}   \op{vol} (M) + (\tfrac{1}{2} + m )
\chi (M).
\end{split} 
\end{gather}
So in this case, when $k$ is sufficiently large, the $m$-th eigenvalue is
degenerate with multiplicity equal to  $\op{dim} \Hilb_{m,k}$.

The first cluster has been further studied. 
In the K\"ahler case,
that is when the complex structure
$j$ is integrable, $L$ has itself a natural holomorphic structure such that
$\con{\partial}_L = \nabla^{0,1}$ and by Kodaira identities, we have when $k$
is sufficiently large that
\begin{gather}
\Si_{0,k} = \{ \tfrac{n}{2} \} , \qquad \Hilb_{0,k} = H^0 ( M, L^k).
\end{gather}
and the dimension of $\Hilb_{0,k}$ is given by the Riemann-Roch-Hirzebruch
Theorem
\begin{gather} \label{eq:RRH}
\dim \Hilb_{0,k} = \int_M \exp \Bigl( \frac{ k \om }{2\pi
} \Bigr) \; \op{Todd} M 
\end{gather}
Here $\op{Todd} M$ is the Todd class of
$(M,j)$.

More generally, when $j$ is not necessarily integrable, it was proved by Guillemin-Uribe \cite{GuUr88} that
\begin{gather} \label{eq:si_0-k_Guillemin_Uribe}
\Si_{0,k} \subset \Bigl( \tfrac{n}{2}
+ C_0 k^{-1} [-1,1] \Bigr)
\end{gather}
and by Borthwick-Uribe \cite{BoUr96} that the dimension of
$\Hilb_{0,k}$ is given by \eqref{eq:RRH} when $k$ is sufficiently large. 

 \subsection{Main results}  

In the sequel $m \in \N$ is a fixed non negative integer and all the results
hold in the large $k$ limit, with estimates, bounds  depending on $m$.

\subsubsection*{Dimension} 
 Our first result is the computation of the dimension of $\Hilb_{m,k}$, as the
Riemann-Roch number of $L^k \otimes \D_m (TM)$ where $\D_m (TM)$ is the $m$-th
symmetric power of $(T^{0,1} M)^*$. Here, $T^{0,1}M = \ker (j + i )$ and  $j$
is the almost complex structure introduced previously. The reason why we prefer
to work with $(T^{0,1}M)^*$ instead of the isomorphic bundle $T^{1,0}M$ should
be clear later.

\begin{theo} \label{theo:dim_landau}
  If $k$ is sufficiently large, then
  $$ \op{dim} \Hilb_{m,k} = \int_M \exp \Bigl( \frac{ k \om }{2\pi
  } \Bigr) \; \op{ch} ( \D_m (TM) ) ) \op{Todd} M $$
with $\op{ch}$ the Chern character and $\op{Todd} M$ the Todd class of
$(M,j)$. 
\end{theo}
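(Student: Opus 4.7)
The plan is to reduce the theorem to a dimension computation for the first Landau level of a twisted Bochner Laplacian, and then invoke a suitable generalization of the Borthwick--Uribe formula \eqref{eq:RRH}.

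The first step uses the ladder-operator isomorphism announced as main result (3) of the introduction: for $k$ sufficiently large, certain ladder operators furnish a linear bijection between $\Hilb_{m,k}$ and the first Landau level of the Bochner Laplacian acting on sections of $L^{k} \otimes \D_m(TM)$. These operators are the geometric analogues of the raising/lowering operators of the planar Landau Hamiltonian; they are built inside the operator class whose symbols take values in the bundle of non-commutative Landau algebras described in the introduction, and their bijectivity for large $k$ reduces to invertibility of their principal symbols in each fibre of that algebra bundle. Modulo this result, the theorem becomes the computation of the dimension of the first Landau level of a $\D_m(TM)$-twisted operator.

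The second step is the extension of \eqref{eq:RRH} to twisted bundles: for any auxiliary Hermitian bundle $F$ and for $k$ large, the first Landau level of the Bochner Laplacian on $\Ci(M,L^{k} \otimes F)$ has dimension $\int_{M} \exp(k\om/2\pi)\op{ch}(F)\op{Todd}(M)$. The argument is the one of Borthwick--Uribe \cite{BoUr96}: identify this first level with the kernel of a twisted $\op{Spin}^{c}$-Dirac (or Dolbeault-type) operator, apply Atiyah--Singer to obtain its index as the above Riemann--Roch integral, and use the spectral-gap estimate \eqref{eq:si_0-k_Guillemin_Uribe} (applied to the $F$-twisted Laplacian) to show that for $k$ large this index reduces to the dimension of the kernel alone. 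Specialising to $F = \D_m(TM)$ then yields Theorem \ref{theo:dim_landau}.

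The main obstacle is the first step. Taking $\dim \Hilb_{m,k} = \tr \Pi_{m,k}$ and integrating the diagonal asymptotic expansion of the projector kernel coming from \cite{oim_copain} would only produce an asymptotic expansion in $k^{-1}$, not the exact integer equality asserted here. Constructing the ladder intertwiner, and controlling it precisely enough to obtain a genuine isomorphism rather than a mere morphism of the correct leading order, requires the full symbol calculus on the operator class containing $\Pi_{m,k}$ and a careful analysis of principal symbols in the fibres of the Landau-algebra bundle. Once this symbolic invertibility is in place the intertwiner is Fredholm of index zero for $k$ large, hence an honest isomorphism, and the theorem follows by combination of the two steps.
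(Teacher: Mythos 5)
Your proposal is correct and follows essentially the paper's own route. The paper derives Theorem~\ref{theo:dim_landau} from the general Theorem~\ref{theo:dim_general}, whose proof is precisely the two-step argument you describe: a unitary equivalence inside $\lag$ (Theorem~\ref{theo:unitary_equivalence}, which is the same machinery that underlies the ladder isomorphism of Theorem~\ref{theo:ladder}) takes $\Hilb_{m,k}$ to a first-Landau-level quantization twisted by $F=\D_m(TM)$ and then to the kernel of the $F$-twisted spin-c Dirac operator, and Atiyah--Singer together with a vanishing theorem of Borthwick--Uribe / Ma--Marinescu identifies that dimension with the Riemann--Roch number. Your remark that merely integrating the on-diagonal expansion of $\Pi_{m,k}$ cannot give the exact integer equality, and that one must promote the ladder intertwiner from a leading-order symbol statement to an honest bijection via the symbol calculus on the operator class, is exactly the point the paper's proof of Theorem~\ref{theo:unitary_equivalence} addresses.
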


As far as we know, Theorem \ref{theo:dim_landau} is a new result, except in the cases already
mentionned ($n=1$ with constant curvature or $m=0$). 

\begin{rem} \begin{itemize} 
\item[-]   When $n=1$, $\D_m ( TM)$ is isomorphic with $K^{-m}$, $K$ being the canonical
bundle, and it is easy to see that   we recover the second equation
\eqref{eq:surface_eignvalue_dim}. However, even for $n=1$, Theorem
\ref{theo:dim_landau} goes further since we don't assume
that the Gauss curvature is constant. In this generality, it is not likely
that  $\Si_{m,k}$  consists of a single degenerate eigenvalue, but the
dimension of the $m$-th cluster is given by the same formula.
\item[-]  For a general dimension $n$, $\D_m (TM)$ has rank ${ {m+n-1} \choose
  {n-1} }$ and we recover the asymptotic \eqref{eq:asymptot_Demailly}.
\end{itemize}
\end{rem}


\subsubsection*{Symbol spaces} In the sequel, we will use $\D_m (TM)$ as a bosonic space, with
associated creation an annihilation operators defined as follows. For any $x
\in M$, let us view $ \D_{m} (
T_xM)$ as the space of homogeneous polynomials maps $T_x^{0,1} M  \rightarrow \C$ with
degree $m$. Set
\begin{gather} \label{eq:dtm_intro}
  \D (T_xM) := \bigoplus_{m \in \N} \D_m (T_xM)
\end{gather}
and let $\pi_m(x)$ be
the corresponding projector of $\D (T_x M)$ onto $\D_m (T_x M)$. For any $ Y
\in T_xM \otimes \C$, let $\rho (Y)$ be the endomorphism of $\D (T_xM)$ defined as follows. Write $Y = U + \con V$ with $U, V \in T_x ^{1,0}
M$. Then 
\begin{gather} \label{eq:rho}
\rho ( Y)=
\rho ( U) + \rho ( \con V) \; \text{ with } \;  \begin{cases}  \rho (U) =  \text{
    multiplication by }  i 
\om ( U, \cdot) \\ \rho ( \con {V}) = \text{ derivation with respect to } \con
V  
\end{cases} 
\end{gather}
More concretely, let $(U_i)$ be a basis of $T^{1,0}_xM$ such that $\frac{1}{i} \om ( U_i ,
\con {U}_j) = \delta_{ij}$. Let $(z_i)$ be the basis of $(T^{1,0}_xM)^*$ dual
to $(U_i)$, so $z_i = \frac{1}{i} \om ( \cdot, \con{U}_i)$ and $ \D
(T_x M) = \C [\con{z}_1 , \ldots, \con{z}_n]$. Then for any polynomial
$P$ of the variable $\con{z}_1$, $\ldots$, $\con{z}_n$
\begin{gather}
\rho (
U_i) P = -  \con{z}_i P, \qquad  \rho ( \con{U}_i)  P = \frac{\partial P}{\partial
  \con{z}_i}  .
\end{gather}
So  $-\rho ( U_i)$ and $\rho (\con{U}_i)$  are respectively the creation and
annihilation operators. 

\subsubsection*{Berezin-Toeplitz operators} 


  

Our second result is about Berezin-Toeplitz operators. By \cite{BoUr96}, the
spaces $\Hilb_{0,k}$ can be considered as quantizations of $M$, replacing the
standard K\"ahler quantization $H^0 ( M,L^k)$, for symplectic manifold not
necessarily having an integrable complex structure. An important feature is that there is a natural
way to pass from classical to quantum Hamiltonians, provided by the
Berezin-Toeplitz quantization. In the semi-classical limit, defined here as the
large $k$ limit,  the product and commutator of quantum observables
correspond to the product and Poisson bracket of classical observables, up to
some error terms.
More precisely, let $\Pi_{m,k}$ be the orthogonal projector of $\Ci ( M , L^k)$
onto $\Hilb_{m,k}$ and for any $f \in \Ci (M)$, let $T_{m,k} (f)$ be the endomorphism of
$\Hilb_{m,k}$ defined by 
\begin{gather} \label{eq:toeplitz}
  T_{m,k} (f) \psi  = \Pi_{m,k} ( f \psi)
\qquad \forall \; \psi \in \Hilb_{m,k} .
\end{gather}
For the first Landau level, it is known \cite{BoMeSc}, \cite{oim_op}, \cite{MaMa}
that for any $N$ 
\begin{gather} \label{eq:t_0_k_produit_tout_ordre}
T_{0,k} ( f) T_{0,k} (g) = \sum_{\ell = 0 } ^N  k^{-\ell} T_{0,k} ( B_\ell ( f,g)) +
\bigo (k^{-(N+1)})
\end{gather}
for some bidifferential operators $B_{\ell} : \Ci ( M) \times \Ci ( M)
\rightarrow \Ci ( M)$, where
\begin{gather}  \label{eq:b_0_b_1}
B_0 ( f,g) = fg, \qquad B_1(f,g) = - \tfrac{1}{2} 
g(X,Y) + \tfrac{1}{2i} \om ( X, Y),
\end{gather}
$X$ and $Y$ being the Hamiltonian vector
fields of $f$ and $g$ respectively.

For the generalisation to higher Landau levels, we will use in addition to the
$T_{m,k} ( f)$'s the following operators: let $p \in \N$ and $X_1$, \ldots,
$X_{2p}$ be vector fields of $M$. Define $ T_{m,k} (X_1, \ldots , X_{2p}):
\Hilb_{m,k} \rightarrow \Hilb_{m,k}$ by 
\begin{gather} \label{eq:toep_der} 
T_{m,k} (X_1, \ldots , X_{2p}) (\Psi) = k^{ -p} \Pi_{m,k} ( \nabla^{L^k}_{X_1} \ldots
\nabla_{X_{2p}}^{L^k}  \Psi)   
\end{gather}
Let $\mathcal{T}_m$ be the vector space of families $ (P_k : \Hilb_{m,k} \rightarrow
\Hilb_{m,k}, \; k \in \N)$ spanned by the $(T_{m,k} (f), k \in \N)$'s and $(T_{m,k} ( X_1, \ldots ,
X_{2p}), \; k \in \N) 's$. Here the functions $f$ or vector fields $X_1,
\ldots X_{2p}$ do not depend on $k$.

\cB 
Define the semiclassical completion
$\mathcal{T}_m ^{\op{sc}}$ as the vector space of families $(P_k : \Hilb_{m,k} \rightarrow \Hilb_{m,k} , \; k \in
\N)$ such that for any $N$,
$$ P_k = \sum_{\ell= 0 } ^N k^{-\ell} P_{\ell,k} + \bigo ( k ^{-N-1}) $$
where the coefficients $(P_{\ell, k })_k$, $\ell \in \N$ all belong to
$\mathcal{T}_m$, and the $\bigo $ is for the operator norm. 
This expansion is meaningful because as we will see later, for any $\ell$, the
operator norm $\| P_{\ell, k}\|$ is bounded independently of $k$. 

\clb
\begin{theo} \label{theo:Toeplitz_Landau}
 For any $m \in \N$, we have:
  \begin{enumerate}
  \item  $ \mathcal{T}_m^{\op{sc}}$ is closed under product.
  \item \cA There exists a unique linear
  map $ \mathcal{\tau} : \mathcal{T}_m^{\op{sc}} \rightarrow \Ci ( M ,
  \op{End} ( \D_m (TM))) $  given on the generators \clb
  \eqref{eq:toeplitz} and \eqref{eq:toep_der} by  
  \begin{xalignat}{2} \label{eq:symbolmap}
    \begin{split} 
    \tau ( T_{m,k} ( f) )_x   & = f(x) \op{id}_{\D_m
      (T_x M)}, \\ \tau ( T_{m,k} ( X_1, \ldots , X_{2p}) )_x &  = \pi_m(x) \rho(X_1(x))
      \ldots \rho (X_{2p}(x))   
    \end{split}
  \end{xalignat}
$\tau$ is onto, its kernel consists of $k^{-1} \mathcal{T}_{m}^{\op{sc}}$.
\item   For any $P,Q \in \mathcal{T}_{m}^{\op{sc}}$
  \begin{xalignat}{2} \label{eq:prod}  
    & \tau ( PQ) = \tau ( P) \tau (Q) , \\ \label{eq:norm}
    & \| P_k \| = \sup \{ \| \tau (
    P)_x \| , \; x \in M \} + \op{o} (1),  \\ \label{eq:trace}
    &
 P_k(x,x) = \Bigl( \frac{k}{2\pi} \Bigr)^n ( \op{tr} (\tau (P)_x) + \bigo
 (k^{-1})). 
\end{xalignat}
\item For any $f$, $g$ in $\Ci ( M)$, we have
\begin{gather} \label{eq:produit}
  T_{m,k} (f) T_{m,k} (g) = T_{m,k} ( fg) + k^{-1} T_{m,k} (X,Y)  + \bigo (
  k^{-2}) 
\end{gather}
where $X$, $Y$ are the Hamiltonian vector fields of $f$ and $g$. In
  particular,
\begin{gather} \label{eq:com_poisson} 
  ik [ T_{m,k} (f), T_{m,k} (g) ]
  = T_{m,k} ( \{ f,g \}) + \bigo ( k^{-1})
\end{gather}
with $\{ \cdot, \cdot \}$ the Poisson Bracket of $(M, \om)$. 
\end{enumerate}
\end{theo}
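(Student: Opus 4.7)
The strategy is to deduce all four assertions from the asymptotic expansion of the Schwartz kernel of $\Pi_{m,k}$ near the diagonal, which is the main technical input imported from \cite{oim_copain}. I would first enlarge $\mathcal{T}_m$ to a class $\mathcal{A}_m$ of operators $(P_k)$ on $\Ci(M,L^k)$ whose kernel admits, in suitable normal coordinates centred at any $x_0 \in M$, an expansion of the form
\[
P_k(x,y) \sim \Bigl( \tfrac{k}{2\pi} \Bigr)^n E_k(x,y) \sum_{\ell \geqslant 0} k^{-\ell/2} a_\ell\bigl(x_0, \sqrt{k}(x - y)\bigr),
\]
where $E_k$ is the universal Gaussian-type profile of $\Pi_{m,k}$ and each $a_\ell$ is a polynomial in its second argument valued in $\op{End}\, \D(T_{x_0}M)$. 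By the assumed expansion, $\Pi_{m,k}$ lies in $\mathcal{A}_m$ with leading coefficient $\pi_m(x_0)$. Multiplication by $f \in \Ci(M)$ preserves the class by Taylor-expansion of $f$ at $x_0$, while each covariant derivative $\nabla^{L^k}_X$ hitting $\Pi_{m,k}$ produces a factor of order $\sqrt{k}$ through $dE_k$, so the prefactor $k^{-p}$ in \eqref{eq:toep_der} exactly compensates $2p$ derivatives. Reading the leading amplitude on the diagonal identifies $\rho(X(x_0))$ of \eqref{eq:rho} with the symbolic action of $k^{-1/2}\nabla^{L^k}_X$, yielding \eqref{eq:symbolmap}.

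The second step is a stationary-phase computation showing that $\mathcal{A}_m$ is closed under composition and that $\tau$ is multiplicative. The composition of two kernels in $\mathcal{A}_m$ carries the phase $E_k(x,y)E_k(y,z)$, whose unique non-degenerate critical point in $y$ sits on the diagonal, and stationary phase returns a kernel of the same type with leading amplitude the pointwise product. This delivers part (1) and the multiplicativity \eqref{eq:prod}. Surjectivity of $\tau$ onto $\Ci(M, \op{End}\, \D_m(TM))$ follows because the symbols in \eqref{eq:symbolmap} span $\op{End}\, \D_m(T_xM)$ pointwise: the monomials in the creation operators $-\rho(U_i)$ and annihilation operators $\rho(\con{U}_j)$ generate the whole endomorphism algebra of the Fock level. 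The identification $\ker \tau = k^{-1}\mathcal{T}_m^{\op{sc}}$ is then routine: if $\tau(P)=0$ the leading coefficient $a_0$ vanishes, so $P_k = k^{-1}Q_k$ with $Q \in \mathcal{A}_m$, and iterating yields the full semiclassical series. The formulas \eqref{eq:norm} and \eqref{eq:trace} follow from a Schur-test argument on the Gaussian kernel and a direct reading of $a_0$ on the diagonal.

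For part (4), I would Taylor-expand $f$ and $g$ to second order inside the integral defining $T_{m,k}(f)T_{m,k}(g)$ and regroup contributions by powers of $k^{-1/2}$. The zeroth-order part yields $T_{m,k}(fg)$. The first-derivative terms are odd in $\sqrt{k}(x-y)$ and $\sqrt{k}(y-z)$, so each vanishes alone against the Gaussian, but they pair across the two factors to produce an order $k^{-1}$ bilinear contribution in $df$ and $dg$ whose symbol on the diagonal equals $\pi_m(x)\, \rho(X(x))\rho(Y(x))\, \pi_m(x)$, the symbol of $T_{m,k}(X,Y)$. The commutator formula \eqref{eq:com_poisson} then follows by antisymmetrisation and the ladder identity
\[
[\rho(X),\rho(Y)] = \tfrac{1}{i}\, \om(X,Y)\, \op{id} \qquad \text{for real vector fields } X,Y,
\]
which is a direct consequence of \eqref{eq:rho}, the normalisation $\tfrac{1}{i}\om(U_i, \con{U}_j) = \delta_{ij}$, and the vanishing $\om(U,V) = 0$ whenever $U,V$ both lie in $T^{1,0}M$.

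The main obstacle is threefold: setting up $\mathcal{A}_m$ so that it is simultaneously stable under composition, rich enough to contain every generator of $\mathcal{T}_m$, and equipped with uniform $\bigo(k^{-\ell})$ remainder estimates; carrying the stationary-phase argument without losing track of the non-commutative symbol bundle $\op{End}\, \D_m(TM)$; and matching the subprincipal expansion in (4) with the bosonic calculus, where the cancellation of the symmetric piece $g(X,Y)$ upon antisymmetrisation is precisely what isolates the Poisson bracket. This last step is the higher-$m$ analogue of the familiar subprincipal calculus for the first Landau level.
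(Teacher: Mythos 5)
Your strategy closely mirrors the paper's: introduce an operator class defined by a rescaled Gaussian--polynomial kernel expansion (the paper's $\lag(A,B)$), verify that the Landau projector lies in it with leading coefficient $\pi_m$, show multiplication by $f$ and the normalized covariant derivatives preserve the class and act on symbols by \eqref{eq:symbolmap}, and deduce the Toeplitz calculus. The ladder identity $[\rho(X),\rho(Y)]=\tfrac{1}{i}\om(X,Y)\op{id}$ is correct and the derivation of \eqref{eq:com_poisson} from \eqref{eq:produit} via $\pi_m\rho(X)\pi_m=0$ is sound. There are two points of substance, though, one of which is a genuine gap.

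The gap is the step ``$\tau(P)=0$ so $a_0$ vanishes, so $P_k=k^{-1}Q_k$.'' Your class $\mathcal{A}_m$ has an expansion in half-powers $k^{-\ell/2}$, so the vanishing of $a_0$ only gives $P_k=k^{-1/2}Q_k$ with $Q\in\mathcal{A}_m$, one notch short of the claimed $k^{-1}\mathcal{T}_m^{\op{sc}}$. The paper closes this half-power gap with a parity argument: the class is split into even and odd parts (every $a_\ell(x,\cdot)$ has the same or opposite parity as $\ell$ as a polynomial), the composition respects this grading, and the generators of $\mathcal{T}_m^{\op{sc}}$ are all even. The crucial observation is that if $\sigma_{2q}(P)=0$ then the next symbol $\sigma_{2q+1}(P)$ is odd, but $\Pi P\Pi=P$ forces $\sigma_{2q+1}(P)=\pi_m\,\sigma_{2q+1}(P)\,\pi_m$, which is automatically even because $\D_m(T_xM)$ sits entirely in one parity summand of $\D(T_xM)$. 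Hence $\sigma_{2q+1}(P)=0$ and the next nontrivial contribution is at order $k^{-(q+1)}$. Without this argument, the assertion that $\ker\tau=k^{-1}\mathcal{T}_m^{\op{sc}}$ (and, by the same token, that $\mathcal{T}_m^{\op{sc}}$ is closed under the expansion hypothesis with \emph{integer} rather than half-integer powers) is not established. You would need to build the even/odd structure into $\mathcal{A}_m$ from the start and track it through the stationary-phase composition.

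For part (4) you propose a direct second-order Taylor expansion of $f,g$ inside the composed kernel, regrouping and pairing the odd first-derivative terms. This is a genuinely different route from the paper's, which uses the exact algebraic identity $\Pi f\Pi g\Pi=\Pi fg\Pi+\Pi[f,\Pi][g,\Pi]\Pi$ together with a Kostant--Souriau-type lemma: $[f,\Pi]\equiv(ik)^{-1}[\nabla_X^{L^k},\Pi]$ modulo two orders in the filtration, so the correction operator is identified with $k^{-1}\Pi\nabla_X\nabla_Y\Pi$ without ever expanding $f$ or $g$. Your approach can in principle be carried through, but you would have to account not only for the cross term $df\otimes dg$ but also for the pure second-derivative contributions $d^2f$ and $d^2g$ and the analogous terms inside the Taylor expansion of the kernel of $T_{m,k}(fg)$, and check that all of these reassemble exactly. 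The paper's reduction to the commutator $\Pi[f,\Pi][g,\Pi]\Pi$ sidesteps this bookkeeping, and the observation $\pi_m\rho(X)\pi_m=0$ immediately collapses the commutator symbol to $\pi_m\rho(X)\rho(Y)\pi_m$. You should either supply the missing bookkeeping or adopt the algebraic identity; as written the argument is plausible but incomplete.
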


We call $\tau$ the symbol map. The symbol of the generators \eqref{eq:symbolmap} is defined in
terms of the endormorphisms \eqref{eq:rho}. The product of symbols in the right-hand
side of \eqref{eq:prod} is the pointwise composition. In the norm estimate
\eqref{eq:norm}, the norm of $\tau   ( P)_x$ is defined in terms of the
hermitian structure of $\D_m (T_xM)$.  In \eqref{eq:trace}, $P_k
(x,x)$ is the value of the Schwartz kernel of $P_k$ at $(x,x)$. Integrating
\eqref{eq:trace}, we obtain the following estimate of the trace of $P_k$,
\begin{gather}
\op{tr}
P_k  = \Bigl( \frac{k}{2\pi} \Bigr)^n   \int_M \op{tr} (\tau (P)_x) \mu_M(x)
+ \bigo ( k^{n-1})  
\end{gather}
where $\mu_M = \om^n /n!$.
Since $P_k =T_{m,k} (1)$ is the identity
of $\Hilb_{m,k}$, we recover the estimate \eqref{eq:asymptot_Demailly}. 

\begin{rem} $ $ \label{rem:Toeplitz}
\begin{enumerate}
  \item  In the surface case, $n=1$, $\D_m(TM)$ is a line bundle, so any
    endomorphism of $\D_m(T_xM)$ is scalar and by Assertion 2  of
Theorem \ref{theo:Toeplitz_Landau}, $\mathcal{T}_{m}^{\op{sc}}$ consists of the
families
\begin{gather} \label{eq:-p_k-=}
( P_k = T_{m,k} ( f( \cdot, k)) + \bigo ( k^{-\infty}) , \; k \in \N)
\end{gather}
where the
multiplicator $f( \cdot, k)$ depends on $k$ in such a way that it admits an expansion $f( \cdot, k ) = f_0
+ k^{-1} f_1 + \ldots$.  
\cB It holds as well that the $T_{m,k} (f)$ satisfy
\eqref{eq:t_0_k_produit_tout_ordre} for some bidifferential operators
$B_\ell^m$ depending on $m$. Indeed, by \cite[Section 5.4]{oim}, this property is equivalent to the
locality of the product: for any two functions $f$, $g$ with disjoint
supports, $T_{m,k} (f) T_{m,k} ( g) = \bigo (k^{-\infty})$. This latter
property follows from the fact that the Schwartz kernel of $\Pi_{m,k}$ is in
$\bigo ( k^{-\infty})$ outside the diagonal. \clr

\eqref{eq:produit} writes in this case
\begin{gather} \label{eq:bm_1-f-g}
B^m_1 (f,g) = - ( \tfrac{1}{2} +m ) g(X,Y) + \tfrac{1}{2i} \om (X,Y)
\end{gather}
where $X,Y$ are the Hamiltonian vector fields of $f$ and $g$.

\item  \cA
  If $n \geqslant 2$ and $m\geqslant 1$, the Toeplitz algebra
  $\mathcal{T}_m^{\op{sc}}$ is strictly larger than the subspace $
  \mathcal{T}_m^{\op{f}}$ consisting of the Toeplitz operators
  \eqref{eq:-p_k-=} with scalar multiplicators, because the
  symbol map $\tau $ is onto. Nevertheless, we may ask if  $
  \mathcal{T}_m^{\op{f}}$ is closed under product. This is not the case.
  Indeed, if follows from \eqref{eq:produit}, that if $f \in \Ci (M)$ is not
  locally constant, then there exists no function $h$ such that $T_{m,k} (f)^2 - T_{m,k} (f^2) = k^{-1} T_{m,k} (h) +
  \bigo ( k^{-2})$. So interestingly, by developping the theory of
  Berezin-Toeplitz operator for higher Landau level, we are naturally led to
  use matrix valued symbols. 
\item The Toeplitz operators defined as in \eqref{eq:toep_der}, but with an odd
  number of vector fields, can be incorporated in the theory. They are odd
  Toeplitz operators, cf. Remark \ref{rem:odd_toeplitz}, and they belong to
  $k^{-\frac{1}{2} }  \mathcal{T}_m^{\op{sc}}$. We purposely have avoided any
  square root of $k$ in Theorem \ref{theo:Toeplitz_Landau}.   
\end{enumerate}  \clb
\end{rem}
\begin{rem} $ $ \label{sec:comp_Yuri}
  \begin{enumerate} 
\item  The main estimates for the Toeplitz operators associated to functions, that is $T_{m,k} (f) T_{m,k} (g) = T_{m,k} ( fg) + \bigo ( k^{-1})$ and
 $ik [T_{m,k} (f), T_{m,k} (g) ]= T_{m,k} ( \{ f, g \}) + \bigo ( k^{-1})$
 have been proved independently by Kordyukov \cite{Yuri_2} by using the
 techniques of \cite{MaMa}.
\item \cA
  In \cite{Yuri_2}, the non degenerate magnetic fields $j_B$ with
 constant eigenvalues $B_1$, \ldots, $B_n$ are considered as well. The
 corresponding magnetic Laplacian has clusters centered at the points of
 $k\Si$ where 
 $\Si = \{ \sum B_i ( \frac{1}{2} + \al (i) ) / \al \in \N^n \}$. 
 In the companion paper \cite{oim_copain}, we prove that the number of eigenvalues in
 the cluster at $\La \in \Si$ is given by a Riemann-Roch number as in Theorem
 \ref{theo:dim_landau} where $\mathcal{D}_m (TM)$ is replaced
 by a bundle $F_{\La}$ \footnote{$F_{\Lambda}$ is defined as follows: for any $p \in M$, introduce a
 basis $( \partial_j, \con{\partial}_j)$ of $T_pM \otimes \C$ such that $j_B
 \partial_j = i B_j \partial_j$ and denote by $(z_j, \con{z}_j)$ the dual
 basis of $T^*_pM \otimes \C$.
 Then the fiber of $F_{\Lambda}$ at $p$ is spanned by the $\con{z}_1^{\al(1)}
 \ldots \con{z}_n^{\al (n)}$ where $\al \in \mathcal{K}_{\Lambda}$.  }
with rank the cardinal of
 $$\mathcal{K}_{\La} = \{ \al \in \N^n, \sum B_i
 ( \tfrac{1}{2} + \al (i) ) = \Lambda \}. $$ 
When $|\mathcal{K}_{\Lambda} | =1$, it is proved in \cite{Yuri_2} that the
space of Toeplitz operators in the $\Lambda$-cluster associated to functions of $M$ is an algebra.
This particular case is very similar to the first Landau level ($m=0$) because the endomorphisms of $K_{\Lambda}$ are scalar, the symbol
composition law is commutative and the Toeplitz  operator \eqref{eq:toep_der}
associated to vector fields are not necessary. Observe as well that in dimension $n \geqslant 2$, the condition that
$|\mathcal{K}_{\Lambda}| =1$ for $\Lambda \neq \frac{1}{2} \sum B_i$ implies
that the $B_i$ are not all equal, so that the tangent bundle of $M$, endowed with
 the unique up to isotopy complex structure compatible with the symplectic
 structure, splits into a non trivial sum of complex subbundles. This seems
 rather restrictive and is not satisfied for instance  by the complex projective spaces
 $\mathbb{P}^n$ when $n$ is even, cf.  \cite{split}.

\item 
We can study as well with our techniques the Toeplitz operators associated to
the $\Lambda$-cluster and this will be partly done in Section
\ref{sec:projectors-Toeplitz} where we consider a vector bundle $F$
generalising $\mathcal{D}_m (TM)$ or $F_{\Lambda}$. Assuming that the numbers
$|\al| = \al (1) + \ldots + \al (n)$ have all the same parity when $\al$ runs
over $\mathcal{K}_{\La}$, we have the same result as Theorem
\ref{theo:Toeplitz_Landau}. When this parity condition is not satisfied,
the square roots of $k$ seem to be unavoidable. 

We have chosen to work out the case where all the $B_i$ are equal because
first any symplectic manifold can be endowed with such a magnetic field, which is merely
a compatible almost complex structure, and second the higher
rank vector
bundle $\mathcal{D}_m (TM)$ makes it rather different from the already much
studied case of first
Landau levels.
\end{enumerate} \clb
\end{rem}

\subsubsection*{Ladder operators} 

The last result we would like to emphasize in this introduction is the construction of some ladder
operators for the spaces $\Hilb_{m,k}$. In the surface case with
constant Gauss curvature, $\Hilb_{m,k}$ is naturally isomorphic with the
space of holomorphic sections of $L^k \otimes K^{-m}$ where $K$ is the
canonical bundle \cite{Te06}, the isomorphism being
the {\it ladder} operator  $\con{\partial}_{L^k \otimes K^{-m+1}} \circ \ldots
\circ \con{\partial}_{L^k}$, cf. the appendix \ref{sec:appendix}. 
Here we will show that the family $(\Hilb_{m,k}, \; k \in \N)$ is isomorphic to a quantization of $M$
twisted by the vector bundle $\D_m (TM) $.

Recall that for any Hermitian vector
bundle $F \rightarrow M $, we can define a family of
finite dimensional subspaces $\Hilb_{F,k} \subset \Ci ( M , L^k \otimes F)$, $k
\in \N$, having the following properties:
\begin{enumerate} 
\item $\op{dim} \Hilb_{F,k} = \int_M \op{ch} ( L^k \otimes F) \; \op{Todd} M$,
  when
  $k$ is sufficiently large.
\item the space $\mathcal{T}_F^{\op{sc}}$, consisting of families of $(T_k \in \op{End}
  (\Hilb_{F,k}) , \; k \in \N)$ having an expansion of the form 
$$  T_k = \sum_{\ell = 0 }^N k^{-\ell} T_{F,k} ( f_{\ell} ) + \bigo ( k^{-N+1}) , \qquad
\forall N \in \N $$
for a sequence $(f_\ell)$ of $\Ci ( M , \op{End} F)$, is closed under product.
Here,  $T_{F,k} (f_{\ell}) (\psi) = \Pi_{F,k} ( f_{\ell} \psi)$ for any $\psi
\in \Hilb_{F,k} $ where $\Pi_{F,k}$ is the orthogonal projector of $\Ci ( M, L^k
\otimes F)$ onto $\Hilb_{F,k}$. 
\item At first
order, the product is given by the pointwise product, that is  $ T_{F,k} ( f)
T_{F,k} (g) = T_{F,k} ( fg) + \bigo (k^{-1})$ for any $f,g \in \Ci (M,
\op{End} F)$. 

\end{enumerate} 
The algebra $\mathcal{T}_F^{\op{sc}}$ is called the Toeplitz algebra. In the K\"ahler
case, that is when $(M, \om)$ is
K\"ahler, $L$ holomorphic with $\nabla$ the Chern connection, and $F$
holomorphic as well, the space $\Hilb_{F,k}$ can be defined as the space  $
H^0 (L^k \otimes F)$ of
holomorphic sections. In the non-K\"ahler case, various
constructions have been developed \cite{BoUr96},
\cite{MaMa}: Spin-c quantization, first Landau level of a Laplacian acting on $\Ci ( M , L^k
\otimes F)$, or more generally any image of a projector of $\Ci ( M , L^k
\otimes F)$ having a specific Schwartz kernel \cite{oim}. Let us call such a
family $(\Hilb_{F,k}, \; k \in \N)$ a {\em quantization of $(M,L)$ twisted by
$F$}.  These twisted quantizations have sometimes better properties than the
non twisted one (corresponding to $F=\C$),
typically when $F$ is a half-form bundle \cite{oim_eq}. The general case where the rank of
$F$ is $\geqslant 2$ may be viewed as a free generalization
without any application,
but interestingly, this is exactly what we need. 

Assume $F$ is equipped with a connection $\nabla^F : \Ci ( M , F) \rightarrow
\Om^1 (M, F)$. Let $G = (T^{0,1}M)^*$ and $D_{F,k} : \Ci ( M , L^k \otimes F) \rightarrow \Ci ( M , L^k
\otimes F \otimes G)  $ be the $(0,1)$-part of the connection $\nabla^{F \otimes
  L^k}$ induced by $\nabla^F$ and $\nabla^{L^k}$. Endow $G$
with a connection and define  the differential operators
\begin{gather} \label{eq:def_w_k}
\begin{split}
  W_k :  \Ci ( M , L^k ) \rightarrow \Ci (
  M, L^k \otimes \D_m (TM) )\\
  W_k = 
  R_m D_{G^{\otimes (m-1)},k} \circ  D_{G^{\otimes
    (m-2)},k} \circ \ldots \circ D_{G, k} \circ D_{\C,k}
\end{split}
\end{gather}
where $R_m$ is the projection from $G^{\otimes m }$ onto $\D_m (TM) =
\op{Sym}^m G$. 

\begin{theo} \label{theo:ladder} For any quantization $( \Hilb_{F,k}, \; k \in \N)$ of $(M,L)$
  twisted by $F= \D_m ( TM)$, the linear maps
  $$V_k = \tfrac{1}{m!} k^{-\frac{m}{2}} \Pi_{F,k} W_k
  : \Hilb_{m,k} \rightarrow \Hilb_{F,k}, \qquad k \in \N $$
  satisfy: 
\begin{enumerate}
\item $V_kV_k^* = \op{id}_{\Hilb_{F,k}} + \bigo ( k^{-1})$ and $V_k^* V_k
  =\op{id}_{\Hilb_{m,k}} + \bigo ( k^{-1})$. In particular, $V_k$ is an isomorphism when $k$ is
  sufficiently large. 
    \item the conjugation by $V=(V_k)$ is an isomorphism between the Toeplitz algebra $ \mathcal{T}^{\op{sc}}_m$ and
      $\mathcal{T}_F^{\op{sc}}$ modulo $\bigo ( k^{-\infty})$. 
      In particular, for any $(P_k)  \in
      \mathcal{T}^{\op{sc}}_m$, $(V_k P_k  V_k^*)_k$ belongs to
      $\mathcal{T}_F^{\op{sc}}$ and if $f \in \Ci ( M , \op{End} F)$ is the
      symbol $ \tau(P_k)$, then $V_k P_k  V_k^* = T_{F,k} (f) + \bigo (
      k^{-1})$. 
    \end{enumerate}
  \end{theo}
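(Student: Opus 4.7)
The plan is to reduce both statements to symbol computations in a unified Toeplitz calculus, then to check that the relevant symbols give the identity operator at leading order.

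First I would extend the symbol framework of Theorem \ref{theo:Toeplitz_Landau} to cover intertwining operators between $\Hilb_{m,k}$ and $\Hilb_{F,k}$. The maps $V_k$ and $V_k^*$ are not endomorphisms of a single space but morphisms between the two, so the right object is a calculus of intertwiners whose symbols are sections of $\op{Hom}(\D_m(TM), F)$. Built on the Schwartz-kernel asymptotics of $\Pi_{m,k}$ and $\Pi_{F,k}$ from \cite{oim_copain}, this extended calculus should have the same structural features as Theorem \ref{theo:Toeplitz_Landau}: symbols multiply pointwise under composition, and operator norms control symbol norms modulo $\bigo(k^{-1})$, just as in \eqref{eq:prod} and \eqref{eq:norm}.

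Second, I would compute the symbol of $V_k$. Since $W_k$ is a composition of $m$ operators of the form $D_{E,k}$, each a covariant $(0,1)$-derivative, and each such operator rescaled by $k^{-1/2}$ and conjugated by Landau-level projectors corresponds at the symbol level to an annihilation operator $\rho(\con U_i)$ in a unitary frame, one finds that $\tau(V_k)(x)$ is, for each $x \in M$, the natural map sending a polynomial $P \in \D_m(T_xM) = \C[\con z_1,\ldots,\con z_n]_m$ to the symmetric tensor of its coefficients in $F_x = \op{Sym}^m (T^{0,1}_xM)^*$. The combinatorial factor $m!$ that appears from the ordering of the $m$ derivatives and from the final symmetrization $R_m$ is exactly cancelled by the $\frac{1}{m!}$ in the definition of $V_k$, so $\tau(V_k)(x)$ is the canonical unitary identification. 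Its adjoint $\tau(V_k^*)(x)$ satisfies $\tau(V_k)^*\tau(V_k) = \op{id}_{\D_m(T_xM)}$ and $\tau(V_k)\tau(V_k)^* = \op{id}_{F_x}$. By the symbol--norm inequality, the operator estimates in part (1) follow immediately.

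Part (2) is then essentially automatic. For $P \in \mathcal{T}_m^{\op{sc}}$, multiplicativity of the extended symbol gives $\tau(V_kP_kV_k^*) = \tau(V_k)\,\tau(P_k)\,\tau(V_k)^* = \tau(P_k)$ under the canonical identification, and since $T_{F,k}(\tau(P))$ has the same symbol, one gets $V_kP_kV_k^* = T_{F,k}(\tau(P)) + \bigo(k^{-1})$. Iterating on the $\bigo(k^{-1})$ remainder yields a full semiclassical expansion, so $V_kP_kV_k^* \in \mathcal{T}_F^{\op{sc}}$; surjectivity and compatibility with products come from the multiplicativity of symbols on both sides. The main obstacle is the extension of the symbol calculus to intertwiners, and the explicit combinatorial check that the normalization $\frac{1}{m!}k^{-m/2}$ together with the symmetrizer $R_m$ produces the canonical isometric identification rather than a multiple of it; both steps rely on the precise asymptotics of the Landau projectors established in \cite{oim_copain}.
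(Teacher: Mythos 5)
Your proposal matches the paper's proof in both strategy and substance: the paper's argument also consists of computing $\si_0(V)$ (via Proposition \ref{prop:versionlinfty} applied to each $D_{G^{\otimes j},k}$, yielding the annihilation--creation factor $\varphi_A$, whose $m$-fold composition produces the $m!$ cancelled by the normalization) and observing that this symbol is precisely the canonical partial isometry $\rho$ of Lemma \ref{lem:unitary-symbole}, after which the conclusions follow from Theorem \ref{theo:unitary_equivalence}. The one technical ingredient you gloss over is parity: $V$ has parity $(-1)^m$, so $V^*V - \Pi$ and $VV^*-\Pi'$ lie in $\lag^+\cap\lag_1$; it is the definite parity of $F$ together with the constraint $\Pi(\cdot)\Pi$ that forces these into $\lag_2$ and thereby upgrades the naive $\bigo(k^{-1/2})$ to the stated $\bigo(k^{-1})$ — an argument implicit in your appeal to the Toeplitz norm estimate but worth making explicit.
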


  The first assertion of Theorem \ref{theo:ladder} tells us that $V_k$ is almost
  unitary. This can be improved by setting $U_k := A_k V_k$ with $A_k$ the
  endomorphism of $\Hilb_{F,k}$ equal to $(V_kV_k^*)^{-1/2} |_{\Hilb_{m,k}}$
  when $k$ is sufficiently large and to $0$ for the first values of $k$. 
  Then $U_k U_k^* = \op{id}_{\Hilb_{F,k}}$ and $U_k^* U_k =
  \op{id}_{\Hilb_{m,k}}$ when $k$ is sufficiently large. Furthermore the
  second assertion of
  \ref{theo:ladder} holds with $(U_k)$ instead of $(V_k)$.

\cB  The inspiration for \eqref{eq:def_w_k} comes from the case of surface ($n
  =1$) with constant Gauss curvature. In this case, choosing for $D_{F,k}$ the
  $
\con\partial$ operator, it holds that $W_k (
  \Hilb_{m,k} ) \subset  \Hilb_{F,k}$, cf. the discussion after Theorem
  \ref{theo:appendix}, so the projector $\Pi_{F,k}$ in the definition of $V_k$
  is not necessary. We expect that something similar happens in higher
  dimension under the convenient assumptions.  \clb

\subsection{Generalised Landau level and Schwartz kernel expansion} 

\subsubsection*{Generalised Landau level}

In the previous results, the $m$-th Landau level $\Hilb_{m,k}$, $k\in \N$ can be replaced by
any family $(\Hilb_{m,k} \subset \Ci ( M , L^k ) , \; k \in \N ) $ of finite
dimensional subspaces, such that
the Schwartz kernel of the orthogonal projector $\Pi_{m,k}$ of $\Ci ( M, L^k)$
onto $\Hilb_{m,k}$ has a specific behavior in the large $k$ limit. We require
first that 
\begin{gather} \label{eq:first_order}
\Pi_{m,k} (x,y) = \Bigl( \frac{k}{ 2\pi} \Bigr)^n E^k (x,y) Q^{(n-1)}_m ( k
\delta (x,y) ) + \bigo (k^{n-1})
\end{gather}
where
\begin{itemize}
  \item[-]  $E$ is a section of $L \boxtimes \con{L}$ such that $|E(x,y)| < 1$
    when $x \neq y$, and its second order Taylor expansion along the diagonal has a
specific form, cf. Equation \eqref{eq:hypotheseE}. \cB In particular,  in a coordinate chart at $x$,  $\ln |E(x+
\xi, x)|= - \frac{1}{4} |\xi|_x^2 + \bigo ( |\xi|^3)$ where $|\cdot |_x$ is
the norm of $T_xM$. \clb  \footnote{\cB In the whole paper, when working in a
  coordinate chart $(U, \chi)$ of $M$, we write $x+ \xi$ for $ \chi^{-1} (
  \chi (x) + T_x \chi (\xi) )$ where $x \in U$ and $\xi \in T_xM$ sufficiently
  close to the origin. \clb}  
\item[-] 
$\delta \in \Ci (M^2)$ is any function vanishing to second order along the diagonal
and satisfying $\delta (x+ \xi , x) =  |\xi|_x^2 + \bigo (|\xi|^3)$, $\xi \in T_xM$.
$Q_m^{(p)}  $ is the generalised Laguerre polynomial $Q^{(p)}_m (x) = \frac{x^{-p}}{m!} \bigl (
\frac{d}{dx} -1 \bigr) ^m  x ^{m+p}$.  
\end{itemize}
\cA The Schwartz kernel of the projector onto the $m$-th Landau level of the magnetic
Laplacian of $\C^n$ is given by an expression similar to \eqref{eq:first_order},
with the convenient section $E$ and the same Laguerre polynomials,
cf. \eqref{eq:noyau_niveau_m} and \eqref{eq:2pin-e_cn-u}. \clb

In addition to \eqref{eq:first_order}, we require a full expansion of the form
\begin{gather} \label{eq:full}
\Pi_{m,k} (x,y) =   \Bigl( \frac{k}{ 2\pi} \Bigr)^n E^k (x,y) \sum
_{\ell \in \Z}  k^{-\ell} a_{\ell} (x,y) + \bigo (k^{-\infty})
\end{gather}
with coefficients $a_{\ell} \in \Ci ( M^2)$ such that for $\ell<0$, $a_{\ell}$
vanishes to order $m(\ell) \geqslant -2 \ell$ along the diagonal and $m(\ell) + 2\ell \rightarrow
\infty $ as $\ell  \rightarrow - \infty$. The meaning of this expansion is
not obvious because the negative $\ell$'s give positive powers of $k$.
Actually, the condition satisfied by $|E|$ implies that $|E^k (x,y) b(x,y) |
= \bigo ( k^{-m/2})$ when $b$ vanishes to order $m$ along the diagonal.
So the $\ell$-th summand in \eqref{eq:full} is in $\bigo ( k^{n - \frac{1}{2} ( m ( \ell)
  + 2\ell)})$, and the expansion is meaningful because of the conditions
satisfied by $m (\ell)$.

We will prove that for any family $(\Hilb_{m,k})$ whose associated projector
$\Pi_{m,k}$ satisfies  \eqref{eq:first_order} and \eqref{eq:full}, Theorems
\ref{theo:dim_landau},  \ref{theo:Toeplitz_Landau} and \ref{theo:ladder} hold.
On the other hand, in the second part of this work \cite{oim_copain}, cf. also
\cite{Yuri_1}, it is proved that the Schwartz kernel of the orthogonal projector $\Pi_{m,k}$ onto the Landau levels
    $\Hilb_{m,k}$ defined in \eqref{eq:def_Landau} from the Laplacian $\Delta_k$, satisfies
    \eqref{eq:first_order} and \eqref{eq:full}.
The assumption that the magnetic field is constant
with respect to the metric can be relaxed. It is actually possible to define some Landau levels and describe the asymptotic expansion of
the associated projector as soon as a particular gap condition is satisfied, cf \cite{oim_copain}.

\subsubsection*{The class $\lag (A,B)$}

To establish our results, we will introduce a specific class of operators,
containing the projector $\Pi_{m,k}$, the Berezin Toeplitz operators $T_{m,k}
(f)$ and $T_{m,k} ( X_1, \ldots, X_{2p})$ and also the projector $\Pi_{F,k}$ of any twisted quantization, the
corresponding Toeplitz operators $T_{F,k} (g)$,   the isomorphisms
$V_k$ of Theorem \ref{theo:ladder} and their unitarizations $(U_k)$. 
This operator class has a natural filtration, with associated symbol spaces,
which allows to prove most of the results by successive approximations as often
in microlocal analysis. Interestingly, in the symbolic calculus appear the
eigenprojectors of \cB the Landau Laplacian of $\C^n$, providing another link between the
usual Landau levels and our geometric Landau levels. \clb

Introduce two auxiliary Hermitian vector bundles $A$, $B$ over $ M$. Then
$\lag (A,B)$ consists of families   $(P_k : \Ci( M , L^k \otimes A ) \rightarrow \Ci(M
, L^k \otimes B), \; k \in \N)$ of operators having a smooth Schwartz kernel satisfying
\begin{gather} \label{eq:exp_lag}
P_k (x,y) = \Bigl( \frac{k}{2\pi} \Bigr)^n E^{k}(x,y) \sum_{\ell \in \Z }
k^{-\frac{\ell}{2}} b_{\ell}(x,y) + \bigo ( k^{- \infty})
\end{gather}
where $E$ is defined as in \eqref{eq:first_order}; the coefficients $b_{\ell} $ are in $\Ci
( M^2, B \boxtimes \con{A})$; for $\ell <0$, $b_{\ell}$  vanishes to order $m (\ell)
\geqslant -\ell $ along the diagonal; $m( \ell ) + \ell \rightarrow \infty $ as $\ell
\rightarrow - \infty$, and  the meaning of this expansion is the same as in \eqref{eq:full}.
We have a decomposition into even/odd elements: 
$(P_k) \in \lag^{+} (A,B)$ (resp. $\lag^{-} (A,B)$) if the expansion \eqref{eq:exp_lag} holds with a sum over the $\ell$'s
even (resp. odd).

The main property is  that this class of operators is
closed under composition:
\begin{gather} \label{eq:comp}
  \lag^{\ep'} ( B, C) \cdot \lag^{\ep} (A,B) \subset \lag^{\ep \ep'} (A,C),
  \qquad \ep , \ep' \in \{ \pm 1\}.
\end{gather}
In particular, $\lag^+ (A) := \lag^{+} (A,A)$ is an algebra. 
We also have a filtration
$ \lag_q(A,B) := \lag (A,B) \cap
\bigo  (k^{-q/2})$, $q \in \N$ and the corresponding graduation is described
by symbol maps $\si_q$
$$ 0 \rightarrow  \lag_{q+1} (A,B) \rightarrow \lag_q (A,B) \xrightarrow{\si_q} \Ci (M
, \symb (M) \otimes \op{Hom} (A,B) ) \rightarrow 0 $$
Here, $\symb (M)$ is an infinite rank vector bundle over $M$, each fiber
$\symb_x(M)$  is a subalgebra of the algebra of  endomorphisms of the space
$\D (T_xM)$ defined in \eqref{eq:dtm_intro}. This is compatible with the composition \eqref{eq:comp} in
the sense that $\lag_p (B,C) \cdot \lag_q
(A,B) \subset \lag_{p+q} (A,C)$ and the corresponding product of symbols is the
pointwise product of $\symb(M)$ tensored by $\op{Hom} (B,C) \otimes \op{Hom}
(A,B) \rightarrow \op{Hom} (A,C)$.

The projector $(\Pi_{m,k})_k$ is an idempotent of $\lag^+ (\C)$ with symbol
$\si_0 ( \Pi_{m} )$ equal at $x$ to the projector $\pi_m (x)$ onto the $m$-th
summand in \eqref{eq:dtm_intro}. The Toeplitz algebra introduced previously is
\begin{gather} \label{eq:toep_land}
\mathcal{T}_m^{\op{sc}} = \{ P
\in \lag^{+} (\C) / \; \Pi_m P \Pi_m = P \}.
\end{gather}
The isomorphism $V$ of
Theorem \ref{theo:ladder} belongs to $\lag ( \C, F)$ and has the same parity
as $m$. 

Interestingly,
$\symb_x(M)$ has a representation as operators of $L^2 (\C^n)$, and in
this  representation, $ \pi_m(x) = \si_0 ( \Pi_m)(x)$ is the projector onto the $m$-th Landau
level of a magnetic Laplacian of $\C^n$.

\subsubsection*{Outline of the paper} 
In Section \ref{sec:class-lag-pres}, we introduce the class $\lag (A,B)$, and
state its main properties. In Section \ref{sec:projectors-Toeplitz}, we prove
variations of the theorems stated before, where the $\Hilb_{m,k}$ are subspaces of $\Ci ( M , L^k
\otimes A)$  such that the corresponding family
$(\Pi_{m,k})$ of orthogonal projectors belongs to $\lag^+ (A)$ with a
convenient symbol.
Sections \ref{sec:landau-levels-cn}, \ref{sec:schw-kern-oper} and
\ref{sec:derivatives} are devoted to the proof of the properties of $\lag
(A,B)$. The proofs of  Theorems  \ref{theo:dim_landau}, \ref{theo:Toeplitz_Landau} and
\ref{theo:ladder} is given in the last subsection \ref{sec:ledernier}. In the appendix \ref{sec:appendix}, we prove formulas  \eqref{eq:surface_eignvalue_dim} on constant
curvature surface.

\subsubsection*{Acknowledgment} I would like to thank Yuri Kordyukov for his collaboration at an early stage of this
work. I thank as well the anonymous referees for their careful reading, their
comments and their suggestions.

\section{The class \texorpdfstring{$\lag (A,B)$}{L(A,B)}} \label{sec:class-lag-pres}

  We start the discussion with the algebra in which the symbol of operators of
  $\lag (A, B)$ takes their values. The class $\lag (A,B)$ is defined in
  Subsection \ref{sec:operators} \cA and its main properties are stated, the proof
  are postponed to Section \ref{sec:schw-kern-oper}. \clb

  \subsection{Symbol spaces} \label{sec:symbol-spaces}

\subsubsection*{The algebra $\symb ( \C^n)$}
Let $n$ be a positive integer and denote by $z_1, \ldots, z_n$ the linear coordinates of
$\C^n$. Let $\D (\C^n)= \C [ \con{z}_1, \ldots, \con{z}_n]$ be the space of
antiholomorphic polynomial maps from $\C^n$ to $\C$.
Introduce the scalar product 
\begin{gather} \label{eq:scal_product_bargm}
 \langle f, g \rangle = (2\pi)^{-n}\int_{\C^n} e^{-|z|^2} f (z)\, \con{g (z)} \; d \mu_n
(z) , \qquad f,\; g \in \D ( \C^n)
\end{gather}
where $|z|^2 = \sum_{i=1}^n |z_i|^2$ and $ \mu_n $ is the measure $\prod_{i=1}^n dz_i
d\con{z}_i$. The family $((\al !)^{-\frac{1}{2}} \con{z}^\al , \, \al \in
\N^n)$ is an orthonormal basis of $\D ( \C^n)$. We will also need the decomposition into
even and odd functions
\begin{gather} \label{eq:paire_impaire}
\D ( \C^n) = \D^+ (\C^n) \oplus \D ^{-} ( \C^n) 
\end{gather}
where $ \D^+ (\C^n) $ is spanned by the $\con{z}^\al$ with $|\al| = \sum \al(i)$
even and $\D ^{-} ( \C^n) $ by the $\con{z}^\al$ with $|\al|$ odd. 

Let $\symb (\C^n)$ be the space of endomorphisms \footnote{\cA In the whole paper,
the endomorphisms are vector space endomorphisms \clb} $s $ of $\D(
\C^n)$ such that $s ( \con{z}^\al) = 0$ except for a finite number of $\al \in
\N^{n}$. We claim that $\symb ( \C^n)$ is closed under product and
taking adjoint. To see that, simply observe that $\symb ( \C^n)$ is the space of
endomorphisms having a matrix in the basis $(\con{z}^\al)$ whose almost all
entries are equal to zero. Notice as well that the family $(\Uu_{\al, \be}, \;
\al, \be \in \N^n)$ of $\symb (\C^n)$
defined by
\begin{gather} \label{eq:def_Ualphabeta}
  \begin{split} 
\Uu_{\al \be } \bigl( (\be !)^{-\frac{1}{2}}  \con{z}^\be \bigr) = (\al !)^{-\frac{1}{2}}
\con{z}^{\al}, \\ \Uu_{\al \be} ( \con{z}^{\ga} ) = 0 , \quad \forall \ga \in \N^n
\setminus \{ \be \}
\end{split} 
\end{gather}
is a vector space basis of $\symb ( \C^{n})$. And we have
\begin{gather} \label{eq:rel_U}
\Uu_{\al \be} \circ \Uu_{\tilde \al \tilde \be } = \delta_{\be \tilde{\al}}
\Uu_{\al \tilde \be} , \qquad \Uu_{\al \be}^* =
\Uu_{\be \al} 
\end{gather}
for all $\al$, $\be$, $\tilde \al$, $\tilde \be$ in $\N^n$.

Each element $s \in \symb ( \C^n)$
can be written in a block matrix $s = \begin{pmatrix} s_{++} & s_{-+}
  \\ s _{+-} & s_{--} 
\end{pmatrix}
$ in the decomposition \eqref{eq:paire_impaire}, which leads to a decomposition into even/odd endomorphisms
\begin{gather} \label{eq:symb+_-}
\symb ( \C^n) =   \symb^{+} ( \C^n) \oplus  \symb^{-} ( \C^n) 
\end{gather}
where $s \in \symb^{+} ( \C^n)$ iff $s_{-+} = s_{+-} =0$, and  $s \in
\symb^{-} (\C^n)$ iff $s_{++} = s_{--} = 0
$. Observe that $\Uu_{\al\be}$ has the same parity as $|\al | + | \be|$.
Furthermore
\begin{gather} \symb^{\ep} ( \C^n)\, \cdot \, \symb^{\ep'} ( \C^n) \subset
  \symb^{\ep \ep'} ( \C^n)
\end{gather}
for any $\ep, \ep' \in \{ 1, -1 \}$.

\subsubsection*{Extension to vector bundles} 
In the previous definitions, we can replace $\C^n$ by any $n$-dimensional
Hermitian vector space $\EE$. We denote by $\D (\EE)$ the space of
antiholomorphic polynomial maps $\EE \rightarrow \C$. Choosing an orthonormal
basis $(e_i)$ of $\EE$, we can identify $\EE$ with $\C^n$ and then define the scalar product of
$\D (\EE)$ by the formula \eqref{eq:scal_product_bargm}. Since the
weight $|z|^2$ and the measure $d \mu_n$ are invariant by unitary change of
coordinates, the resulting scalar product of $\D (\EE)$ is independent of
$(e_i)$. Similarly, we define the subspace $\symb (\EE)$ of the space of
endomorphisms of $\D (\EE)$ and associated to the basis $(e_i)$ of $\EE$, we have a
basis $(\Uu_{\al,\be} ,\; \al, \be \in \N^n )$ of $\symb (\EE)$. The decompositions
into even/odd elements are defined and denoted as for $\C^n$ by
\begin{gather}
  \D (\EE) = \D^+ (\EE) \oplus \D^{-} (\EE), \qquad \symb (\EE) = \symb ^{+} (\EE) \oplus
  \symb^{-}  (\EE). 
\end{gather}

We can extend all these constructions to vector bundles. Let $\EE \rightarrow M$ be a
Hermitian vector bundle with rank $n$. Define the infinite-dimensional vector
bundles $\D (\EE)$ and $\symb (\EE)$ over $M$ with fibers $\D (\EE)_x  = \D (\EE_x)$ and
$\symb (\EE)_x = \symb (\EE_x)$. Later, we will choose for $\EE$ the complex tangent bundle
of an almost-complex manifold, and we will construct operator whose symbols are
smooth sections of $\mathcal{S} (\EE) \otimes A$, where $A$ is an auxiliary
vector bundle.  Since the bundle $\symb(\EE)$ has infinite rank, let us make 
precise the definition of its smooth sections: a section $s \in \Ci ( M ,
\symb (\EE) \otimes A)$ is a family $(s(x) \in \symb (\EE_x) \otimes A_x, \; x \in M)$ such that for any
orthonormal frame $(e_i)$ of $\EE$ and $(a_j)$ of $A$ over the same open set $U$ of $M$, if $(\Uu_{\al,
  \be}(x))$ is the basis of $\symb (\EE_x)$ associated to the basis $(e_i(x))$,
then
$$ s (x) = \sum \la_{\al,\be,j} (x) \, \Uu_{\al,\be} (x) \otimes a_j(x), \qquad
x \in U $$
where the $\la_{\al, \be, j} $ are smooth functions on $U$, almost all equal
to zero.

\subsection{Operators} \label{sec:operators}

\subsubsection*{Schwartz kernel}

Consider a compact symplectic manifold $(M, \om)$ with a compatible
almost-complex structure $j$ and a prequantum bundle $L \rightarrow M$. 
Assume we have two auxiliary Hermitian vector bundles $A$ and $B$ over
$M$. The dimension of $M$ is $2n$ with $n \in \N$.

We will define a space $\lag ( A, B)$ consisting of families
of operators
\begin{gather} \label{eq:P_k_family}
  \bigl( P_k : \Ci (M, L^k \otimes A ) \rightarrow \Ci ( M , L^k
  \otimes B ), \; k \in \N \bigr) 
\end{gather}
having smooth Schwartz kernels satisfying some conditions. Let us first recall
some standard definitions and notations.

\cB We denote by $\con{A}$ the conjugate
 bundle of $A$ and by $A \boxtimes B$ the external tensor product of $A$ and
 $B$. \clb
The Schwartz kernel of $P_k$ is the section $K_k$ of $(L^k \otimes B) \boxtimes
(\con{L}^k \otimes \con{A})$ such that
$$ (P_k f )(x) = \int_M K_k (x,y)\cdot f(y) \; \mu_M ( y), \qquad \forall \;
f \in \Ci ( M , L^k \otimes A) $$
where the $\cdot$ stands for the scalar product $(\con{L}_y^k \otimes
\con{A}_y ) \times (L_y^k \otimes A_y) \rightarrow \C$, and $\mu_M  =
\om^n/n!$. We will denote the
operator and its Schwartz kernel by the same letter, hoping it is not too
confusing. 

Since $L$, $A$ and $B$
are Hermitian bundles, the bundle $(L^k \otimes B) \boxtimes (\con{L}^k
\otimes \con{A})$ has a natural metric, so the pointwise norm $|P_k (x,y)|$ is
well-defined. For any $N \in \N$, we will say that
$(P_k)$ is in $\bigo ( k^{-N})$ on an open set $U$ of $M^2$ if $|P_k (x,y)| =
\bigo ( k^{-N})$ for $(x,y) \in U$ with a $\bigo$ uniform on any compact subsets
of $U$. We say that $(P_k)$ is in $\bigo ( k^{-\infty})$ on $U$ if $(P_k)$ is in
$\bigo ( k^{-N})$ on $U$ for any $N$.

We will also use the uniform norm $\| P_k \| = \sup \|P_k (f) \| / \| f\|$
with respect to the usual $L^2$ norms of section: $\|f\|^2 = \int_M |f(x)|^2
\; d\mu_M (x)$.

\subsubsection*{Definition of $\lag (A,B)$}

\cA 

By definition, a family $(P_k)$ as in \eqref{eq:P_k_family} belongs to $\lag ( A , B)$
if each $P_k$ has smooth Schwartz kernel satisfying for any $N$ 
\begin{gather} \label{eq:exp_Lag_bis}
P_k (x,y) = \Bigl( \frac{k}{2\pi} \Bigr)^n E^{k}(x,y) \sum_{\substack{\ell \in
      \Z, \\ \ell + m ( \ell ) \leqslant N}} 
k^{-\frac{\ell}{2}} b_{\ell}(x,y) + \bigo ( k^{n - \frac{N+1}{2}})
\end{gather}
where 
\begin{enumerate} 
\item  $E$ is a section $L \boxtimes \con{L}$ such that $|E (x,y)| < 1$ for
  any $x\neq y$, and for any $y \in M$, the section $E_y (x) = E(x,y)$ of $L
  \boxtimes \con{L}_y$ satisfies $E_y (y) = u
  \otimes \con{u}$ for any $u \in L_y$ with $|u|=1$, 
 $(\nabla E_y )(y) =0$ and  
$$  ( \nabla_\xi \nabla_\eta E_y ) (y)  = -(\tfrac{i}{2} \om ( \xi, \eta) +
  \tfrac{1}{2} \om ( \xi , j \eta)) E_y (y), \quad \forall \; \xi ,\eta
  \in T_yM . $$
\item $m : \Z \rightarrow \N \cup \{ \infty \}$ is such that $\{ \ell; \; \ell
  + m ( \ell ) \leqslant N \}$ is finite for any $N$ and $\ell + m ( \ell )
  \geqslant 0$ for any $\ell$. Moreover, for any $\ell$, $b_{\ell} $ is a
  section of $B \boxtimes \con{A}$ vanishing to order $m(
  \ell)$ along the diagonal. 
\end{enumerate} 
As already mentionned in the introduction, the analytic meaning of the
expansion \eqref{eq:exp_Lag_bis} is not obvious, nevertheless we postpone the
explanations to Section \ref{sec:global-expansion}, cf. Lemma
\ref{lem:estimate} and Lemma \ref{lem:estim_expansion}. The existence of $E$ will
be proved in Section \ref{sec:section-ee}, it is not unique, \clb \cB any section $E$
satisfying the stated conditions can be used for the expansion
\eqref{eq:exp_Lag_bis}, but the coefficients $b_{\ell}$ depend on the choice
of $E$, cf. Lemma \ref{lem:changement_E}. \clr \cA

By rescaling the coordinates transverse to the diagonal by a factor
$k^{\frac{1}{2}}$, we can write the expansion \eqref{eq:exp_Lag_bis} in the
following alternative way. To simplify the statement, we assume first that that
$A$ and $B$ are the trivial line bundle $\C_M := M \times \C$ and let $\lag (
\C) := \lag (\C_M, \C_M)$. 

\begin{prop} \label{prop:un_de_plus}
  Let $(P_k)$ be an operator family $(P_k)$ of the form \eqref{eq:P_k_family}
  with smooth Schwartz kernels. Then $(P_k)$ belong to $\lag (\C)$ if and
  only if the  Schwartz kernel family is in $\bigo ( k^{-\infty})$ on $M^2
  \setminus \op{diag} M$ and for any coordinate chart $U \subset M$ and unitary frame $t : U \rightarrow L$, for
any $N \in \N$, we have over $U^2$ that
\begin{gather} \label{eq:expansion_kernel}
P_k ( x + \xi , x) = \Bigl( \frac{k}{2\pi} \Bigr)^n e^{- k\varphi (x, \xi) } 
\sum_{p =0 }^{N}  k^{- \frac{p}{2}} a_{p}  \bigl( x,
 k^{\frac{1}{2}} \xi \bigr)  + \bigo \bigl( k^{n - \frac{N+1}{2}} \bigr)
\end{gather}
where we have identified $L_{x+\xi}^k \otimes \con{L}_{x}^k \simeq \C$ by using
$t$ and 
\begin{itemize}
\item[-]  $\varphi( x, \xi ) = - i \bigl (\sum_{i=1}^{2n} \al_i (x)\xi_i + \frac{1}{2}\sum_{i,j=1}^{2n}
  (\partial_{x_i}\al_j)(x) \xi_i \xi_j \bigr) + \frac{1}{4} |\xi|_x^2 $, with
  $\al =
  \sum \al_i dx_i \in \Om^1 (U, \R)$ the connection one-form defined by $\nabla t = \frac{1}{i} \al \otimes t$. 
\item[-] $a_{p} (x,\xi) \in \C$ depends polynomially on $\xi$, meaning that for some
  $d(p) \in \N$,  $a_p (x,\xi) =
  \sum_{|\al |\leqslant d(p)} a_{p,\al} (x) \xi^\al$ with smooth coefficients
  $a_{p, \al}$.
\end{itemize}
\end{prop}

The proof is postponed to Section \ref{sec:global-expansion}. Since the real part of $\varphi
(x,\xi)$ is $\frac{1}{4} |\xi|_x^2$, we have for any $p$
$$ e^{-k \varphi (x, \xi)} a_p \bigl( x, k^\frac{1}{2} \xi \bigr)   = \bigo
(1).$$
So the $p$-th summand in
\eqref{eq:expansion_kernel} is in $\bigo (k^{n -\frac{p}{2}} \bigr)$ and the
expansion is meaningful. 
In the case where $A$ and $B$ are general vector bundles, we introduce frames
of $A$ and $B$ on $U$, so that the  Schwartz kernel $P_k$ on $U^2$ becomes a
$\C^r$-valued functions with $r =(\op{rank} A)( \op{rank} B)$, and we have the
same characterization  with $\C^r$-valued coefficients $a_p$.
\clb


The
advantage of the expansion \eqref{eq:expansion_kernel} is that its analytical
meaning is more transparent, the
drawback is that it depends on local choices (coordinates, frames, rescaling
$k^{\frac{1}{2}} \xi$) whereas the
expansion \eqref{eq:exp_Lag_bis} is global. 

\subsubsection*{Properties of $\lag (A,B)$}

$\lag (A , B)$ has a natural filtration defined as follows. For any $q \in
\N$,  $\lag _q ( A , B)$ is the subspace of $\lag (A,B)$ consisting of the
operators such that the local expansions \eqref{eq:expansion_kernel} hold
with a sum starting at $p = q$, that is the coefficients $a_0$, \ldots
$a_{q-1}$ are zero.  

\begin{prop} \label{prop:lag} $ $
  \begin{enumerate}
  \item    $\lag _q ( A, B) = k^{-\frac{q}{2}}
    \lag (A, B)$ and if $q \geqslant q'$, then $\lag_q( A,B) \subset \lag_{q'} (A,B)$.
  \item  For  any $(P_k)$ in $\lag (A, B)$, 
  \begin{xalignat*}{2}
 &  (P_k) \in \lag_q(A,B)  \Leftrightarrow  \| P_k \| = \bigo (k^{-\frac{q}{2}}) \\
   & \Leftrightarrow  \text{the Schwartz kernel family of $(P_k)$ belongs to }
     \bigo ( k^{n-\frac{q}{2}}) 
 \end{xalignat*}
\item  $\lag_{\infty} (A,B) :=  \bigcap_q \lag_q( A,B)$ consists of the families
  \eqref{eq:P_k_family} with a smooth Schwartz kernel in $ \bigo ( k^{-\infty})$.
\item for any sequence $(P_q)_{q \in \N}$ of $\mathcal{L} (A,B)$ such that $P_q \in \lag_q(A, B)$ for any
$q$, there exists $P \in \lag (A,B)$ satisfying $P = \sum_{p=0}^q P_p$
modulo $\lag_{q+1} (A,B)$ for any $q$.
\end{enumerate}
\end{prop}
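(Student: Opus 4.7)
The plan is to treat the four items in order, with part (2) doing most of the work and parts (1), (3), (4) reducing to it.

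Part (1) is immediate from the definition: multiplying the local expansion \eqref{eq:expansion_kernel} by $k^{-q/2}$ shifts the summation index by $q$, so $k^{-q/2}\lag(A,B) = \lag_q(A,B)$; vanishing of the first $q$ coefficients in the expansion trivially implies vanishing of the first $q' \leqslant q$, giving $\lag_q \subset \lag_{q'}$. Part (3) will follow from part (2): an operator lying in every $\lag_q$ has operator norm $\bigo(k^{-q/2})$ for every $q$, hence $\bigo(k^{-\infty})$, and similarly for the kernel on any neighborhood of the diagonal; off-diagonal it is $\bigo(k^{-\infty})$ by the definition of $\lag$, and together these give the global $\bigo(k^{-\infty})$ Schwartz kernel.

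For part (2), the implication $(P_k)\in\lag_q \Rightarrow |P_k(x,y)| = \bigo(k^{n-q/2})$ is a direct estimate: since $\op{Re}\varphi(x,\xi) = \tfrac14|\xi|^2$, the factor $e^{-k\varphi(x,\xi)}$ absorbs the polynomial growth of each $a_p(x, k^{1/2}\xi)$, giving a uniformly bounded contribution for each $p$ and hence the claimed kernel bound on a neighborhood of the diagonal. From this pointwise kernel estimate the operator norm bound follows by Schur's test: after rescaling $\eta = k^{1/2}\xi$, $\int |P_k(x,y)|\,\mu_M(y) = \bigo(k^{n-q/2}\cdot k^{-n}) = \bigo(k^{-q/2})$, and symmetrically for integration in $x$. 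The delicate direction is the converse, that $\|P_k\| = \bigo(k^{-q/2})$ forces the first $q$ coefficients to vanish. I argue by contradiction: if some smallest $p_0 < q$ has $a_{p_0}(x_0,\eta_0)\neq 0$, I construct coherent test sections $f_k$ of unit $L^2$-norm, concentrated on the $k^{-1/2}$-scale near a suitable $y_0$ and oscillating to match the phase $e^{-k\varphi}$. After the change of variable $y = y_0 + k^{-1/2}\eta$, the expansion shows that $(P_kf_k)(x_0 + k^{-1/2}\eta_0)$ picks up the $a_{p_0}$ contribution, and integrating over the localized support produces $\|P_kf_k\| \geqslant c\,k^{-p_0/2}$, contradicting $\|P_k\| = \bigo(k^{-q/2})$ since $p_0 < q$.

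For part (4), I use Borel-type summation. Fix $\chi \in \Ci(\R)$ with $\chi(t) = 0$ for $t \leqslant 1$ and $\chi(t) = 1$ for $t \geqslant 2$, and a sequence $(\la_p)$ to be chosen; set $P_k := \sum_p \chi(k/\la_p)\,P_{p,k}$, which is a finite sum for each $k$ since $\chi(k/\la_p) = 0$ once $\la_p \geqslant k$. Using part (2) to bound both $\|P_{p,k}\|$ and the finitely many local kernel coefficients of $P_{p,k}$ needed to match the class definition by constants $C_p$ times the appropriate power of $k$, pick $\la_p$ large enough that $C_p/\la_p^{1/2} \leqslant 2^{-p}$. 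Then for each $q$ the difference $P_k - \sum_{p\leqslant q} P_{p,k}$ decomposes as a finite sum of terms $(\chi(k/\la_p)-1)P_{p,k}$ with $p \leqslant q$, vanishing for $k$ large, plus a geometric tail $\sum_{p > q}\chi(k/\la_p)P_{p,k}$ bounded by $C\,k^{-(q+1)/2}$ in operator norm and $C\,k^{n-(q+1)/2}$ pointwise in kernel, hence an element of $\lag_{q+1}$ by part (2). The main obstacle is the third implication of part (2): producing a lower bound on $\|P_k\|$ from a nonvanishing leading coefficient requires careful coherent-state analysis that uses both the modulus and the oscillating phase of the local kernel, rather than just the Gaussian envelope.
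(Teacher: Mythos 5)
Your overall route coincides with the paper's: forward kernel estimate via the Gaussian weight, operator-norm bound via Schur, lower bound via coherent/peaked sections, Borel summation for part (4). The one structural difference is in part (2): the paper proves the kernel characterization $(P_k)\in\lag_q \Leftrightarrow |P_k|=\bigo(k^{n-q/2})$ separately (Proposition \ref{prop:filtration-symbol}, whose hard direction is eq.~\eqref{eq:esti_iff} and is imported from \cite{oim}), and then treats the operator-norm lower bound independently through Corollary \ref{cor:norm_estim}. You instead close a cycle ($\lag_q \Rightarrow$ kernel $\Rightarrow$ norm $\Rightarrow \lag_q$), which makes the kernel characterization a by-product of the coherent-state argument and dispenses with the citation to the earlier paper. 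This is a perfectly valid alternative organization and is arguably slightly more self-contained.

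The place where you are too vague, and where I believe you have mislocated the difficulty, is the lower bound. You build a single unit-norm Gaussian test section $f_k$ and evaluate $(P_kf_k)$ pointwise to ``pick up the $a_{p_0}$ contribution.'' The subtlety is not, as you suggest, that one must track the oscillating phase rather than just the envelope; the subtlety is that a pure Gaussian (i.e.\ the peaked section $\Phi_k^f$ with $f \equiv 1$) generally does \emph{not} detect $a_{p_0}(x_0,\cdot)$. What one actually sees is $\op{Op}\bigl(a_{p_0}(x_0,\cdot)\bigr)\big|_{\D(T_{x_0}M)}$ applied to $f$, and for $f\equiv 1$ this image can vanish even when $a_{p_0}(x_0,\cdot)\neq 0$. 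The paper's Proposition \ref{prop:peaked-sections_++} fixes this by allowing the peaked section to be modulated by an arbitrary antiholomorphic polynomial $f\in\D(T_{x_0}M)\otimes A_{x_0}$, and the lower bound then rests on two facts established in Section \ref{sec:landau-levels-cn}: that $\op{Op}$ is injective on $\pol(\C^n)$ (Proposition \ref{prop:tilde_symb_and_op}) and that the restriction from $\pol(T_xM)$-endomorphisms to $\D(T_xM)$-endomorphisms is injective. Without invoking that injectivity and without ranging over a family of modulated Gaussians, the contradiction does not go through. With those two ingredients added, your argument matches the paper's Corollary \ref{cor:norm_estim}. Your Borel summation with cutoffs $\chi(k/\la_p)$ is a correct variant of the construction the paper cites from \cite{oim}; one just has to check, as you indicate, that both the pointwise kernel bounds and the finitely many local coefficients needed at each order $N$ are controlled uniformly across charts (possible by compactness of $M$), and the estimate on the tail is cleanest if one treats the $p=q+1$ summand separately from the geometric sum $p\geqslant q+2$.
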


We will now describe the quotients $\lag_q ( A,B)/ \lag_{q+1} (A,B)$ by using
the material  introduced in Section \ref{sec:symbol-spaces}. Since $M$ has an almost
complex structure $j$ compatible with $\om$, the tangent bundle $TM$ is a complex vector bundle with
a Hermitian metric, which defines our bundle $\symb (M) := \symb (TM)$. 

\begin{theo} \label{theo:lag}
  For any $q \in \N$, there exists a linear map
  $$\si_q: \lag_q (A,B)  \rightarrow \Ci ( M, \symb (M) \otimes \op{Hom}
  (A,B))$$  which is onto and has  kernel $\lag_{q+1} (A,B)$. Furthermore, the
  following holds for any $ P \in \lag_q ( A,B)$: 
  \begin{enumerate} 
\item $\si _q ( P) = \si_0 ( k^{\frac{q}{2}} P)$.
\item For any  $f \in  \Ci ( M , \op{Hom} (B,
  C))$, $( f \circ P_k )$ belongs to $\lag_q ( A,C)$ and $\si_q( f \circ P)
  = f \circ \si_q (P)$. For any $g \in \Ci ( M , \op{Hom} ( C, A))$, $( P_k
  \circ g)$ belongs to $\lag_q ( C,B)$ and $\si_q ( P \circ g) = \si_q ( P)
  \circ g$.
  \item  $P^* $ belongs to $\lag_q
    ( B,A)$ and $\si (P^*) = \si (P)^*$.
    \item For any  $P' \in \lag_{q'}(B,C)$,
      $P' \circ P$ belongs to $\lag_{q'+ q }( A,C)$ and
      $$ \si_{q'+ q} ( P' \circ P) = \si_{q'} (P') \circ \si_q (P).$$
    \item The Schwartz kernel of $P_k$ on the diagonal satisfies
      $$ P_k (x,x) = \frac{k^{n-\frac{q}{2}}}{ (2\pi)^n} \Bigl[ \op{tr} ( \si_q( P)(x))
      + \bigo ( k^{-\frac{1}{2}}) \Bigr] $$
      where  $\op{tr}$ is the map $\symb (T_xM) \otimes \op{Hom}
      (A_x,B_x) \rightarrow (L_x \otimes \con{L}_x)^k \otimes B_x \otimes
      \con{A}_x \simeq \op{Hom}
      (A_x,B_x)$ sending $s \otimes f$ to $(\op{tr} s) f$.
    \item \cB The operator norm of $P_k$ satisfies
      $$ \| P_k \| = k^{-\frac{q}{2}}  \bigl( \sup_{x \in M} \|\si_q ( P) (x) \| +
      \op{o} (1) \bigr)$$
      where $\|\si_q ( P) (x) \|$ is the operator norm for the norm of $\D (
      T_xM)$ corresponding to the scalar product
      \eqref{eq:scal_product_bargm}. \clb
  \end{enumerate}
\end{theo}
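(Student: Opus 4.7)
The plan is to build $\si_q$ from the leading coefficient $a_q$ of the local expansion \eqref{eq:expansion_kernel} via a Bargmann--Fock type identification, then verify each property by reducing to the trivial bundle case and then to standard computations on $\C^n$. I will first give the construction and the easy assertions (1), (2), (3), (5), and then address the composition formula (4) which is the main step.

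\emph{Step 1 (definition and invariance).} Fix $x \in M$ and pick a local coordinate chart $U$ around $x$ together with a unitary frame $t$ of $L$ on $U$. The leading coefficient $a_q(x, \xi) \in \op{Hom}(A_x, B_x) \otimes \C[\xi]$ in \eqref{eq:expansion_kernel} depends polynomially on $\xi \in T_xM$. Using the complex structure $j$, decompose $T_xM \otimes \C = T_x^{1,0}M \oplus T_x^{0,1}M$ and write $\xi = u + \con{v}$; then identify $a_q(x, \cdot)$ with a polynomial $p(u, \con{v})$. Standard Bargmann--Fock theory says that the Schwartz kernel of any element $s \in \symb(T_xM)$ with respect to the measure $e^{-|z|^2} d\mu_n/(2\pi)^n$ is precisely such a polynomial in $(u, \con{v})$, and one checks on the basis $(\Uu_{\al \be})$ that this correspondence is a bijection. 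Define $\si_q(P)(x)$ to be the element of $\symb(T_xM) \otimes \op{Hom}(A_x,B_x)$ corresponding to $a_q(x, \cdot)$. The local expansion \eqref{eq:expansion_kernel} already singles out the phase $\ph$ canonically up to order $3$ in $\xi$, so the only ambiguity comes from changes of frame $t \to e^{if}t$ and of coordinates; the induced change of $\al$ alters the first-order part of $\ph$ by an exact form that is absorbed into conjugation by $e^{ikf(x)}$ on the kernel, and this leaves $a_q(x,\cdot)$ invariant. Thus $\si_q(P)$ is an intrinsic smooth section of $\symb(M) \otimes \op{Hom}(A,B)$.

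\emph{Step 2 (easy properties).} Assertion (1) is immediate from the definition. The exactness of the sequence — surjectivity and kernel $\lag_{q+1}$ — is proved as follows: kernel is by definition, and for surjectivity, given a section $s$ supported in a chart and locally expressed in the basis $(\Uu_{\al \be} \otimes a_j)$ with smooth coefficients, one writes down a Schwartz kernel of the form $(k/2\pi)^n e^{-k \ph} k^{-q/2} a_q(x, k^{1/2}\xi)$ with $a_q$ the polynomial matching $s$, and then glues via a partition of unity; item (4) of Proposition \ref{prop:lag} takes care of the resulting approximation. Property (2) on multiplication by a $\op{Hom}$-valued function reduces to the obvious fact that $f(x) P_k(x,y)$ has the same phase and its leading polynomial is $f(x) \cdot a_q(x, \xi)$, and similarly for right multiplication (using $f(y) = f(x) + \bigo(|\xi|)$, where the $\bigo$ is absorbed into $\lag_{q+1}$). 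Property (3) on adjoints follows from $P_k^*(x,y) = \con{P_k(y,x)}$: swapping and conjugating the expansion \eqref{eq:expansion_kernel} transforms the phase into its conjugate and the polynomial into its conjugate-transpose with respect to the Bargmann inner product, which is precisely the adjoint $\Uu_{\al \be}^* = \Uu_{\be \al}$ of \eqref{eq:rel_U}. Property (5) is obtained by setting $\xi = 0$ in \eqref{eq:expansion_kernel} and matching $a_q(x,0)$ with the trace in the Bargmann model, where the trace of the kernel at the origin coincides with $\op{tr}$ in the orthonormal basis $((\al!)^{-1/2}\con{z}^\al)$.

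\emph{Step 3 (composition, the main obstacle).} For $P' \in \lag_{q'}(B,C)$ and $P \in \lag_q(A,B)$, the composition has Schwartz kernel
\[
(P'P)_k(x,y) = \int_M P'_k(x,z) \cdot P_k(z,y)\; \mu_M(z).
\]
Off-diagonal the integrand is $\bigo(k^{-\infty})$, so the integral localizes to a neighborhood of $x$. Changing variables $z = x + k^{-1/2}\eta$ and expanding both factors according to \eqref{eq:expansion_kernel} converts the integral into a finite sum of Gaussian integrals in $\eta$ of polynomials in $\eta$ against a phase of the form $e^{-k(\ph(x,\xi - k^{-1/2}\eta) + \ph(x + k^{-1/2}\eta, k^{-1/2}\eta - \xi))}$ times the Riemannian density rescaled to $k^{-n}$. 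A Taylor expansion of $\ph$ and of the frame around $x$ shows that the leading $k$-dependent phase is precisely the one governing Bargmann--Fock composition on $T_xM$, and the Gaussian integration of polynomials then reproduces the composition formula for the basis operators $\Uu_{\al\be}$ via \eqref{eq:rel_U}. The leading order in $k$ is $k^{n - (q + q')/2}$ and its polynomial coefficient matches the kernel of $\si_{q'}(P') \circ \si_q(P)$; all other terms fall in $\lag_{q + q' + 1}$. The principal difficulty lies exactly here: handling the change of frame between $x$, $z$, and $y$ and verifying that cross terms from the expansion of $\ph$ contribute only to lower filtration degrees, or equivalently, that the Bargmann composition identity holds after the rescaling. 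This is most cleanly established by checking composition on the generators $\Uu_{\al\be}$, where it reduces to a direct computation with Gaussian moments, and then extending by linearity and the density provided by (1)--(3).
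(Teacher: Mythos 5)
Your overall plan mirrors the paper's: extract the leading coefficient $a_q$ of the local expansion, push it through a Bargmann--Fock identification into $\symb(T_xM)$, verify the formal properties, and wrestle with composition by localizing the convolution integral and doing Gaussian moments. But there are two places where the argument you sketch would not close.

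\textbf{Well-definedness of $a_q$.} You treat the invariance of $a_q(x,\cdot)$ under change of chart and unitary frame as an incidental point (``this leaves $a_q(x,\cdot)$ invariant''), but this is precisely where the nontrivial work lives. A change of frame $t \mapsto e^{if}t$ is indeed absorbed into conjugation, but a change of coordinates alters $\varphi$ in all orders, not just its linear part, and a change of the cut-off function or the quadratic normalization shifts the $a_p$'s with $p \geqslant 1$. The paper proves invariance by going through a global reformulation (Proposition \ref{prop:global_local_expansion}), comparing two admissible sections $E, E'$ via $E = e^{g}E'$ with $g = \bigo(3)$ along the diagonal (Lemma \ref{lem:changement_E}), and then extracting from that comparison that the order-$(q-\ell)$ Taylor coefficients $b_{\ell, q-\ell}$ with $\ell + m(\ell)=q$ are unchanged (Proposition \ref{prop:filtration-symbol}, part 4). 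That control on the vanishing order of $g$ is the reason why the top-degree piece survives; without it, your definition of $\si_q$ is not yet a map on $\lag_q(A,B)$.

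\textbf{Composition.} You acknowledge that Assertion (4) is the obstacle and propose to ``check composition on the generators $\Uu_{\al\be}$... and then extend by linearity.'' This is where the logic inverts. The operators in $\lag_q$ are not spanned by elements whose symbols are single $\Uu_{\al\be}$'s, and even if they were, linearity of $\si_q$ is no substitute for showing that the composite of two arbitrary kernels again lies in the class and has the asserted leading coefficient. The composition integral localizes as you say, but after rescaling, the phase one obtains is $\varphi(x,y,z) = (\si(x,z)-\si(y,z))\cdot\con{\si}(y,z)$ plus a cubic remainder $r = \bigo_\Sigma(3)$; the Gaussian integral that results is a model Bargmann composition only after one proves $E(x,y)E(y,z) = e^{\varphi + r}E(x,z)$ and re-expands $e^{kr}$ just as in the change-of-$E$ lemma. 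The Gaussian moment computation then produces the polynomial $J_{0,q''}(z,u) = (2\pi)^{-n}\int e^{u\cdot\con{v}-|v|^2}h_{0,q''}(z,u,v)\,d\mu_n(v)$, and the step that identifies this with the composite symbol is the operational identity $\op{Op}(f)\circ\op{Op}(g) = \op{Op}(\op{Op}(f)g)$ from Proposition \ref{prop:tilde_symb_and_op} --- not a moment identity on generators. Without invoking (or reproving) that identity, ``matching the kernel of $\si_{q'}(P')\circ\si_q(P)$'' is an assertion, not a derivation.

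In short, the approach is the right one and agrees with the paper's, but the two load-bearing lemmas --- invariance of the top coefficient under admissible changes of $E$, and the identity $\op{Op}(f)\circ\op{Op}(g)=\op{Op}(\op{Op}(f)g)$ --- are missing from your proposal, and both are needed.
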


Let us explain how is defined the symbol map $\si_0$ for $A= B = \C_M$.
Consider $P \in \lag (A,B)$ and the local expansion
\eqref{eq:expansion_kernel}. We view $(x,\xi)$ as a tangent vector of $M$,
that is  $
\xi \in T_xM$, so we consider $a_0 (x, \cdot)$ as a
polynomial of $T_xM$. Then   it is not obvious but nevertheless true that this polynomial does not depend on
the choice of the coordinate chart $U$ and the unitary frame $t$. To compare, the
coefficients  $a_p$ in \eqref{eq:expansion_kernel} with $p \geqslant 1$ do
depend on the choice of the coordinates and the frame of $L$.

To pass from $a_0(x,\cdot)$ to the symbol of $P$ at $x$, we first choose a
unitary frame $(e_i)$ of $T_xM$. So $T_xM \simeq \C^n$ by sending $\xi = \sum
z_i e_i(x) $ to $z(\xi)= (z_i)$. We also have a basis
 $\Uu_{\al, \be}(x)$ of $\symb (T_xM) $ defined in
\eqref{eq:def_Ualphabeta}. Then
$$ \si_0(P) (x) = \sum f_{\al, \be} (x) \Uu_{\al,\be} (x) \Leftrightarrow
a_0 (x,\xi) =  \sum f_{\al, \be} (x) \Ppp_{\al, \be} (z(\xi))$$
where we use the polynomials
$ \Ppp_{\al, \be} (z) =  \bigl( \frac{1}{\al! \be !} \bigr) ^{1/2} \bigl(
\partial_z - \con{z})^{\al} z^\be .$ These polynomials form a basis of $\C [z,
\con{z}]$, cf. proof of Proposition \ref{prop:tilde_symb_and_op}.

\begin{exe} \label{exemple:projecteur}
Choose a connection of $A$ and let $\Delta_k$ be the Laplacian
\begin{gather}  \label{eq:laplacien_fibre_auxiliaire}   
\Delta_k = \tfrac{1}{2} (\nabla^{L^k
  \otimes A})^* \nabla^{L^k \otimes A} : \Ci ( L^k \otimes A) \rightarrow \Ci
( L^k \otimes A).  
\end{gather}
For any $m \in \N$, let $\Pi_{m,k}$ be the spectral projector
\begin{gather} \label{eq:projecteur_fibre_auxiliaire}  
\Pi_{m,k} := 1_{[m - \frac{1}{2}, m +
  \frac{1}{2}]} ( k^{-1} \Delta_k).
\end{gather}
By \cite{oim_copain}, the family $(\Pi_{m,k})$ belongs to $\lag(A,A  )$, its
  $\si_0$-symbol at $x$ is $\pi_m(x) \otimes \op{id}_{A_x}$ where $\pi_m(x)$ is the projector of $\D (T_xM)$ onto the
  subspace $\D_m(T_xM)$ of homogeneous degree $m$ polynomials.
  
  Since  $
  \pi_m (x) = \sum_{|\al|=m} \rho_{\al, \al}(x)$, if the
  auxiliary bundle $A$ is trivial,  the corresponding
  function $a_0$ is  
\begin{gather} \label{eq:a_0laguerre}
  a_0(x, \xi) = \sum_{|\al| =m} p_{\al,\al}(z(\xi)) = Q_m^{(n-1)} (
  |z(\xi)|^2)
\end{gather}
where $Q_m^{(p)} $ is the Laguerre polynomial  $Q^{(p)}_m (x) = \frac{x^{-p}}{m!} \bigl (
\frac{d}{dx} -1 \bigr) ^m  x ^{m+p}$. The second equality in
\eqref{eq:a_0laguerre} follows from $p_{m,m} ( z) = Q_m^{(0)} (|z|^2)$ 
and the identity 
$$Q_m^{(n-1)} (x_1+ \ldots + x_n) =
\sum_{|\al| = m } Q_{\al(1)}^{(0)} (x_1) \ldots  Q_{\al(n)}^{(0)} (x_n).$$
Actually we won't use the expression in terms of Laguerre polynomials, what
really matters is the fact that $\si_0 ( \Pi_m) (x)$ is the orthogonal
projector onto $\D _m(T_xM)$. \qed
\end{exe}

The definition of the symbol map $\si_0$ is motivated by Theorem
\ref{theo:lag} and its efficiency in the proofs of Section
\ref{sec:projectors-Toeplitz}. But this definition does not explain why it is
natural to associate to $P \in \lag (A,B)$ an endomorphism of $\D
(T_xM)$. A first explanation is provided by the following construction of peaked
sections. A deeper reason will be provided later in Section
\ref{sec:landau-levels-cn}. 

We still assume that $A = B = \C_M$ to simplify the exposition.
 Let $x \in M$ be a base point, with a coordinate chart $U$  at
$x$ and a unitary frame $t: U \rightarrow L$. Let $\psi \in \Ci_0(U)$ be equal
to $1$ on a neighborhood of $x$.
  To any $f \in \D ( T_xM )$, we associate a family $\Phi_k^f \in \Ci ( M , L^k)$ defined by
$$ \Phi_k^f (x+\xi ) = \Bigl( \frac{k}{2\pi} \Bigr)^{\frac{n}{2}} e^{-k
  \varphi(x,\xi)} \, f(k^{\frac{1}{2}} \xi) \, \psi (x+\xi) \, t
^k(x+\xi) , \qquad k \in \N $$
where $\varphi$ is the same function as in \eqref{eq:expansion_kernel}. \cB Let
$\| f \| = \sqrt{(f,f)}$ be the norm associated to the scalar product
\eqref{eq:scal_product_bargm}. \clr

\begin{prop} \label{prop:peaked-sections}
  For any $f \in \D (T_xM)$,
\begin{gather} \label{eq:peaked_1}
  \| \Phi^f_k \| = \| f \| + \bigo ( k^{-1/2}),
\end{gather}
and for any $P \in \lag ( \C_M, \C_M)$,
\begin{gather} \label{eq:peaked_2}
  P_k \Phi^f_k = \Phi^g_k + \bigo ( k^{-1/2})
\end{gather}
where $g = \si_0(P)(x) \cdot f$.
\end{prop}
By \eqref{eq:peaked_1} the map sending $f$ into $(\Phi_k^f)$ is injective, so  \eqref{eq:peaked_2}  characterizes the symbol $\si_0 (
P) (x)$. 
For a more general result with auxiliary bundles $A$, $B$ and the estimates of the
scalar product of peaked sections, cf. Proposition \ref{prop:peaked-sections_++}.

\cA We say that an element  $P$ in $\lag (A,B)$ is even (resp. odd)   if 
  the expansion \eqref{eq:exp_Lag_bis} holds with
 $b_{\ell } = 0$ for any odd $\ell \in \Z$ (resp. even).
 \begin{lemme}\label{lem:parite}
    For any $P$ in $\lag (A,B)$, $(P_k)$ is even (resp. odd) if and only if
 in the local
expansions \eqref{eq:expansion_kernel}, every polynomial $a_p(x,\cdot)$ has
the same (resp. the opposite) parity as $p$.
\end{lemme}
\clb

Denote by $\lag^+ ( A, B)$ and $\lag^- ( A,
B)$ the subspaces of even and odd elements respectively. 

\begin{theo} \label{theo:parity}
  We have  \begin{enumerate}
\item $ \lag ( A, B) =  \lag^+ ( A, B) +  \lag^- ( A, B)$, $\lag^+ (A,B)
  \cap \lag ^{-} (A,B) = \lag_{\infty} (A,B)$. 
\item $\lag ^{\ep} (A,B) \cdot \lag^{\ep'} (B,C) \subset \lag ^{\ep \ep'}
      (A, C) $ for any choice of signs $\ep$, $\ep'$. 
\item  $\si_q( \lag_q(A,B) \cap \lag^{\ep} (A,B) ) = \Ci ( M,
      \symb^{\ep (-1)^q}(M) \otimes \op{Hom} (A,B))$. 
 \end{enumerate}
  \end{theo}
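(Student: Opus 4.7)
I will prove the three assertions in the order (1), (3), (2), and throughout I will use the global expansion \eqref{eq:exp_lag} in tandem with the local one \eqref{eq:expansion_kernel} (these being equivalent by Proposition \ref{prop:global_local_expansion}). For assertion (1), I first translate the parity condition between the two expansions. Substituting $\xi \mapsto k^{1/2}\xi$, each monomial $a_{p,\al}(x)\xi^\al$ in $a_p(x, k^{1/2}\xi)$ contributes $k^{(|\al|-p)/2}$ to the prefactor, which is an integer power of $k$ exactly when $|\al|$ and $p$ have the same parity. Hence $P \in \lag^+(A,B)$ iff its global expansion involves only integer powers of $k$, and $P \in \lag^-(A,B)$ iff only half-integer powers appear. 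For any $P \in \lag(A,B)$, discarding the half-integer (resp.\ integer) powers yields formal series $P^+$ and $P^-$ which, by Proposition \ref{prop:lag}(4), are realized by genuine operators, so $P = P^+ + P^-$ modulo $\lag_\infty(A,B)$. An element of $\lag^+ \cap \lag^-$ has all $b_\ell$ equal to zero, hence lies in $\lag_\infty(A,B)$.

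\emph{Assertion (3).} A direct computation shows that each term in the expansion of $\Ppp_{\al,\be}(z) = (\al!\be!)^{-1/2}(\partial_z-\con{z})^\al z^\be$ is a monomial $z^\ga\con{z}^\de$ with $|\ga|+|\de| = |\al|+|\be| - 2k$ for some $k \ge 0$, so $\Ppp_{\al,\be}$ has parity $|\al|+|\be| \pmod 2$ as a polynomial in $\xi$. The basis vector $\Uu_{\al,\be}$ has the same parity by \eqref{eq:def_Ualphabeta} and the decomposition \eqref{eq:symb+_-}. Therefore the identification $a_q(x,\cdot) = \sum f_{\al,\be}(x)\Ppp_{\al,\be}$ with $\si_q(P)(x) = \sum f_{\al,\be}(x)\Uu_{\al,\be}(x)$ matches parities. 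If $P \in \lag^\ep \cap \lag_q$, then by (1) the leading polynomial $a_q$ has parity $\ep(-1)^q$, so $\si_q(P) \in \symb^{\ep(-1)^q}(M) \otimes \op{Hom}(A,B)$. Conversely, given such a section $s$, Theorem \ref{theo:lag} yields $P_0 \in \lag_q$ with $\si_q(P_0) = s$; decomposing $P_0 = P_0^+ + P_0^-$ by (1), the other parity component of the symbol must vanish since $\symb^+ \cap \symb^- = 0$, so $P_0^\ep$ realizes $s$ with the correct parity.

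\emph{Assertion (2).} Let $P \in \lag^\ep_q(A,B)$ and $Q \in \lag^{\ep'}_{q'}(B,C)$. By Theorem \ref{theo:lag}(4), the composition $QP$ lies in $\lag_{q+q'}(A,C)$ with symbol $\si_{q+q'}(QP) = \si_{q'}(Q) \circ \si_q(P)$; applying (3) and the product rule $\symb^\alpha \cdot \symb^\beta \subset \symb^{\alpha\beta}$, this symbol belongs to $\symb^{\ep\ep'(-1)^{q+q'}}(M)\otimes\op{Hom}(A,C)$. Using (3) in the surjective direction, choose $R_0 \in \lag^{\ep\ep'}_{q+q'}(A,C)$ with the same leading symbol; then $QP - R_0 \in \lag_{q+q'+1}(A,C)$. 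Iterating yields $R_j \in \lag^{\ep\ep'}_{q+q'+j}(A,C)$ with $QP - \sum_{j=0}^N R_j \in \lag_{q+q'+N+1}$ for every $N$, and Proposition \ref{prop:lag}(4) assembles these into $R \in \lag^{\ep\ep'}(A,C)$ satisfying $QP - R \in \lag_\infty \subset \lag^{\ep\ep'}$, which gives $QP \in \lag^{\ep\ep'}(A,C)$. The main technical point is the careful bookkeeping of parities across the local-to-global passage in assertion (1); once that correspondence is cleanly established, assertions (3) and (2) follow from the symbol calculus and a routine successive approximation.
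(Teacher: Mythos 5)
Your treatment of assertions (1) and (3) is correct and matches the paper's route: assertion (1) amounts to Proposition \ref{prop:parity} (the global expansion \eqref{eq:exp_lag} involves only integer powers of $k$), and assertion (3) rests on the parity-preservation of $\op{Op}$ established in Proposition \ref{prop:tilde_symb_and_op}; both of your arguments are sound.

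The proof of assertion (2), however, has a genuine gap. The successive-approximation scheme requires, at each step $j\geqslant 1$, an operator $R_j \in \lag^{\ep\ep'}_{q+q'+j}(A,C)$ whose symbol equals $\si_{q+q'+j}\bigl(QP - \sum_{i<j} R_i\bigr)$. By assertion (3), such an $R_j$ exists only if this symbol lies in $\symb^{\ep\ep'(-1)^{q+q'+j}}(M) \otimes \op{Hom}(A,C)$ — but nothing in the symbol calculus guarantees that. The product formula $\si_{q+q'}(QP) = \si_{q'}(Q)\circ\si_q(P)$ controls only the \emph{leading} symbol; once $R_0$ is subtracted, the parity of the next symbol of the remainder $QP - R_0$ is an assertion about the sub-leading terms of the composed kernel, which is precisely the content of the statement you are trying to prove. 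Said differently: evenness is a property of the \emph{full} asymptotic expansion of the Schwartz kernel, not just of its top-order part, and the symbol-plus-Borel argument only bootstraps top-order information.

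The paper circumvents this by working directly with the composed kernel from the proof of Theorem \ref{theo:lag}(4): the coefficient $J_{\ell,m}(z,\cdot)$ appearing in \eqref{eq:dev_loc_R}, obtained by integrating the degree-$m$ homogeneous polynomial $h_{\ell,m}$ against the kernel $e^{u\cdot\con v - |v|^2}$ in \eqref{eq:j_ell-m}, inherits the parity of $m$ (the integral kernel is invariant under $(u,v)\mapsto(-u,-v)$). Since $J_{\ell,m}$ is paired with the factor $k^{\ell - m/2} = k^{-(m-2\ell)/2}$ whose index $m-2\ell$ has the same parity as $m$, the composed kernel is even when $P$ and $Q$ are both even, and this bookkeeping extends to the mixed cases by tracking the prefactors $k^{-\ell_0/2}$, $k^{-\ell_0'/2}$ of the individual summands of $P$ and $Q$. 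To make your proof complete you would need a version of this full-expansion argument, at which point you have essentially reproduced the paper's proof.
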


  The proofs of Proposition \ref{prop:lag}, Theorem \ref{theo:lag} and Theorem
  \ref{theo:parity} are postponed to Section \ref{sec:proofs-results}. The
  proof of Lemma \ref{lem:parite} is at the end of Section \ref{sec:global-expansion}.

  \subsection{Comparison with earlier works}

  The expansions \eqref{eq:exp_lag}, \eqref{eq:expansion_kernel} or similar versions appeared in
  the literature \cite{ShZe}, \cite{oim_op}, \cite{MaMa} to describe Bergman kernels of ample line bundles and their
symplectic generalizations  as well as the associated Toeplitz operators. In a
more general context, the Boutet de Monvel-Guillemin
theory \cite{BoGu} is built on two classes of operators:  Hermite
operators and Fourier integral operators respectively. The spaces $\lag^{+}
(A,B)$ may be viewed as an intermediate choice in the semi-classical setting.

  In \cite{oim}, we considered a subalgebra of  $\lag^{+} ( A,A)$,
  denoted by $\mathcal{A}(A)$, consisting of operators having an expansions \eqref{eq:expansion_kernel} in which
  each $a_p(x,\cdot)$ has degree $\leqslant \frac{3}{2} p$. For our
  applications in this paper, it is necessary to consider the larger spaces $\lag (A,B)$,
  because our generalized projectors $\Pi_m$ and unitary equivalences do not
  belong to $\mathcal{A} (A)$. More precisely, only the projector
  corresponding to the first Landau level belongs to $\mathcal{A} (A)$.

  Theorem \ref{theo:lag} is a generalization of similar results for
  $\mathcal{A}(A)$ established in \cite{oim}, and surprisingly the proofs are  somehow easier in this new generality. 
  However, a crucial difference with \cite{oim} relies in the 
  symbols. Roughly, the symbols of the elements of $\mathcal{A}(A)$ were
  defined directly as the polynomials $a_0 (x,\cdot)$.  This  had the
advantage that it is easier to pass from the Schwartz kernel of the
operator to the symbol. The drawback of
this definition is that the product for these symbols is given by the
mysterious formula
\begin{gather} \label{eq:old_prod}
(u \star v) ( x, z, \con{z})  =\bigl[ \exp ( \Box  ) ( u (x, - \zeta, \con{z} - \con {\zeta} ) v (x, z+ \zeta , \con{\zeta} )
 )\bigr]_{\zeta = \con{\zeta} =0 }   
\end{gather}
where $\Box = \sum \partial^2 / \partial \zeta^i \partial \con{\zeta}^i$. This
was actually tractable for what we did in \cite{oim} because our main interest
was the projector $\Pi$ for the first Landau level,  with symbol $\si_0(\Pi)=
\Uu_{00}$ and associated function symbol $u(x,\xi) =1$. But for our new projectors whose symbols are typically
a sum of the $\Uu_{\al\al}$'s, it is
essential to
work with the symbol in $\symb (M)$.  For instance, it is even not obvious
how to recover the relations \eqref{eq:rel_U} from the product 
\eqref{eq:old_prod}.

\section{Projectors of \texorpdfstring{$\lag (A)$}{L(A)} and Toeplitz
  operators} \label{sec:projectors-Toeplitz} 

In this section, we consider an auxiliary Hermitian vector bundle $A$ with
arbitrary rank. We
denote by $\lag (A): = \lag (A,A)$ the associated algebra and by $\lag^+(A)$
the subalgebra consisting of even elements. The symbols of operators of $\lag
(A)$ are sections of $\symb (M) \otimes \op{End} A$. We will view
$\symb (M)_x \otimes \op{End} A_x$ as a subspace of $\op{End} ( \D (T_xM)
\otimes A_x)$.

Let $F$ be  a subbundle of $\D_{\leqslant m}(TM)
\otimes A$ for some $m \in \N$, where $\D_{\leqslant m} (T_xM)$ is
the subspace of $\D(T_xM)$ of polynomial with degree $\leqslant m$. We assume
that $F$ has a definite parity, that is $F \subset \D ^\ep (TM) \otimes A$ with
$\ep \in \{\pm 1\}$. Associated
to $F$ is the section $\pi$ of $\symb (M) \otimes \op{End} A$ such that $\pi(x)$ is the
orthogonal projector of $\D(T_xM) \otimes A_x$ onto $F_x$ at each point $x \in M$. The
content of the following subsections is:
\begin{itemize}
  \item[-] 
\ref{sec:constr-proj}: we construct  a selfadjoint projector
$\Pi \in \lag ^+(A)$ with symbol $\pi$.
\item[-] \ref{sec:toeplitz-algebra}:  we study the
Toeplitz algebra 
$ \mathcal{T} = \{ \Pi P \Pi , \; P \in \lag^{+} (A) \} $
\item[-]  \ref{sec:unitary-equivalence}: we prove $(
  \op{Im} \Pi_k)$ is isomorphic with any quantization of $(L,M)$ twisted by
  $F$, and deduce that the dimension of $ \op{Im} \Pi_k$ is the Riemann-Roch number of $L^k \otimes F$ when $k$ is sufficiently
large.
\end{itemize}

A possible choice for $F$ is $F = \D_m (TM)\otimes A $ where $m\in \N$ and $\D_m
(T_xM)$ is the subspace of $\D (T_xM)$ consisting of homogeneous polynomials
with degree $m$. As explained in example \ref{exemple:projecteur}, the
projector $\Pi_{m,k}$ onto the $m$-th Landau level $\Hilb_{m,k}$  belongs to $\lag^+ (A)$
and has symbol the projector onto $F$, so it can be used as the projector $\Pi$. Theorems \ref{theo:dim_landau}, \ref{theo:Toeplitz_Landau} and
\ref{theo:ladder} will mainly follow from the results in Sections
\ref{sec:toeplitz-algebra} and  \ref{sec:unitary-equivalence}.


By \cite{oim_copain}, the spectral projectors of Laplacians with a
magnetic field, not necessarily constant but still satisfying some convenient
assumptions, give other instances of projectors in $\lag ^+(A)$. 

Another choice for $F$ is $F = \D_0 (TM) \otimes A$ where $\D_0 (T_xM) =\C$ is the
subspace of $\D ( T_xM)$ of constant polynomials. The corresponding quantum
space  and Toeplitz algebra is the quantization of $(M,L)$ twisted by $A$.

A last example is the Spin-c Dirac quantization twisted by an auxiliary bundle
$B$. In this case, $A = S \otimes B$ where $S$ is the
spinor bundle \clr $\bigoplus \bigwedge^k (T^*M)^{0,1} $ and $F = \D_0(TM)
\otimes \bigwedge^0 (T^*M)^{0,1} \otimes B$. \clb
This example will be used to compute the dimension of our quantum spaces
from the Atiyah-Singer Theorem.

\subsection{Construction of the projector}\label{sec:constr-proj}
Let $\chi : \R \rightarrow \R$ be defined by $\chi (x) =1$ if $x \geqslant
\frac{1}{2}$ and $\chi (x) =0$ otherwise. If $P$ is a bounded self-adjoint
operator of a Hilbert space $\mathcal{H}$, then using the functional calculus
for Borel bounded functions, we define a new bounded operator $\chi (P)$ of
$\mathcal{H}$, cf. as instance \cite[Theorem VII.2]{ReSi}. Since $\chi$ is
real valued and $\chi^2 = \chi$, $\chi (P)$ is a self-adjoint projector. 

\begin{theo} \label{theo:constr-proj} 
Let $P \in \lag ( A)$ be self-adjoint and having symbol
  $\si_0 (P) = \pi$. Then $\chi(P)$ belongs to $\lag (A)$ and
  $\si_0(\chi (P)) = \pi$. If furthermore $P \in \lag ^+ (A)$, then $\chi (P)$
  is in $\lag ^+ (A)$. 
\end{theo}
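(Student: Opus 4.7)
The plan is to construct, by a Newton-like iteration inside the filtered algebra $\lag(A)$, a self-adjoint element $Q$ that is a projector modulo $\lag_\infty$, and then identify it with $\chi(P)$. Consider the polynomial $p(t) = 3t^2 - 2t^3$, which fixes $0$ and $1$ with vanishing derivative at both points; in particular $p(\pi) = \pi$ since $\pi^2 = \pi$. Setting $Q_0 = P$ and $Q_{n+1} = p(Q_n)$ produces a sequence of self-adjoint polynomials in $P$, all in $\lag(A)$, with $\si_0(Q_n) = \pi$. The factorisation $p(t)(p(t)-1) = -(t^2 - t)^2 (3 + 4t - 4t^2)$ yields $R_{n+1} = -3 R_n^2 + 4 R_n^3$ where $R_n := Q_n^2 - Q_n$, and $Q_{n+1} - Q_n = R_n - 2 R_n Q_n$. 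Since $\si_0(R_0) = \pi^2 - \pi = 0$ we have $R_0 \in \lag_1(A)$, and by induction $R_n \in \lag_{2^n}(A)$ and $Q_{n+1} - Q_n \in \lag_{2^n}(A)$, so the sequence is Cauchy in the filtration.

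Using Proposition \ref{prop:lag}(4) I take the asymptotic sum $Q \in \lag(A)$, then replace it by $(Q + Q^*)/2$ to enforce self-adjointness (Theorem \ref{theo:lag}(3)); the leading symbol remains $\pi$. Since $p$ preserves $\lag^+(A)$ by Theorem \ref{theo:parity}(2), if $P \in \lag^+(A)$ then each $Q_n$ and hence $Q$ lies in $\lag^+(A)$. In either case, $Q^2 - Q \in \lag_{2^n}(A)$ for every $n$, hence $Q^2 - Q \in \lag_\infty(A)$.

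It remains to identify $Q$ with $\chi(P)$ modulo $\lag_\infty(A)$. In operator norm this is routine: $\|R_0\| = \bigo(k^{-1/2})$ forces $\si(P_k)$ into $\bigo(k^{-1/4})$-neighborhoods of $\{0,1\}$ for large $k$, on which $p$ contracts quadratically, so $\|Q_n - \chi(P)\| = \bigo(k^{-c \cdot 2^n})$; combined with $\|Q - Q_n\| = \bigo(k^{-2^{n-1}})$ this gives $\|Q - \chi(P)\| = \bigo(k^{-\infty})$ in norm. The main obstacle is to upgrade this norm estimate to an $\lag_\infty$ statement, i.e., to show that the difference has smooth Schwartz kernel with all derivatives in $\bigo(k^{-\infty})$. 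For this I would use the Riesz projection: pick a small contour $\gamma$ encircling $1$ but avoiding $0$, and take the parametrix $R(z) := z^{-1} I + Q [z(z-1)]^{-1}$. A direct computation gives $(zI - P) R(z) = I + S(z)$ with $S(z) \in \lag_1(A)$ depending holomorphically on $z$, so the Neumann expansion $(zI - P)^{-1} = \sum_{\ell \geq 0} (-1)^\ell R(z) S(z)^\ell$ inserted into $\chi(P) = (2\pi i)^{-1} \oint_\gamma (zI - P)^{-1}\, dz$ produces $\chi(P) = Q + \sum_{\ell \geq 1} T_\ell$ with $T_\ell \in \lag_\ell(A)$; the $I/z$ piece of $R(z)$ contributes zero since $0$ lies outside $\gamma$, while the residue at $z=1$ of $Q[z(z-1)]^{-1}$ reproduces $Q$. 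The asymptotic sum of this series in $\lag(A)$, via Proposition \ref{prop:lag}(4), coincides with the operator $\chi(P)$ modulo $\lag_\infty(A)$, which gives $\chi(P) \in \lag(A)$ with $\si_0(\chi(P)) = \pi$ and with the parity preserved when $P \in \lag^+(A)$.
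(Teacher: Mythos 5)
Your proof is correct, but it takes a genuinely different route from the paper's, so a comparison is worthwhile.

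The paper exploits the closed algebraic identity $\chi(x) = x + (1-2x)\,f(x^2 - x)$ with $f(y) = \tfrac{1}{2}\bigl(1 - (1+4y)^{-1/2}\bigr)$, valid for $x \neq 1/2$. Writing the Taylor expansion $f(y) = \sum_{\ell=0}^m a_\ell y^\ell + y^{m+1}f_m(y)$ and substituting $Q := P^2 - P$ gives in one stroke
$$\chi(P) = P + \sum_{\ell=0}^m a_\ell (1-2P)Q^\ell + (1-2P)Q^{m+1}f_m(Q),$$
where $Q^\ell \in \lag_\ell(A)$ because $\si_0(Q) = \pi^2 - \pi = 0$. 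The remainder is handled by factoring it as (smooth kernel) $\circ$ (bounded) $\circ$ (smooth kernel) and invoking the composition lemma from \cite[Section 4.3]{oim}, which gives a kernel in $\biginf(k^{2n-m})$; letting $m\to\infty$ finishes. This is short because the Taylor expansion directly yields the graded decomposition.

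Your route replaces the explicit formula by a two-step procedure: first a Newton-like iteration $Q_{n+1}=p(Q_n)$ with $p(t)=3t^2-2t^3$ producing an approximate projector $Q$ (your computation of $p(t)(p(t)-1)=-(t^2-t)^2(3+4t-4t^2)$ and the resulting doubling $R_n \in \lag_{2^n}(A)$ is correct), and second a Riesz contour integral with parametrix $R(z)=z^{-1}I + Q[z(z-1)]^{-1}$ to identify $\chi(P)$ with this $Q$ plus lower-order corrections $T_\ell \in \lag_\ell(A)$. The parametrix computation is fine: with $P = Q+S$, $S\in\lag_1(A)$, one gets $(zI-P)R(z) = I + S(z)$ with $S(z) = (Q-Q^2)[z(z-1)]^{-1} - z^{-1}S - SQ[z(z-1)]^{-1} \in \lag_1(A)$ holomorphic on the contour, and each $T_\ell$ is a finite $\C$-linear combination (from residues at $z=1$) of $\ell$-fold products of operators in $\lag_1(A)$, possibly premultiplied by $Q$, hence is in $\lag_\ell(A)$. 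What you buy is flexibility: your argument does not use any special algebraic form of $\chi$ in terms of $x^2-x$, and would apply verbatim to any Borel idempotent-producing function.

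The one place you are light on detail, and where you ultimately need exactly the same technical lemma as the paper, is the upgrade from the norm estimate $\|\chi(P_k) - Q_k - \sum_{\ell\le N}T_{\ell,k}\| = \bigo(k^{-\infty})$ to the Schwartz-kernel statement. For the remainder
$$\frac{(-1)^{N+1}}{2\pi i}\oint_\gamma R(z)\,S(z)^{N+1}(I+S(z))^{-1}\,dz,$$
you must split it as $[R(z)S(z)^{N_1}]\cdot (I+S(z))^{-1}\cdot [S(z)^{N_2}]$ with $N_1+N_2 = N+1$, note that the outer factors have smooth kernels of size $\bigo(k^{n-N_1/2})$ and $\bigo(k^{n-N_2/2})$ while the middle factor has $\bigo(1)$ operator norm, and invoke the fact (smooth kernel) $\circ$ (bounded) $\circ$ (smooth kernel) has smooth kernel of size the product of the bounds — this is the same \cite[Section 4.3]{oim} fact the paper quotes. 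Once stated, your argument closes. Parity also goes through: $p$ has only even-degree terms $3t^2-2t^3$... actually $p$ is odd in degree but even/odd parity in $\lag^\pm$ is preserved under polynomials by Theorem \ref{theo:parity}, and $Q$ and $S$ inherit the parity of $P$, so all $T_\ell$ do too.
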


\cB An operator $P$ satisfying the assumptions exists by the surjectivity of
$\si_0$, cf. Theorem \ref{theo:lag}. \clb Theorem  \ref{theo:constr-proj} holds without the assumption that $F$ has a
definite parity. When $F$  does have a definite parity, $\pi$ is even, so
we can choose  $P \in \lag^{+}(A)$ with symbol $\pi$. 

\begin{proof}
To prove that the $\chi (P_k)$'s have smooth kernels, we will use the following
basic fact:  let $Q$, $Q'$ be two operators with smooth kernels acting on $\Ci (M,A)$ and $Q''$ be a bounded operator of $L^2(M, A)$. Then $Q Q''Q'$ has a
smooth kernel. This follows from the Schwartz theorem saying that
the operators with smooth kernel are the operators which can be continuously
extended $\mathcal{C}^{-\infty} \rightarrow \Ci $. We will also need 
the following pointwise norm estimates: consider families of operator $Q_k, Q'_k : \Ci
(M,L^k \otimes A) \rightarrow \Ci (M, L^k\otimes A)$ and $Q''_k : L^2 (M, L^k
\otimes A) \rightarrow L^2 (M, L^k \otimes A)$. Then by  \cite[Section
4.3]{oim}, if the Schwartz kernel
families of $(Q_k)$ and $(Q'_k)$ are respectively  in $\bigo (k^{-N})$ and
$\bigo (k^{-N'})$, and the operator norms of $Q_k^{''}$ are in $\bigo (1)$,
then the Schwartz kernel family of $Q_k Q''_k Q_k'$ is in $\bigo (k^{-(N
  +N)'})$. 

Back to our problem, we can write $\chi(P_k) = P_k \tilde{\chi} (P_k) P_k$ with $\tilde{\chi}
(x) = \chi(x) / x^2$. Since $ \tilde{\chi}(P_k)$ is bounded, this shows that $\chi (P_k)$
has a smooth kernel. This also shows that the Schwartz kernel family of $\chi
(P)$ is in
$\bigo (k^{2n})$.  To improve this, observe that $Q = P^2 - P$ is in $\lag
(A)$ and $\si _0 (Q) = \pi^2 - \pi =0$, so $\| Q_k \| = \bigo (k^{-1/2})$, which implies easily that $\frac{1}{2}$ is not in the spectrum of $P_k$ when $k$ is
sufficiently large, cf. \cite[Proposition 4.2]{oim}.

Now for $x \in \R \setminus \{ \frac{1}{2}\}$,
$y = x^2 -x > -1/4 $ and we have
$$ \chi (x) = x + (1-2x) f ( x^2 -x ) \quad \text{with } \quad  f (y) = \tfrac{1}{2}
( 1-  (1+ 4y)^{-1/2} )$$
\cA For any $m \in \N$, write the Taylor expansion of $f$ at $0$ at order $m $ as
follows: \clb $f (y) = \sum_{\ell =0}^m a_\ell
y^\ell + y^{m+1} f_m (y)$ with $f_m \in \mathcal{C}^0(]-\frac{1}{4},
\infty[, \R)$.  Then
\begin{gather} \label{eq:toutestla}
  \chi (P) =  P + \sum_{\ell = 0 }^m a_\ell (1-2P) Q^\ell + (1 - 2 P )
  Q^{m+1} f_m (Q).
\end{gather}
Now $\si_0(Q)=0$ implies that $Q^\ell$ and $PQ^{\ell}$ belong both to
$\lag_{\ell} ( A)$. Furthermore, $\|f_m(Q_k) \| = \bigo (1)$. 
Since $Q^{m+1} f_m (Q) = Q^m f(Q) Q$ and similarly $PQ^{m+1} f_m (Q) = PQ^m
f(Q) Q$, it follows from the preliminary observation  that the Schwartz
kernel family of $(1 - 2 P )
  Q^{m+1} f_m (Q)$ is in $\biginf ( k^{2n -m} )$.  We can now conclude easily
  the proof  from \eqref{eq:toutestla} by choosing at each step sufficiently large value of
  $m$:  first the
  Schwartz kernel family of $\chi (P)$ is in $\bigo (k^{-\infty})$ outside the diagonal and second the local expansions
\eqref{eq:expansion_kernel} hold. 
\end{proof}

\subsection{Toeplitz algebra} \label{sec:toeplitz-algebra}

Choose  a self-adjoint projector $\Pi \in \lag ^+ (A)$ with symbol $\pi$,
which exists by Theorem \ref{theo:constr-proj}. For any $k\in \N$, let
$\Hilb_k  = \op{Im} \Pi_k \subset \Ci ( M , L^k \otimes A)$. Computing
the trace of $\Pi_k$ by integrating its Schwartz kernel over the diagonal, we
deduce from the last assertion of Theorem \ref{theo:lag} that $\Hilb_k$ is finite dimensional
and \cB
\begin{gather} \label{eq:estim_dim}
\op{dim} \Hilb_k \sim \Bigl( \frac{k}{2 \pi} \Bigr)^n (\op{rank} F) \;
\op{vol} (M, \om) 
\end{gather}
As we will see later, when $k$ is sufficiently large, this dimension depends
polynomially on $k$ and is a Riemman-Roch number, cf. Theorem
\ref{theo:dim_general}. \clb

We will now  work with families of operators $(T_k \in \op{End} \Hilb_k$, $k \in
\N)$. Equivalently, we can consider that each $T_k$ acts on the larger space $\Ci ( M , L^k \otimes
A)$ and satisfies $\Pi_k T_k \Pi_k = T_k$. 
Define the space
\begin{gather} \label{eq:Toeplitz_Pi}
\mathcal{T} = \{ T \in \lag ^{+} (A) / \, \Pi T \Pi = T \} .
\end{gather}
For any $q \in \N$, set $\mathcal{T}_q  := k^{-q} \mathcal{T} =
\lag_{2q} (A) \cap \mathcal{T}  $ and $\mathcal{T}_{\infty} = \bigcap_q  \mathcal{T}_q $. Clearly,
$$  \mathcal{T}_\infty
\subset  \mathcal{T}_q
\subset \mathcal{T}_p \subset  \mathcal{T} $$ when $q \geqslant p$. 

\begin{theo} \label{theo:toeplitz} 
  \hspace{1em} 
  \begin{enumerate}
  \item For any $T \in \mathcal{T} $, $T$ belongs to $\mathcal{T}_q
    $ iff $\| T_k \| = \bigo ( k^{-q})$.
     Furthermore, $\mathcal{T}_{\infty}$ consists of the families $(T_k \in \op{End}
  (\Hilb_k), \, k \in \N^*)$ such that $\| T_k \| = \bigo(k^{-N})$ for any $N$.   
 \item 
  $ \mathcal{T} $ is closed under composition and taking adjoint: $ \bigl( \mathcal{T}_q \bigr)^* =  \mathcal{T}_q$ and 
  $\mathcal{T}_q  \cdot \mathcal{T}_p \subset \mathcal{T}_{q+p} $ for any $q$ and $p$.
 \item For any $q$, there exists a linear map $\tau_q : \mathcal{T}_q  \rightarrow
   \Ci ( M , \op{End} F ) $, which is onto, has kernel
   $\mathcal{T}_{q+1} $ and is determined by $\si_{2q} ( T) = \tau_q( T)
   \pi$.  Furthermore, if $P \in \mathcal{T}_q $ and $Q \in
   \mathcal{T}_p $, then
   \begin{xalignat}{2}
     \begin{split} &  \tau_q( P) = \tau_0(k^{q}P) , \qquad \tau_q( P ^*) =
   \tau_q( P)^* , \\ &
   \tau_q ( P ) \tau_p( Q) = \tau_{q+p} ( PQ)  \\
   & \| P_k \| = k^{-q} \bigl( \sup \{ \| \tau_q (
    P)_x \| , \; x \in M \} + \op{o} (1) \bigr),
 \end{split}
\end{xalignat}
and the restriction to the diagonal of the Schwartz kernel of $P_k$
   satisfies
   \begin{gather} \label{eq:trace_diag} 
 P_k (x,x) = \frac{k^{n-q}}{(2\pi)^n} \Bigl[ \op{tr} (\tau_q ( P) (x)) +
   \bigo (k^{-1}) \Bigr].
 \end{gather} 
\end{enumerate}
\end{theo}

Let us give more details on the equation $\si_{2q} ( T) = \tau_q( T)
   \pi$ defining the symbol map $\tau_q$. Recall that $\pi(x)$ is the
   orthogonal projector of $\D (T_xM) \otimes A_x$ onto $F_x$. 

Then for an endomorphism $s$ of $F_x$, we define $s
\pi(x) \in \symb (T_xM) \otimes \op{End} A_x $
as the endomorphism of $\D (T_xM) \otimes A_x$ sending $\psi $ into $s ( \pi(x) \psi)$.

\begin{proof} 1. The first assertion follows from Part 2 of Proposition \ref{prop:lag}. To establish the second assertion, we deduce from the first part of the
  proof of Theorem \ref{theo:constr-proj} that if a family $(P_k \in \op{End}
  (\Hilb_k))$ satisfies $\| P_k \| = \bigo ( k^{-\infty})$, then its Schwartz
  kernel is in $\bigo ( k^{-\infty})$ because $ \Pi_k P_k \Pi_k = P_k$ and
  the Schwartz kernel of $\Pi_k$ is in $\bigo (k^{n})$.

  Parts 2 and 3 follow
  from Theorem \ref{theo:lag} and the fact that $\lag ^{+} (A)$ is a
  subalgebra of $\lag (A)$ by theorem \ref{theo:parity}. To define $\tau_q
  (P)$, simply observe that $\Pi P \Pi = P$ implies that $\pi \si_{2q} (P) \pi
  = \si_{2q}(P)$, so we can write $\si_{2q} (P) = \tau_q(P)  \pi$ with $\tau_q
  (P)$ a section of $ \op{End} F$.
  The map $\tau_q$ is onto
  because for any section $s$ of $\op{End} F$, there exists $P$ in $ \lag_{2q} (A)$ with $\si_{2q} (P ) = s \pi$. Since $\pi (s \pi) \pi = s
  \pi$, we have $\si_{2q} ( \Pi P \Pi) = s \pi$ and clearly $\Pi P \Pi \in
  \mathcal{T}_q$.

  The kernel of $\tau_q$ is $\mathcal{T}_{q+1}$ because
  $\tau_q (P) =0$ implies that $\si_{2q} (P) =0$ so $P \in \lag^{+}_{2q+1} (A)$ so $\si_{2q+1} (P)$
  is odd by Theorem \ref{theo:parity}. But $\Pi P \Pi = P$ implies that $\si_{2q+1} ( P  ) = \pi \si_{2
    q+1} (P) \pi$. This implies that $\si_{2q+1} (P)$ is even because  $F$ has
  a definite parity. Indeed, if for instance $F \subset \D^+(TM) \otimes A$,
  and $f = \pi g \pi$ with $g \in \symb (T_xM) \otimes \op{End} A_x$, then
 in the decomposition $$\D (T_xM) \otimes A_x = (\D^+ (T_xM) \otimes A_x) \oplus ( \D^{-} (T_xM) \otimes
 A_x),$$ $f$ has the form $\begin{pmatrix} f_{++} & 0 \\ 0 & 0 
 \end{pmatrix}$, so $f$ is even.  Consequently, $\si_{2q+1}
  (P) = 0$ so $P \in \lag_{2q+2} (A)$.
The formulas giving the symbol of products, adjoints, the operator norm and the Schwartz kernel on
the diagonal follow directly from Theorem \ref{theo:lag}. \cB Observe that the
$\bigo ( k^{-\frac{1}{2}})$ in Assertion 5 of Theorem \ref{theo:lag} becomes a
  $\bigo ( k^{-1})$ in \eqref{eq:trace_diag} because $P$ being even, the
  restriction of the
  asymptotic expansion of its Schwartz kernel \eqref{eq:exp_Lag_bis} to the
  diagonal only involves integral powers of $k^{-1}$. \clb
\end{proof}

\begin{rem} \label{rem:odd_toeplitz}
We can consider as well {\em odd Toeplitz operators}, that is $T \in \lag^{-} (A)$ such that $T =
\Pi T \Pi$. The space of these operators is $k^{-\frac{1}{2}} \mathcal{T}$. Indeed, if $T $ is such
an operator, then its symbol $ \si_0 ( T) = \pi \si_0 (T) \pi$ is at the same
time even and odd because $F$ has a definite parity, so $\si_0(T) =0$ so $T
\in \lag_1 (A)$, so $k^{\frac{1}{2}} T \in \lag (A)$ and is even, so
$k^{\frac{1}{2}} T \in \mathcal{T}$.  \qed
\end{rem}

For any $f \in \Ci(M)$ and $k \in \N$, define the endomorphism $T_k(f)$ of
$\Hilb_k$ such that
$$ \langle T_k (f) \psi, \psi' \rangle = \langle f \psi , \psi'\rangle ,
\qquad \forall \psi, \psi' \in \Hilb_k .$$
Viewed as an operator of $\Ci ( M, L^k \otimes A)$, $T_k(f)$ is merely $\Pi_k
f \Pi_k$. It follows from part 2 of Theorem \ref{theo:lag} that the family
$(T_k(f))$ belongs to $\mathcal{T}$ and has symbol  $\tau_0 ( T_k (f))= f
\op{id}_{F}$. By part 4 of Theorem \ref{theo:toeplitz}, we deduce
that 
$$ T_k (f) T_k (g) = T_k (fg) + \bigo (k^{-1}).$$
A consequence of Theorem \ref{theo:unitary_equivalence} will be that
$$ [ T_k (f) , T_k (g) ] = i k^{-1} T_k ( \{ f,g \} ) + \bigo ( k^{-2})$$
with $\{ f, g \}$ the Poisson bracket of $f$ and $g$ with respect to $\om$. This equality
does not follow from Theorem \ref{theo:toeplitz}. However, \cB the center of
$\op{End} F_x$ consisting on the scalar multiple of the identity, \clb  the following
characterization of the Toeplitz operators having a scalar symbol follows from Theorem \ref{theo:toeplitz}: for any $P \in
\mathcal{T} $, 
$$ P = T (f) +\bigo (k^{-1}) \text{ for some } f \in \Ci (M) \, \Leftrightarrow \;
\forall Q \in \mathcal{T}, \, [P,Q ] \in \mathcal{T}_{1} .$$
By Jacobi identity, this proves that $[ T_k (f), T_k (g) ] = k^{-1} T_k (h) +
\bigo ( k^{-2})$ for some function $h \in \Ci(M)$. 

\subsection{A unitary equivalence}\label{sec:unitary-equivalence}

Consider an auxiliary vector bundle $B$ with an arbitrary rank. Set $F' = \D_0
(T M) \otimes B $ with $\D_0 (T_xM) \subset \D (T_xM)$ the subspace of constant
polynomials. Then by Theorem \ref{sec:constr-proj}, there  exists a projector $\Pi'$ in $\lag^{+}(B)$ having symbol
$$\si_0(\Pi') = \Uu_{00} \otimes \op{id}_{B} \in \Ci ( M,\symb(M) \otimes \op{End}
B),$$ where  $\Uu_{00} (x) \in \symb (T_xM)$ is the orthogonal projector of
$\D (T_xM)$ onto $\D_0 (T_xM)$, the notation being the same as in  \eqref{eq:def_Ualphabeta}.

Starting from $\Pi'$, we define $\Hilb_k' := \op{Im} \Pi'_k$ and the
corresponding Toeplitz space $\mathcal{T}' := \{ P
\in \lag^{+} (B)/ \; \Pi' P \Pi' = P \}$. Since $\D_0(TM) = \C$, $F' \simeq B$,
so the symbols of the Toeplitz operators of $\mathcal{T}'$ are sections of
$\op{End} B$:
$$ 0 \rightarrow \mathcal{T}'_{q+1} \rightarrow \mathcal{T}'_q
\xrightarrow{\tau'_q} \Ci ( M , \op{End} B) \rightarrow 0 .$$
Our goal now is to establish an equivalence between the $(\Hilb_k, \mathcal{T})$ 
and $(\Hilb_k', \mathcal{T}')$ when the bundle $B$ is  $F$. The
critical point is the existence of a convenient symbol. Recall our assumption that $F \subset
\D ^{\ep} (TM) \otimes A$ with $\ep \in \{ \pm 1 \}$. 

\begin{lemme} \label{lem:unitary-symbole}
  If $B = F$, then  there is a canonical symbol $\rho \in \Ci (M , \symb^{\ep} (M) \otimes \op{Hom} (A, B))$ such that $ \rho^* \rho = \pi $ and $\rho \rho ^* = \Uu_{00}  \otimes \op{id}_{B} .$
\end{lemme}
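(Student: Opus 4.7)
The plan is to construct $\rho$ as the canonical partial isometry associated with the embedding $F \subset \D(TM) \otimes A$. Specifically, I would define at each point $x \in M$
\[
\rho(x) : \D(T_xM) \otimes A_x \longrightarrow \D(T_xM) \otimes F_x, \qquad v \longmapsto 1 \otimes \pi(x) v,
\]
where $\pi(x) v$ is regarded as an element of $F_x$ (the target fibre) and $1 \in \D_0(T_xM) = \C$. This depends only on the subbundle $F$ and the Hermitian structures, hence is canonical; on the target side the image of $\rho(x)$ sits in $\D_0(T_xM) \otimes F_x \subset \D(T_xM) \otimes F_x$.

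To show that $\rho$ is a smooth section of $\symb^\ep(M) \otimes \op{Hom}(A, F)$, I would work locally in an orthonormal frame $(f_j)$ of $F$, together with orthonormal frames $(a_i)$ of $A$ and $((\al!)^{-1/2}\con{z}^\al)$ of $\D(TM)$ associated to a unitary frame of $TM$, and expand $f_j(x) = \sum_{\al, i} c_{j,\al,i}(x) (\al!)^{-1/2} \con{z}^\al \otimes a_i(x)$ with smooth coefficients. Computing $\pi(x) v = \sum_j \langle v, f_j(x)\rangle f_j(x)$ and substituting gives
\[
\rho(x) = \sum_{j,\al,i} \con{c_{j,\al,i}(x)}\, \Uu_{0\al}(x) \otimes E_{j,i}(x),
\]
where $E_{j,i}(x) \in \op{Hom}(A_x, F_x)$ sends $a_i(x)$ to $f_j(x)$ and kills the remaining basis vectors. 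Since $F \subset \D_{\leqslant m}(TM) \otimes A$, only $|\al| \leqslant m$ contribute, so the sum is finite and $\rho(x) \in \symb(T_xM) \otimes \op{Hom}(A_x, F_x)$. Smoothness follows from that of the $c_{j,\al,i}$'s. For the parity, $F \subset \D^\ep(TM) \otimes A$ forces $c_{j,\al,i} = 0$ unless $|\al|$ has parity $\ep$, and $\Uu_{0\al}$ itself has parity $|\al|$, so $\rho \in \symb^\ep(M) \otimes \op{Hom}(A, F)$.

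The two factorisation identities are then immediate verifications on vectors. Since $1$ is a unit vector in $\D(T_xM)$, for any $w \in F_x$ one has $\rho(x)^*(1 \otimes w) = w$, and $\rho(x)^*$ vanishes on $\D_0(T_xM)^\perp \otimes F_x$; hence $\rho(x)^* \rho(x) v = \rho(x)^*(1 \otimes \pi(x) v) = \pi(x) v$, giving $\rho^* \rho = \pi$. On the other side, for $g = 1 \otimes w$ with $w \in F_x$, $\rho(x) \rho(x)^* g = 1 \otimes \pi(x) w = 1 \otimes w$ because $w \in F_x$, while both $\rho(x)\rho(x)^*$ and $\Uu_{00}(x) \otimes \op{id}_{F_x}$ annihilate $\D_0(T_xM)^\perp \otimes F_x$; thus $\rho \rho^* = \Uu_{00} \otimes \op{id}_F$. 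No step presents a real obstacle; the substantive content is the identification of the canonical partial isometry $\rho = 1 \otimes \pi$, which is essentially dictated by the two required factorisations.
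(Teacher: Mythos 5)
Your construction $\rho(x)\colon v \mapsto 1 \otimes \pi(x)v$ is exactly the paper's map in disguise: restricted to $F_x$ it is $f \mapsto 1 \otimes f$, and it kills $F_x^{\perp}$, which is precisely the block-diagonal extension $\begin{pmatrix} \xi(x) & 0 \\ 0 & 0 \end{pmatrix}$ of the unitary $\xi(x): F_x \to \D_0(T_xM)\otimes F_x$ used in the paper. Your verification of smoothness via a local frame expansion, the parity argument through $\Uu_{0\al}$, and the two factorisation identities all match the paper's reasoning, just spelled out in slightly more detail; the proof is correct and follows essentially the same route.
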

\begin{proof} On the one hand, $\pi (x) $ is the
  orthogonal projector of $\D (T_xM) \otimes A_x$ onto $F_x$. On the other hand, $\pi'(x) := \Uu_{00} (x) \otimes \op{id}_{B_x}$ is the
  orthogonal projector of $\D (T_xM) \otimes B_x$ onto $\C \otimes B_x$. Since
  $B = F$, the images of $\pi (x)$ and $\pi'(x)$ are isomorphic by the map
  $ \xi (x) : F_x \rightarrow \op{Im} \pi '(x)$ sending $f$ into $1
\otimes f$. We define $\rho (x)$ as the extension of $\xi(x)$   
\begin{gather} \label{eq:def_rho}
  \rho (x) : F_x \oplus  F_x^{\perp} \rightarrow
(\op{Im} \pi'(x) ) \oplus  ( \op{Im} \pi'(x) )^{\perp} 
\end{gather}
having the block decomposition $\begin{pmatrix} \xi (x) & 0 \\ 0 & 0 
\end{pmatrix} $. So $\rho (x)$ is canonically defined.
The equalities  $ \rho(x)^* \rho(x) = \pi (x) $ and $\rho(x) \rho(x) ^* = \pi '(x)$ are 
easily verified  by using that $\xi
(x)$ is unitary. Writing $\rho $ in terms of a local frame of $F$, we see that
$\rho (x)$ depends smoothly on $x$. Finally, $F_x \subset \D^{\ep} (T_xM)
\otimes A_x$ and $\op{Im} \pi'(x) \subset \D^{+} (T_x M) \otimes B_x$, so
$\rho (x) \in \symb^{\ep} (M)_x \otimes \op{Hom} (A_x, B_x)$.   
\end{proof}

\begin{theo} \label{theo:unitary_equivalence}
Assume that $B = F$ and $\rho$ is the symbol defined above. Then there exists $U \in \lag ^{\ep}(A,B)$ with symbol $\si_0(U) = \rho$ and such that
\begin{gather} \label{eq:U}
  U^*_k U_k = \Pi_k, \qquad U_k U_k^* = \Pi'_k 
\end{gather} 
when $k$ is sufficiently large.  Modifying $\Pi_k'$ for a finite 
 number of $k$, we can choose $U$ so that \eqref{eq:U} holds for any $k$.
 In this case, the Toeplitz algebras $ \mathcal{T}$ and $\mathcal{T}'$ are
 isomorphic by the map sending $P$ into $U P U^* $.
 Furthermore, $P \in \mathcal{T}_q$ if and only if $UPU^* \in \mathcal{T}'_q$
 and when this is satisfied
 \begin{gather} \label{eq:symbol_iso}
   \tau'_q (U P U^* ) = \tau_q (P)  .  
 \end{gather}
\end{theo}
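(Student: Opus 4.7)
The plan is to first construct $U \in \lag^\ep(A,B)$ satisfying \eqref{eq:U} modulo smoothing operators, then promote to exact equalities via a polar-decomposition / functional-calculus argument, and finally verify the Toeplitz algebra isomorphism by direct symbol computation. To build an approximate $U$, I would start with any $U_0 \in \lag^\ep(A,B)$ having $\si_0(U_0) = \rho$ (which exists by the surjectivity statement in Theorem \ref{theo:parity}), and replace $U_0$ by $\Pi' U_0 \Pi$ to arrange $\Pi' U_0 \Pi = U_0$ without altering the principal symbol, since $\pi' \rho \pi = \rho$ follows from $\rho^*\rho = \pi$ and $\rho\rho^* = \pi'$. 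The symbols of $U_0^* U_0$ and $U_0 U_0^*$ then match those of $\Pi$ and $\Pi'$, so $R_0 := U_0^* U_0 - \Pi \in \lag_2^+(A)$ and $R_0' := U_0 U_0^* - \Pi' \in \lag_2^+(B)$. I would next iterate $U_{j+1} = U_j + S_j$ with $S_j \in \lag^\ep_{2j+2}(A,B)$ satisfying $\Pi' S_j \Pi = S_j$ and having symbol $s_j = -\tfrac{1}{2}\rho\,\si_{2j+2}(R_j)$. Using $\pi r = r\pi = r$ for $r := \si_{2j+2}(R_j)$ (from $\Pi R_j \Pi = R_j$ and self-adjointness), this cancels the next symbol of $U^*U - \Pi$. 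Crucially, the \emph{same} $S_j$ simultaneously cancels that of $UU^* - \Pi'$, because expanding $(U_j U_j^*)^2 = U_j (U_j^* U_j) U_j^*$ at principal level forces the algebraic compatibility $\si_{2j+2}(R_j') = \rho\, r\, \rho^*$. Borel summation via Proposition \ref{prop:lag} then produces $U_\infty \in \lag^\ep(A,B)$ with $\si_0(U_\infty) = \rho$, $\Pi' U_\infty \Pi = U_\infty$, and both $U_\infty^* U_\infty - \Pi$ and $U_\infty U_\infty^* - \Pi'$ lying in $\lag_\infty$.

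To reach exact equalities, for $k$ large $U_\infty^* U_\infty|_{\op{Im}\Pi_k}$ has spectrum clustered near $1$ and is hence invertible, and symmetrically for $U_\infty U_\infty^*|_{\op{Im}\Pi'_k}$; this makes $U_{\infty,k}$ restrict to a bijection $\op{Im}\Pi_k \to \op{Im}\Pi'_k$, forcing $\dim \op{Im}\Pi_k = \dim \op{Im}\Pi'_k$. Setting $B_k := (U_\infty^* U_\infty)^{-1/2}$ on $\op{Im}\Pi_k$ and $0$ on its orthogonal complement---equivalently $B_k = g(U_\infty^* U_\infty)$ for $g(x) = \chi(x)\, x^{-1/2}$ with $\chi$ as in Theorem \ref{theo:constr-proj}---an expansion of $g$ around $x = 1$ analogous to that proof shows $B \in \lag^+(A)$ with $\si_0(B) = \pi$ (and in fact $B \equiv \Pi$ modulo $\lag_\infty$). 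Then $U := U_\infty B$ lies in $\lag^\ep(A,B)$ with $\si_0(U) = \rho$, satisfies $U_k^* U_k = \Pi_k$ by construction, and is a partial isometry from $\op{Im}\Pi_k$ into $\op{Im}\Pi'_k$; equality of finite dimensions forces $U_k U_k^* = \Pi'_k$ for $k$ large, and for the finitely many exceptional $k$ we redefine $\Pi'_k := U_k U_k^*$.

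For the Toeplitz algebra statement, the four identities $\Pi' U = U$, $U\Pi = U$, $U^* U = \Pi$ and $U U^* = \Pi'$ make $P \mapsto U P U^*$ and $Q \mapsto U^* Q U$ mutual inverses between $\mathcal{T}$ and $\mathcal{T}'$, both preserving parity and the filtration because $U, U^* \in \lag^\ep_0$. For $P \in \mathcal{T}_q$ with $\si_{2q}(P) = \tau_q(P)\, \pi$, the symbol computation $\si_{2q}(U P U^*) = \rho\,\tau_q(P)\,\pi\,\rho^* = \rho\,\tau_q(P)\,\rho^*$ (using $\pi \rho^* = \rho^*$) coincides, under the canonical unitary $\rho|_F : F \xrightarrow{\sim} \op{Im}\pi'$ arising from $B = F$, with $\tau_q(P)\, \pi'$, yielding \eqref{eq:symbol_iso}. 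The main obstacle in this plan is the simultaneous iteration step: identifying the right ansatz for $S_j$ and establishing the compatibility $\si_{2j+2}(R_j') = \rho\,\si_{2j+2}(R_j)\,\rho^*$ that lets a single correction fix both $U^* U$ and $U U^*$ at each order. Once this is in place, the functional-calculus upgrade and the isomorphism verification are straightforward adaptations of arguments already developed in Sections \ref{sec:constr-proj} and \ref{sec:toeplitz-algebra}.
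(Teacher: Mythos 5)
Your proof is correct, but the iterative correction step in the middle is an unnecessary detour that the paper avoids entirely. The paper's proof takes $V := \Pi' W \Pi$ with $\si_0(W) = \rho$ and goes \emph{directly} to the polar-decomposition step: since $V^*V$ is a self-adjoint element of $\mathcal{T}$ with $\tau_0(V^*V) = \op{id}_F$, one has $V^*V = \Pi + Q$ with $Q \in \mathcal{T}_1$, i.e.\ $\|Q_k\| = \bigo(k^{-1})$; the operator $P_k := (\op{Id}_{\Hilb_k} + Q_k)^{-1/2}$ (extended by $0$) is shown to lie in $\mathcal{T}$ with symbol $\op{id}_F$ via the Taylor expansion of $(1+x)^{-1/2}$ at $x=0$, exactly the argument of Theorem~\ref{theo:constr-proj}, and $U := VP$ satisfies $U^*U = \Pi$ exactly. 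Your intermediate loop $U_{j+1} = U_j + S_j$, designed to push $U_\infty^*U_\infty - \Pi$ and $U_\infty U_\infty^* - \Pi'$ into $\lag_\infty$ before applying functional calculus, buys nothing: the same $(1+x)^{-1/2}$ expansion works already with $Q \in \mathcal{T}_1$, and the identity $UU^* = \Pi'$ follows from the invertibility of $U_k : \Hilb_k \to \Hilb_k'$ (hence equal dimensions), which only needs the $\bigo(k^{-1})$ approximation. Your compatibility observation $\si_{2j+2}(R_j') = \rho\, \si_{2j+2}(R_j)\, \rho^*$, derived from $(U_jU_j^*)^2 = U_j(U_j^*U_j)U_j^*$, is algebraically correct and pleasant, but it is solving a problem you did not need to pose—and in fact you never use $U_\infty U_\infty^* - \Pi' \in \lag_\infty$ in the subsequent functional-calculus step, only the $U^*U$ side. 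Your final symbol computation $\si_{2q}(UPU^*) = \rho\,\tau_q(P)\,\pi\,\rho^*$ is the correct orientation (the paper's displayed formula $\si_0(UPU^*) = \rho^*\si_0(P)\rho$ appears to transpose the two factors, a harmless slip), and the identification via the canonical unitary $\xi : F \to \op{Im}\pi'$ matches the paper's Lemma~\ref{lem:unitary-symbole}. In short: the result and the crucial ingredients (projection $\Pi'W\Pi$, functional-calculus inverse square root, dimension count) coincide with the paper's, but the order-by-order correction can be deleted outright to obtain the paper's shorter argument.
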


\begin{proof}
Choose $W \in \lag^{\ep} (A,B)$ with symbol $\rho$ and set $V := \Pi' W \Pi$. Then $V
\in \lag^{\ep} (A,B)$ with $ \si_0(V) = \pi' \rho \pi = \rho$ and since $\rho^* \rho =
\pi$ and $\rho \rho^* = \pi'$, we have
\begin{gather}  \label{eq:first_Step}
V^*_k V_k = \Pi_k + \bigo (k^{-\frac{1}{2}}), \qquad V_k V^*_k = \Pi_k' + \bigo (
k^{-\frac{1}{2}} ) 
\end{gather}
So $V_k$, viewed as an operator from $\Hilb_k$ to $\Hilb_k'$, is invertible
when $k$ is sufficiently large. 

Observe also that $V^*V$ is a Toeplitz
operator of $\mathcal{T}$ with symbol $\op{id}_{F}$. So $V^*V = \Pi + Q$ with $Q \in
\mathcal{T}_1$. Since $\| Q_k\| = \bigo (k^{-1})$, the spectrum of $Q_k$ is
contained in $[-\frac{1}{2}, \frac{1}{2} ]$ when $k$ is sufficiently large.
Modifying $Q_k$ for a finite number of $k$, we can assume this holds for any
$k$, and when $k$ is sufficiently large, we still have $V_k^*V_k = \Pi_k + Q_k$. 
Let $P_k$ be the endomorphism of $\Ci ( M , L^k \otimes A$) which is zero on
$\Hilb_k^{\perp}$ and equal to $(\op{Id}_{\Hilb_k} + Q_k ) ^{-1/2}$ on
$\Hilb_k$. We claim that $(P_k)$ belongs to $\mathcal{T}$ and has
symbol $\op{id}_{F}$. 

Assuming this temporarily, it follows that $U_k := V_k P_k $ belongs
to $\lag ^{\ep} (A,B)$, has symbol $\rho$ and satisfies when $k$ is sufficiently
large   $U_k^* U_k = P_k V_k^* V_k P_k = \Pi_k$. Since $U_k$, viewed as an
operator from $\Hilb_k$ to $\Hilb'_k$ is invertible, this also implies that
$U_kU_k^* = \Pi'_k$.    

To prove the claim above, we write the Taylor expansion $(1+x)^{-1/2} = 1 +
\sum_{\ell = 0 }^m a_\ell x^\ell + x^{m+1} f_m (x)$ with $f_m$ a continuous
function $[-\frac{1}{2},\frac{1}{2}] \rightarrow \R$. Then 
\begin{gather} \label{eq:racine}
 P_k =  \Pi_k + \sum_{\ell = 0 }^m a_\ell Q_k^{\ell} + Q_k^{m+1} f_m (Q_k)
.
\end{gather}
Then we show that $P$ belongs to $\lag ^+ (A)$ by arguing as in the proof of
Theorem \ref{theo:constr-proj}:  $Q^\ell \in \mathcal{T}_\ell $ and $\| f_m (
Q_k) \| = \bigo (1)$, so the Schwartz kernel family of $Q_k^{m+1} f_m (Q_k) =
Q_k^m f_m (Q_k) Q_m$ is in $\bigo ( k^{2n - m-1 })$. Choosing $m$
sufficiently large at each step,  we then deduce from \eqref{eq:racine} that
the Schwartz kernel family of $P_k $  is $\bigo (
k^{-\infty})$ outside the diagonal and  that the local expansions
\eqref{eq:expansion_kernel} hold.

So we have proved the existence of $U \in \lag^{\ep}(A,B)$ with $\si_0(U) = \rho$
and satisfying \eqref{eq:U} for any $k$ except a finite set. For the missing
$k$'s, we modify $\Pi_k'$ by choosing any subspace
$\Hilb'_k$ of $\Ci ( M , L^k \otimes B)$ having the same dimension as 
$\Hilb_k$, define $\Pi_k'$ as the orthogonal projector onto $\Hilb_k'$ and
$U_k$ as any isometry $\Hilb_k \rightarrow \Hilb'_k$ extended to zero on
$\Hilb_k^{\perp}$. Then $\Pi_k'$ and $U_k$ have a smooth Schwartz kernel, so
the new families $\Pi '$ and $U$ are still in $\lag ^{+} (B)$ and $\lag ^{\ep}
(A,B)$ respectively.

It is now easy to prove the last assertion: if $P \in \lag ^{+} (A)$, then $U
P U^* \in \lag ^{+}(B)$ because $U \in \lag ^{\ep} (A,B)$ and $U^* \in
\lag^{\ep} (B,A)$. If $ \Pi P \Pi = P$, then $\Pi' (U P U^* ) \Pi'
= U P U^*$ by \eqref{eq:U}. So $P \in \mathcal{T}$ implies that $U P U^*  \in
\mathcal{T}'$, which defines an isomorphism from $\mathcal{T}$ into
$\mathcal{T}'$ because we can invert it by sending $Q$ into $U^* Q U$.  Furthermore, $\si_0 ( UPU^* ) = \rho^* \si_0 ( P)
\rho$ which leads to \eqref{eq:symbol_iso}. 
\end{proof}

A first corollary is the computation of the symbols of commutators in terms
of Poisson bracket. Recall the Toeplitz operators $T_k (f) : \Hilb_k
\rightarrow \Hilb_k$ associated to $f \in \Ci ( M)$. Define similarly $T'_k
(f) : \Hilb'_k \rightarrow \Hilb_k'$.

\begin{cor}
$ [ T_k (f) , T_k (g) ] = i k^{-1} T_k ( \{ f,g \} ) + \bigo ( k^{-2})$ for
any $f , g \in \Ci (M)$. 
\end{cor}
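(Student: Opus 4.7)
The plan is to transfer the commutator problem, via the unitary equivalence of Theorem \ref{theo:unitary_equivalence}, to the ``standard'' twisted Toeplitz algebra $\mathcal{T}'$ associated to the projector $\Pi'$ with symbol $\Uu_{00}\otimes \op{id}_F$, for which the Poisson-bracket formula is already known. Concretely, I would apply Theorem \ref{theo:unitary_equivalence} with the auxiliary bundle $B = F$ to produce $U \in \lag^{\ep}(A,F)$ conjugating $\mathcal{T}$ isomorphically onto $\mathcal{T}'$. Since $\tau'_0(UT_k(f)U^*) = \tau_0(T_k(f)) = f\,\op{id}_F = \tau'_0(T'_k(f))$, the difference $S_f := UT_k(f)U^* - T'_k(f)$ belongs to $\mathcal{T}'_1$, and likewise for $g$.

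Expanding $U[T_k(f),T_k(g)]U^* = [T'_k(f)+S_f, T'_k(g)+S_g]$, the main observation is that every cross term is in $\mathcal{T}'_2$. For instance $[S_f,T'_k(g)] \in \mathcal{T}'_1$ a priori, but its symbol satisfies $\tau'_1([S_f,T'_k(g)]) = [\tau'_1(S_f),\, g\,\op{id}_F] = 0$ because the scalar $g$ commutes with any endomorphism of $F$; hence by Theorem \ref{theo:toeplitz} this commutator actually lies in $\mathcal{T}'_2$. The same applies to $[T'_k(f),S_g]$, while $[S_f,S_g]$ is in $\mathcal{T}'_2$ by norm bounds alone. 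Consequently
$$ U[T_k(f),T_k(g)]U^* = [T'_k(f),T'_k(g)] + \bigo(k^{-2}). $$
Using also that $U T_k(\{f,g\}) U^* - T'_k(\{f,g\}) \in \mathcal{T}'_1$ (so that multiplying by $k^{-1}$ makes the difference $\bigo(k^{-2})$), it suffices to prove the claim for $\mathcal{T}'$ itself.

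The reduced statement $[T'_k(f),T'_k(g)] = ik^{-1}T'_k(\{f,g\}) + \bigo(k^{-2})$ is the classical twisted Berezin-Toeplitz commutator formula for the quantization $\Hilb'_k$ whose projector has symbol $\Uu_{00}\otimes \op{id}_F$; this can either be invoked from the established literature \cite{BoUr96,BoMeSc,MaMa,oim_op}, or proved directly by testing $T'_k(f)T'_k(g)$ against the peaked sections $\Phi^1_k$ of Proposition \ref{prop:peaked-sections} at each point $x \in M$ and computing the subprincipal term as $B_1(f,g) = -\tfrac12 g(X_f,X_g) + \tfrac{1}{2i}\om(X_f,X_g)$, whose antisymmetrization is $\tfrac{1}{i}\om(X_f,X_g) = -i\{f,g\}$. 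The main obstacle is precisely this last step: the symbol calculus of Theorem \ref{theo:toeplitz} only controls the principal symbol of products and commutators, and is blind to the subprincipal correction that produces the Poisson bracket; the role of Theorem \ref{theo:unitary_equivalence} is to reduce the general (possibly endomorphism-valued) case to this scalar situation, where the second-order analysis is standard.
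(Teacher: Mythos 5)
Your proposal is correct and follows essentially the same route as the paper: reduce via the conjugation $U\cdot U^*$ of Theorem \ref{theo:unitary_equivalence} to the scalar-symbol Toeplitz algebra $\mathcal{T}'$ and then invoke the known commutator estimate there (the paper cites \cite{oim} for a projector in the subalgebra $\mathcal{A}(B)$, which is always available). Your explicit cross-term computation showing $[S_f, T'_k(g)] \in \mathcal{T}'_2$ because $\tau'_1([S_f,T'_k(g)]) = [\tau'_1(S_f), g\,\op{id}_F]=0$ is exactly what justifies the paper's opening sentence ``This amounts to show that \dots $\tau_1([T,S]) = i\{f,g\}\op{id}_F$'' for any $T,S$ with those scalar symbols, which the paper states without spelling out.
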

Another proof will be provided in Proposition \ref{prop:subsymbolic}.

\begin{proof} This amounts to show that for any two Toeplitz operators $T$, $S$ of
  $\mathcal{T}$ with symbol $\tau_0(T) = f \op{id}_F$, $\tau_0(S) = g
  \op{id}_F$, we have $\tau_1 ( [T,S] ) = i \{ f, g\} \op{id}_F$. By Theorem
  \ref{theo:unitary_equivalence}, this holds for $\mathcal{T}$ if and only if
  this holds for $\mathcal{T}'$. \cB The results for $\mathcal{T}'$ has been
  proved in \cite[Theorem 1.4]{oim}, when the projector is chosen as in
  \cite[Theorem 1.1]{oim}.\clb
\end{proof}

The operators $T'_k(f)$ are defined not only for $f \in \Ci (M)$ but also for
$f \in \Ci ( M, \op{End} B)$. Since $\tau_q'( k^{-q} T_k' (f)) = f$, it follows
that we can define the Toeplitz operators of $\mathcal{T}'$ as the families
$(T_k)$ such that for any $N$,
$$ T_k = \sum_{\ell =0 }^{N} k^{-\ell} T_{k} (f_\ell ) + \bigo ( k^{-(N+1)})$$
for a sequence $(f_\ell)$ of $\Ci ( M, \op{End} B)$. This provides a
definition of $\mathcal{T}'$ without any reference to the algebra $\lag ^{+}
(B)$. Observe also that the coefficients $f_{\ell}$ are uniquely determined
by $T$ and the map $\mathcal{T}' \rightarrow \Ci ( M , \op{End} B) [[\hbar]]$
sending $T$ into $\sum \hbar^\ell f_{\ell}$ is a full symbol map, meaning that
it is onto and its kernel is $\mathcal{T}' \cap \bigo ( k^{-\infty})$. This
full symbol map can also be used to get uniform control of the product of
Toeplitz operators, cf. \cite{oim}. But unfortunately, this does not hold for
$\mathcal{T}$, except in the particular case where $F$ has rank one, so that
$\op{End} F \simeq \C$. This happens in particular for higher Landau level in
dimension $n=1$. 

A second consequence of Theorem \ref{theo:unitary_equivalence} is the
computation of the dimension of our quantum spaces. \cB Here the parity assumption
is not necessary. 

\begin{theo} \label{theo:dim_general}
  Let $\Pi \in \lag (A)$ be a projector whose symbol $\pi = \si_0 ( \Pi)$  has a constant
  rank. Then the dimension of $\Hilb_k= \op{Im}( \Pi_k)$ is
$$  \op{dim} \Hilb_k = \int_M \op{ch} ( L^k \otimes F ) \; \op{Td}
(M)$$
when $k$ is sufficiently large, where $F$ is the subbundle of $\D (TM) \otimes
A$ given by $F_x = \op{Im} \pi (x)$ for any $x \in M$. 
\end{theo} \clb

\begin{proof}
  We introduce a new family $(\Hilb''_k:= \op{Ker} D_k)$ where  $D_k$
  is the spin-c Dirac operator acting on $\Ci ( M , L^k \otimes B \otimes
  S)$ with \clr $S :=\bigwedge (T^*M)^{0,1} $ \clb the spinor bundle. By the
  Atiyah-Singer Theorem and a vanishing theorem \cite{BoUr96}, \cite{MaMa02}, the dimension of $\Hilb''_k$  is given by the Riemann-Roch number of $L^k \otimes B$ when $k$ is sufficiently large. We claim
  that the projector $\Pi''_k$ of $\Ci ( M , L^k \otimes B \otimes
  S)$ onto $\Hilb_k''$ belongs to $\lag ^+ ( B \otimes S)$ and has symbol $\Uu_{00}
  \otimes p_B$ where $p_B $ is the section of $\op{End} ( B \otimes S)$ equal
  at each $x\in M$ to the projector of $B_x \otimes S_x$ onto $B_x \otimes
  \C$.  This is actually a reformulation of results by Ma and Marinescu \cite{MaMa}, as is
  explained in \cA \cite[Appendix A]{oim}. \clb  Alternatively this follows from the
  companion paper \cite{oim_copain}.

  Now the image of the symbol $\rho_{00} \otimes p_B$ is isomorphic with $B$,
  so by Theorem \ref{theo:unitary_equivalence}, when $k$ is sufficiently
  large, $\Hilb''_k$ has the same
  dimension as $\Hilb_k' = \op{Im} \Pi'_k$, where $\Pi_k'$ is any self-adjoint
  projector of $\mathcal{L}^+(B)$ with symbol $ \rho_{00} \otimes \op{id}_B$.

  To conclude, when $\Pi$ is even, by another application of theorem \ref{theo:unitary_equivalence}, for
  $B=F$, 
  $\Hilb'_k$ and $\Hilb_k$ have the same dimension when $k$ is sufficiently
  large. \cB The same proof works for $\Pi$ not being necessarily even. Actually,
  the existence of $V$ satisfying \eqref{eq:first_Step} already implies that the
  dimensions of $\Hilb_k$ and $\Hilb_k'$ are the same when $k$ is large. \clb
\end{proof}

\section{Landau Hamiltonian algebra} \label{sec:landau-levels-cn}

In this section, we come back to the algebra $\symb (\C^n)$ introduced in Section
\ref{sec:symbol-spaces}. We extend the action of the elements of $\symb (\C^n)$
on $\D ( \C^n)$ to the complete polynomial space and we compute the corresponding Schwartz
kernel. This will be used in the sequel to give an intrinsic definition of the
symbol maps $\si_q$, cf. Definition \ref{def:symbol}, and to understand the
composition properties of the class $\lag (A,B)$.

Let $\pol ( \C^n)$ be the space of polynomials map from $\C^n$ to $\C$, so any $f
\in \pol ( \C^n)$ has the form $ f = \sum a_{\al \be} z^ \al \con{z}^\be$
where the sum is finite and the $a_{\al \be}$ are complex numbers. The space
$\D ( \C^n)$ introduced in Section
\ref{sec:symbol-spaces} is the subspace of $\pol ( \C^n)$ of antiholomorphic
maps.
We endow $\pol (
\C^n)$ with the same scalar product
\begin{gather} \label{eq:scal_prod_2}
\langle f, g \rangle = (2\pi)^{-n} \int_{\C^n} e^{-|z|^2} f (z) \con{g (z)} \;
d \mu_n (z) 
\end{gather}
as in \eqref{eq:scal_product_bargm} for $\D (
\C^n)$. The family $(( (\al + \be )! )^{-\frac{1}{2}} z^{\al} \con{z}^\be, \;
\al , \be \in \N^n )$ is an orthonormal basis of $\pol ( \C^n)$.

For any $i
=1, \ldots, n  $, introduce the endomorphism $a_i = \partial_{\con{z}_i}$ and
its adjoint $a_i^{*} = \con{z}_i - \partial_{z_i}$. They satisfy the bosonic
commutation relations
$$ [ a_i ,
a_j ] = [a_i^{*}, a_j^{*} ] = 0 , \qquad [a_i , a_j^{*}] =
\delta_{ij}. $$   
So the $a_i^{*} a_i$'s are mutually commuting Hermitian endomorphisms. Their eigenspaces are the Landau levels of $\C^n$. 
In the sequel, we use the notation $a^\al := a_1^{\al(1)} \ldots a_n^{\al (n)}$
and $(a^{*})^{\al} := (a_1^*)^{\al(1)} \ldots (a_n^* )^{\al (n)}$.

\begin{prop} \label{prop:linearlandau}
  $ $ \begin{enumerate}
     \item For $i=1, \ldots, n$, $a_i^* a_i$ is diagonalisable with spectrum
  $\N$. So we have a decomposition into mutually orthogonal joint eigenspaces $ \pol ( \C^n) = \bigoplus_{\al \in
    \N^n} \mathcal{L}_\al$ with  $\mathcal{L}_{\al} = \bigcap_{i=1}^{n} \op{ker} (  a_i^* a_i - \al
  (i) ).$
\item  $\mathcal{L}_0 = \C [z_1, \ldots ,z_n]$ and for any $\al \in \N^n$, $
  \mathcal{L}_{\al} =  (a^*)^\al \mathcal{L}_0 $.
  \item For any $\al , \be \in \N^n$, let $\Vvv_{\al \be} := ( \al ! \be
    !)^{-\frac{1}{2}} (a^{*})^{\al} \Vvv_{00} a ^{\be}$ with $\Vvv_{00} $ the orthogonal projector of $\pol
    ( \C^n)$ onto $\mathcal{L}_0$. Then
    \begin{enumerate}
        \item $\Vvv_{\al\be}$ is zero on the $\mathcal{L}_{\ga}$'s with $\ga \neq
          \be$ and restricts to a unitary isomorphism from $\mathcal{L}_{\be}$
          to $\mathcal{L}_{\al}$
          \item  $\Vvv_{\al \al}$ is the orthogonal projector onto
    $\mathcal{L}_\al$. 
      \item  $ \Vvv_{\al \be} \circ \Vvv_{\tilde{\al} \tilde{\be}} =
        \delta_{\be \tilde{\al}} \Vvv_{\al \tilde \be} $  and $\Vvv_{\al \be}
        ^{*} = \Vvv_{\be \al} $ 
  \end{enumerate}
\end{enumerate}
\end{prop}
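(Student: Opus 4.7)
I will use the standard Fock-space strategy, adapted to the fact that the ``vacuum'' $\mathcal{L}_0$ is infinite-dimensional. First identify $\mathcal{L}_0$; then construct candidate eigenspaces $\mathcal{L}_\alpha = (a^*)^\alpha \mathcal{L}_0$; then conclude by combining orthogonality of spectral subspaces with a triangularity/spanning argument to identify them as exactly the joint eigenspaces. The key algebraic input throughout is the commutation relation $[a_i, a_j^*] = \delta_{ij}$, which implies $[a_i, (a_j^*)^k] = k\delta_{ij}(a_j^*)^{k-1}$.

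Since $a_i = \partial_{\con{z}_i}$, the joint kernel $\bigcap_i \ker a_i$ is exactly $\C[z_1, \dots, z_n]$, and this space is contained in $\bigcap_i \ker(a_i^* a_i)$, so $\mathcal{L}_0 \supset \C[z]$. For $g \in \C[z]$, the commutation relation together with $a_i g = 0$ gives $a_i (a^*)^\alpha g = \alpha(i)(a^*)^{\alpha - e_i} g$ by a short induction on $\alpha$, hence $a_i^* a_i (a^*)^\alpha g = \alpha(i)(a^*)^\alpha g$. This shows $(a^*)^\alpha \C[z] \subseteq \mathcal{L}_\alpha$ for every $\alpha \in \N^n$.

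Orthogonality of the $\mathcal{L}_\alpha$'s is immediate from the self-adjointness of $a_i^* a_i$: if $\alpha \neq \beta$, picking $i$ with $\alpha(i) \neq \beta(i)$ yields $\alpha(i)\langle f, g\rangle = \langle a_i^* a_i f, g\rangle = \beta(i)\langle f, g\rangle$ for $f \in \mathcal{L}_\alpha$, $g \in \mathcal{L}_\beta$. For spanning, I claim the family $\{(a^*)^\alpha z^\beta : \alpha,\beta \in \N^n\}$ is a basis of $\pol(\C^n)$: expanding $a_i^* = \con{z}_i - \partial_{z_i}$ gives $(a^*)^\alpha z^\beta = \con{z}^\alpha z^\beta + (\text{terms of strictly smaller } \con{z}\text{-degree})$, which is unitriangular with respect to the monomial basis ordered by $\con{z}$-degree. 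Consequently $\sum_\alpha (a^*)^\alpha \C[z] = \pol(\C^n)$, and combined with orthogonality this forces $\mathcal{L}_\alpha = (a^*)^\alpha \C[z]$ and $\mathcal{L}_0 = \C[z]$ exactly. Each $a_i^* a_i$ is therefore diagonalisable with spectrum $\N$, proving (1) and (2).

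For (3), iterating the commutator relation yields the basic formula
\[ a^\beta (a^*)^\gamma g = \frac{\gamma!}{(\gamma-\beta)!}(a^*)^{\gamma-\beta} g \text{ if } \beta \leqslant \gamma, \qquad a^\beta (a^*)^\gamma g = 0 \text{ otherwise}, \]
for $g \in \mathcal{L}_0$. Applying $\Vvv_{00}$ to the right-hand side kills it unless $\gamma - \beta = 0$, in which case the output is $\beta! \, g$. Hence $\Vvv_{\alpha\beta}$ vanishes on $\mathcal{L}_\gamma$ for $\gamma \neq \beta$, and on $\mathcal{L}_\beta$ it sends $(a^*)^\beta g$ to $(\beta!/\alpha!)^{1/2}(a^*)^\alpha g$. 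Combined with the norm identity $\|(a^*)^\alpha g\|^2 = \langle g, a^\alpha (a^*)^\alpha g\rangle = \alpha! \|g\|^2$, this shows $\Vvv_{\alpha\beta}|_{\mathcal{L}_\beta}: \mathcal{L}_\beta \to \mathcal{L}_\alpha$ is unitary, proving (3)(a); setting $\alpha = \beta$ gives (3)(b). The composition rule (3)(c) then follows by matching indices in the same formula, and the adjoint identity $\Vvv_{\alpha\beta}^* = \Vvv_{\beta\alpha}$ follows directly from the defining expression $\Vvv_{\alpha\beta} = (\alpha!\beta!)^{-1/2}(a^*)^\alpha \Vvv_{00} a^\beta$ using $\Vvv_{00}^* = \Vvv_{00}$ and $(a_i)^{**} = a_i$. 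No step is genuinely hard; the only care required is in tracking factorial normalisations in part (3).
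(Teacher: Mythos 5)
Your proof is correct, and the overall Fock-space strategy is the same as the paper's (identify $\mathcal{L}_0 = \ker a = \C[z]$, build the higher levels with $(a^*)^\alpha$, exploit $[a_i,a_j^*]=\delta_{ij}$). The structural difference is in how the identity $\mathcal{L}_\alpha = (a^*)^\alpha\C[z]$ and the direct-sum decomposition are established. The paper reduces to $n=1$ and runs an induction on $m$ to prove $\ker(a^*a-m)=(a^*)^m\C[z]$ directly, then shows $\pol(\C)$ is spanned by the $\mathcal{L}_m$ via the filtration $\mathcal{F}_m = \bigoplus_{\ell\leqslant m}\con{z}^\ell\C[z]$. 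You instead work directly in $n$ variables, prove only the containment $(a^*)^\alpha\C[z]\subseteq\mathcal{L}_\alpha$, prove that the $(a^*)^\alpha z^\beta$ span $\pol(\C^n)$ by a triangularity argument (which is really the same filtration idea), and then upgrade the containment to equality by invoking mutual orthogonality of the joint eigenspaces. Your route avoids the induction and the reduction to $n=1$ at the cost of a small linear-algebra step at the end; the paper's route is a cleaner "one level at a time" argument but needs the throwaway remark that the extension to $n>1$ is straightforward. Both are fine. One cosmetic point: the phrase "unitriangular with respect to the monomial basis ordered by $\con{z}$-degree" deserves to be spelled out as a filtration or graded argument, since $\con{z}$-degree is only a partial order on monomials; this is exactly what the paper's $\mathcal{F}_m$ does. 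Your treatment of part (3) via the explicit identity $a^\beta(a^*)^\gamma g = \frac{\gamma!}{(\gamma-\beta)!}(a^*)^{\gamma-\beta}g$ on $\mathcal{L}_0$ is equivalent to the paper's statement that $a^*\colon\mathcal{L}_m\to\mathcal{L}_{m+1}$ is invertible with inverse $(m+1)^{-1}a$; both recover the same factorial bookkeeping.
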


\begin{proof} The result is certainly standard in condensed matter theory. For the convenience of the reader, we explain briefly the proof
  for $n=1$. The extension in higher dimension is straightforward. We write
  $a:= a_1$, recall the commutation relation $ [a , a^{*} ]= 1 $ and set  $\mathcal{L}_m := (a^{*})^m (\C[z])$ for any $m \in \N$.  

  We check by induction that $\mathcal{L}_m = \ker ( a^* a - m)$. 
  First,  writing $\langle a^{*} a f, f \rangle = \| a
  f \|^2 $, it comes that $\ker a^* a = \ker a =
  \mathcal{L}_0$.
  Assume now that $\mathcal{L}_m = \ker ( a^* a - m)$. By the commutation relation, $f \in \mathcal{L}_m$ implies that $a^{*} a a ^{*} f =
  (m+1) a^*  f$, so $\lan_{m+1} \subset \ker ( a^* a -
  (m+1))$. Conversely, by the commutation relation again,  $a^* a f = ( m+1) f $ implies that $ (a^* a
  ) a f = m a f$ so $ af \in \lan_m$ and $f = (m+1)^{-1} a^* (af) \in
  \lan_{m+1}$. 

  To conclude that $a^* a $ is diagonalizable with eigenvalues in $\N$,
  it suffices to prove that $\pol ( \C)$ is spanned by the $\lan_m$.
  Introduce the filtration $\mathcal{F}_m := \oplus _{\ell=0}^{m} \con{z}^\ell
  \C[z]$, $m \in \N$. If $f \in \C [z]$, then $\con{z}^m f  =
  (a^{*})^m f \mod \mathcal{F}_{m-1}$. So $\mathcal{F}_m = \lan_m +
  \mathcal{F}_{m-1} = \ldots = \lan_m + \lan_{m-1} + \ldots + \lan_0$ by
  reiterating.

  So we have proved that $\pol ( \C) = \bigoplus \lan_m$ with $\lan_m = \ker (
  a^* a - m)$, which shows the first and second assertions of the proposition.  By the
  commutation relation, $a
  a^{*} = (m+1)$ on $\lan_m$. So  $a^{*} : \lan_m \rightarrow \lan_{m+1}$ is invertible with inverse
  $(m+1)^{-1} a: \lan_{m+1} \rightarrow \lan_{m}$. We conclude
  that \begin{itemize}
    \item if $p \leqslant
  m$, then $a^p$ restricts to an isomorphism from $\lan_m$  to $\lan_{m-p}$,
  whose inverse is the restriction of $\frac{(m-p)!}{m!} (a^{*})^m$ to $\lan_{m-p}$. 
 \item if $p > m$, then $a ^p ( \lan_m) = \{0\}$. 
 \end{itemize}
 With these two facts, we easily check the third assertion.
\end{proof}

By the last assertion of Proposition \ref{prop:linearlandau}, the space
$\widetilde{\symb} (\C^n)$ of endomorphisms of $\pol (
\C^n)$ generated by the $\Vvv_{\al \be}$'s is closed under composition, so it is an
algebra. By the following proposition, $\widetilde{\symb} (\C^n)$ is
isomorphic with the algebra $\symb (\C^n)$ introduced in Section
\ref{sec:symbol-spaces}, through the map sending $\Vvv_{\al\be}$ into
$\Uu_{\al\be}$. 

\begin{prop} The elements of $\widetilde{\symb} (\C^n)$ preserve the
  subspace $\D ( \C^n)$ of $\pol ( \C^{n})$. Furthermore, the restriction map
  $\op{res} : \widetilde{\symb} (\C^n) \rightarrow \op{End} ( \D ( \C^{n}) )$ is
  injective, with image $\symb ( \C^n)$ and $\op{res} ( \Vvv_{\al \be} ) = \Uu_{\al
    \be}$.
\end{prop}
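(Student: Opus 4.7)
The plan breaks into three steps: (i) show each generator $\Vvv_{\al\be}$ preserves $\D(\C^n)$; (ii) identify the restriction with $\Uu_{\al\be}$ by an explicit calculation on the monomial basis; (iii) deduce injectivity and the description of the image.

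For step (i), I would first observe that on $\D(\C^n) = \C[\con z_1,\ldots,\con z_n]$ the annihilator $a_i = \partial_{\con z_i}$ trivially preserves the subspace, while the creator $a_i^* = \con z_i - \partial_{z_i}$ reduces to multiplication by $\con z_i$ (as $\partial_{z_i}$ kills antiholomorphic polynomials). The key point is the behaviour of $\Vvv_{00}$. Using the commutation relation $[a_i^* a_i, a_j^*] = \delta_{ij}\, a_j^*$, the operator $(a^*)^\be$ shifts the joint eigenvalue by $\be$, so from $1 \in \lan_0$ one obtains $\con z^\be = (a^*)^\be \cdot 1 \in \lan_\be$. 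For $\be \neq 0$ this vector is orthogonal to $\lan_0$, hence $\Vvv_{00}(\con z^\be) = 0$, while $\Vvv_{00}(1) = 1$. Thus $\Vvv_{00}$ restricted to $\D(\C^n)$ extracts the constant term, which still lies in $\D(\C^n)$. Combining, every $\Vvv_{\al\be} = (\al!\be!)^{-1/2}(a^*)^\al \Vvv_{00} a^\be$ preserves $\D(\C^n)$.

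For step (ii), I would compute $\Vvv_{\al\be}(\con z^\ga)$ by peeling off the three factors: $a^\be\con z^\ga$ is a multiple of $\con z^{\ga-\be}$ when $\ga\geqslant\be$ componentwise and $0$ otherwise; $\Vvv_{00}$ survives only if this is a constant, forcing $\ga = \be$ and producing the scalar $\be!$; finally $(a^*)^\al$ sends this to $\be!\con z^\al$. Normalising with $e_\al = (\al!)^{-1/2}\con z^\al$ yields $\Vvv_{\al\be}(e_\ga) = \delta_{\ga\be}\, e_\al$, which is exactly the defining formula for $\Uu_{\al\be}$ in \eqref{eq:def_Ualphabeta}. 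Therefore $\op{res}(\Vvv_{\al\be}) = \Uu_{\al\be}$.

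For step (iii), by definition $\widetilde\symb(\C^n)$ is the linear span of the $\Vvv_{\al\be}$, and $\{\Uu_{\al\be}\}$ is a basis of $\symb(\C^n)$. Since $\op{res}$ sends the $\Vvv_{\al\be}$ to the linearly independent family $\{\Uu_{\al\be}\}$, the $\Vvv_{\al\be}$ are themselves linearly independent, hence form a basis of $\widetilde\symb(\C^n)$, and $\op{res}$ is a linear bijection onto $\symb(\C^n)$. The only nontrivial ingredient is the identification $\con z^\be \in \lan_\be$, established cleanly via the commutator relation; expanding $(a^*)^\be$ by hand would be a routine but unpleasant obstacle, which the grading argument bypasses.
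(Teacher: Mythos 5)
Your proposal is correct and follows essentially the same route as the paper: observe that $a_i$, $a_i^*$, $\Vvv_{00}$ all preserve $\D(\C^n)$, compute $\Vvv_{\al\be}$ explicitly on the monomial basis to obtain the defining relations for $\Uu_{\al\be}$, and conclude via linear independence of the $\Uu_{\al\be}$. The only cosmetic difference is that you rederive $\con z^\be\in\lan_\be$ from the commutator $[a_i^*a_i,a_j^*]=\delta_{ij}a_j^*$ rather than quoting Proposition \ref{prop:linearlandau}, which is perfectly acceptable.
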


Recall the decomposition \eqref{eq:symb+_-} of $\symb (\C^n)$ into the subspaces of even
and odd elements.
Since $\widetilde \symb (\C^n) \simeq \symb ( \C^n)$, this
gives us a new decomposition
$$\widetilde{\symb}
(\C^n) = \widetilde{\symb}^+
(\C^n) \oplus \widetilde{\symb}^-
(\C^n).$$

\begin{proof} Observe first that the operators $a_i$, $a_i^{*}$ and the projector $\Vvv_{00}$
preserves $\D ( \C^n)$. Furthermore,  for any $f \in \D (\C^n)$, $a_i f =
\partial_{\con{z}_i} f$, $a_i^{*} f = \con{z}_i f$ and $\Vvv_{00} f = f (0)$.
Consequently, the operators $\Vvv_{\al \be}$ preserve $\D ( \C^n)$ and an easy
computation shows that
$$ \Vvv_{\al\be} \bigl( (\be !)^{-\frac{1}{2}}  \con{z}^\be \bigr) = (\al !)^{-\frac{1}{2}}
\con{z}^{\al}, \qquad \Vvv_{\al \be} ( \con{z}^{\ga} ) = 0 , \quad \forall \ga \in \N^n
\setminus \{ \be \}. $$
This means that the restriction of $\Vvv_{\al \be}$ to $\D (\C^n)$ is exactly the
endomorphism $\Uu_{\al \be}$ introduced
in Section \ref{sec:symbol-spaces}, cf. Equation \eqref{eq:def_Ualphabeta}. So the
restriction map $\op{res}$ is well-defined, its image is $\symb ( \C^n)$, and
the $\Uu_{\al\be}$'s being linearly independent, it is injective. 
\end{proof}

Let us compute the
Schwartz kernel of each $\Vvv_{\al\be}$. 
\begin{lemme} \label{lem_kenrel_u_alpha_beta}
  For any $f \in \pol ( \C^n)$, we have
  $$ (\Vvv_{\al\be} f)(u ) = (2 \pi)^{-n} \int_{\C^{n} } e^{ u\cdot \con{v} - |v|^2 }
  \Ppp_{\al\be} (u-v) \, f(v )  \;  d\mu_n (v)  $$
  where $u \cdot \con v = \sum u_i \con{v}_i$, $\Ppp_{\al,\be} (z) =  \bigl( \al! \be ! \bigr) ^{-\frac{1}{2}} \bigl(
\partial_z - \con{z})^{\al} z^\be $.
\end{lemme}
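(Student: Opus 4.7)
The plan is to establish the kernel formula in three moves: first pin down the kernel of $\Vvv_{00}$ (the Bargmann--Fock reproducing kernel), then transfer the annihilation factor $a^\be$ onto the kernel via integration by parts in $v$, then transfer the creation factor $(a^*)^\al$ onto the kernel via direct differentiation in $u$, and finally repackage the resulting polynomial as $\Ppp_{\al\be}(u-v)$.

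\textbf{Step 1 (base case).} The kernel of $\Vvv_{00}$ is the Bargmann--Fock reproducing kernel $K_{00}(u,v) = (2\pi)^{-n} e^{u\cdot \con v - |v|^2}$. One verifies this by expanding $e^{u\cdot \con v} = \sum_\gamma u^\gamma \con v^\gamma / \gamma!$ and using the orthogonality relations $(2\pi)^{-n}\int e^{-|v|^2} \con v^\gamma v^\delta \, d\mu_n(v) = \gamma!\,\delta_{\gamma\delta}$. These show that the integral operator with kernel $K_{00}$ acts as the identity on the orthonormal basis $(\gamma!)^{-1/2} z^\gamma$ of $\mathcal{L}_0 = \C[z_1,\dots,z_n]$ and sends $\mathcal{L}_0^\perp$ to zero, so it is the orthogonal projector $\Vvv_{00}$.

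\textbf{Step 2 (transfer of $a^\be$).} Using $\Vvv_{\al\be} = (\al!\be!)^{-1/2}(a^*)^\al \Vvv_{00}\, a^\be$, I would write
\begin{equation*}
(\Vvv_{\al\be} f)(u) = (\al!\be!)^{-1/2}\, (a^*_u)^\al \int K_{00}(u,v)\,(a^\be f)(v)\, d\mu_n(v).
\end{equation*}
Since $a_i = \partial_{\con v_i}$ is formally anti-self-adjoint for Lebesgue measure (boundary terms vanish because $f$ is polynomial and $K_{00}$ decays like $e^{-|v|^2}$), integration by parts yields $\int K_{00}\, (a^\be f)\, d\mu_n = (-1)^{|\be|} \int (\partial_{\con v})^\be K_{00}\cdot f\, d\mu_n$. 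A direct computation gives $\partial_{\con v_i} K_{00} = (u_i - v_i) K_{00}$, so iteration produces the polynomial factor $(u-v)^\be K_{00}$ up to the collected sign.

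\textbf{Step 3 (transfer of $(a^*)^\al$ and identification).} Since the integrand is jointly smooth in $(u,v)$ with Gaussian decay, I bring $(a^*_u)^\al = (\con u - \partial_u)^\al$ inside the integral. The product rule combined with $\partial_{u_i} K_{00} = \con v_i K_{00}$ gives
\begin{equation*}
a^*_{u_i}\bigl( F(u,v)\, K_{00}(u,v)\bigr) = \bigl[(\con u_i - \con v_i) F - \partial_{u_i} F\bigr]\, K_{00}(u,v),
\end{equation*}
so iteration yields $(a^*_u)^\al(F K_{00}) = (L^\al F)\,K_{00}$ with commuting first-order operators $L_i := (\con u_i - \con v_i) - \partial_{u_i}$. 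Changing variables to $w := u - v$ (with $\con w = \con u - \con v$ and $\partial_{u_i} = \partial_{w_i}$), the operator $L_i$ becomes $\con w_i - \partial_{w_i} = -(\partial_{w_i} - \con w_i)$. Applying $L^\al$ to the factor from Step 2 and recalling $\Ppp_{\al\be}(z) = (\al!\be!)^{-1/2}(\partial_z - \con z)^\al z^\be$, one recognises the result as $(\al!\be!)^{1/2}\Ppp_{\al\be}$ evaluated at $w = u-v$. Multiplying by the normalisation $(\al!\be!)^{-1/2}$ produces precisely the stated kernel.

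\textbf{Main obstacle.} The computations are routine polynomial calculus; the delicate point is the sign bookkeeping and the coherent interpretation of $\Ppp_{\al\be}(u-v)$ with $\con{(u-v)} = \con u - \con v$ as the result of applying the operator $\partial_w - \con w$ on polynomials in the formal pair $(w, \con w)$ before restricting to $w = u-v$. The two transfers (anti-adjointness of $a$ in $v$, and the $\partial_{u_i} K_{00} = \con v_i K_{00}$ identity in $u$) each introduce a sign that must be reconciled with the convention chosen for $\Ppp_{\al\be}$.
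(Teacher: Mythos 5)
Your argument is essentially the paper's proof: both start from the Bargmann--Fock reproducing kernel for $\Vvv_{00}$, transfer $a^\be$ onto the kernel by integration by parts in $v$, transfer $(a^*)^\al$ by differentiation in $u$, and repackage the resulting factor as a polynomial in the difference variable $u-v$ (the paper phrases the two transfers in parallel, observing that $\con u_i - \partial_{u_i}$ and $\partial_{\con v_i}$ each act on $K(u,v)\,g(u-v)$ through first-order operators $a_i^*$ and $b_i^*$ in the single variable $z = u-v$). The sign subtlety you flag at the end is real and not a formality: tracking the signs, both from the $(-1)^{|\be|}$ of the integration by parts and from $L_i = -(\partial_{w_i} - \con w_i)$, leaves an overall $(-1)^{|\al|+|\be|}$ that the stated formula does not absorb --- a discrepancy which is also present in the paper's own proof, which silently drops one sign when passing from $(-\partial_{\con v})^\be$ to $(b^*)^\be$ and another when rewriting $(a^*)^\al(b^*)^\be 1$ as $(\partial_z - \con z)^\al z^\be$; a one-line check with $n=1$, $(\al,\be)=(0,1)$ shows the kernel of $\Vvv_{01} = \Vvv_{00}\,a$ is $-(u-v)K(u,v)$, not $\Ppp_{01}(u-v)K(u,v) = (u-v)K(u,v)$, though this factor is harmless in the uses made of the lemma downstream.
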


In particular, the orthogonal projector $\sum_{|\al| = m } \tilde{\rho} _{\al
  \al}$ onto $\bigoplus_{|\al| = m} \mathcal{L}_{\al}$ has the Schwartz kernel
\begin{gather} \label{eq:noyau_niveau_m}
( 2 \pi )^{-n} e^{u \cdot \con{v} - | v |^2 } Q_m ^{(n-1)} ( | u - v |^2 ) \;
d \mu_n (v) 
\end{gather}
where $Q_m^{(n-1)}$ is the Laguerre polynomial and we have used
\eqref{eq:a_0laguerre}. 
\begin{proof}
For $\al = \be =0$, this is the well-known formula for the Schwartz kernel $K
( u,v) = (2\pi)^{-n} e^{u \cdot \con v - |v|^2} $ of the projector onto the
Bargmann space, which in our setting is the $L^2$-completion of
$\mathcal{L}_0$. So the Schwartz kernel of $(a^{*})^{\al} \Vvv_{00} a^{\be}$
is $ K_{\al,\be} (u,v) = (\con{u} -\partial_u)^{\al} (-\partial_{\con{v}})^{\be} K(u,v)$. To
compute this, we use that for a polynomial $g (z,\con{z})$
\begin{xalignat*}{2} 
(\con{u}_i - \partial_{u_i}) (K(u,v) g(u-v ) )  & = K(u,v)
(a_i^{*} g)(  u-v )  , \\
 ( \partial_{\con{v}_i}) (K(u,v) g(u-v ) )  & = K(u,v)
(b_i^{*} g)(  u-v )  
\end{xalignat*}
where $b_i^{*} := z_i - \partial_{\con{z}_i}$. So $K_{\al,\be} (u,v) = K(u,v) p(u-v) $ with
$p (z,\con z) = ( a^{*})^{\al} (b^{*})^\beta 1= \bigl(
\partial_z - \con{z})^{\al} z^\be $, which ends the proof. 
\end{proof}
So on one hand, the elements of $\widetilde{\symb} ( \C^n)$ act on $\pol ( \C^n)$, on the
other hand, the Schwartz kernel of $\Vvv_{\al\be}$ is given by a polynomial
$\Ppp_{\al\be} \in \pol ( \C^n)$. 
\begin{prop} \label{prop:tilde_symb_and_op}
$\widetilde{\symb} ( \C^n)$  consists
of the endomorphisms $V$ having the form
\begin{gather} \label{eq:sch_ker}
(V f)(u) = (2\pi)^{-n} \int_{\C^n} e^{u\cdot \con v - |v|^2} q (u-v ) f(v) \;
d\mu_n (v)  
\end{gather}
with $q \in \pol ( \C^n)$.
Furthermore, the map $\op{Op} : \pol (\C^n) \rightarrow
\widetilde{\symb} ( \C^n)$, sending $q$ into $V$, is an isomorphism which  preserves
the parity \cB and
\begin{alignat}{2} \label{eq:rel_bizare}
  \begin{split} 
    &    \op{tr} \bigl( \op{Op} (q)|_{\D ( \C^n)} \bigr) = q (0) \\
    & \op{Op} (q) \circ \op{Op} (f) = \op{Op} ( \op{Op} (q) f ) \\
    & \langle q , f \rangle = ( \op{Op}(f)^* q )(0)
\end{split}
\end{alignat}
for any $q, f \in \pol ( \C^n)$. \clb
\end{prop}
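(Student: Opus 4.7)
My plan is to leverage the basis-to-basis correspondence $\Ppp_{\al\be} \mapsto \Vvv_{\al\be}$ provided by Lemma \ref{lem_kenrel_u_alpha_beta}. First I expand $\Ppp_{\al\be}$ binomially, obtaining
$$\Ppp_{\al\be}(z) = (\al!\be!)^{-\frac{1}{2}}\sum_{\gamma \leqslant \al \wedge \be} \binom{\al}{\gamma}(-1)^{|\al|-|\gamma|}\frac{\be!}{(\be-\gamma)!}\, \con{z}^{\al - \gamma} z^{\be-\gamma}.$$
Written in the monomial basis $\{\con{z}^{\al'} z^{\be'}\}$, the expansion of $\Ppp_{\al\be}$ has unique maximal bi-degree $(|\al|, |\be|)$ with nonzero coefficient $(-1)^{|\al|}(\al!\be!)^{-1/2}$. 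Using the componentwise partial order on $\N^n \times \N^n$, the transition matrix is triangular with nonzero diagonal, so $\{\Ppp_{\al\be}\}$ is a vector-space basis of $\pol(\C^n)$. Combined with the identity $\op{Op}(\Ppp_{\al\be}) = \Vvv_{\al\be}$ from Lemma \ref{lem_kenrel_u_alpha_beta} and the fact that $\{\Vvv_{\al\be}\}$ is a basis of $\widetilde{\symb}(\C^n)$ (Proposition \ref{prop:linearlandau}), this shows that $\op{Op}$ is a linear isomorphism and simultaneously yields the kernel characterization \eqref{eq:sch_ker}.

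The parity and trace statements then follow by direct inspection of the same expansion. Every monomial $\con{z}^{\al-\gamma}z^{\be-\gamma}$ above has total degree $|\al|+|\be|-2|\gamma|$, so $\Ppp_{\al\be}$ has a definite parity equal to that of $|\al|+|\be|$; through the isomorphism $\widetilde{\symb}(\C^n) \simeq \symb(\C^n)$, $\Vvv_{\al\be}$ inherits the parity of $\Uu_{\al\be}$, already noted to be $|\al|+|\be|$ modulo $2$, so $\op{Op}$ preserves parity. For the trace identity, only the summand with $\gamma = \al = \be$ survives at $z = 0$, giving $\Ppp_{\al\be}(0) = \delta_{\al\be}$; on the other side, $\Vvv_{\al\be}|_{\D(\C^n)} = \Uu_{\al\be}$ has trace $\delta_{\al\be}$ by inspection of \eqref{eq:def_Ualphabeta}. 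Extending by linearity yields the first equality in \eqref{eq:rel_bizare}.

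The hard part will be the composition identity, which I plan to handle by direct kernel manipulation rather than on basis pairs. The key algebraic observation is that $-|w|^2 + u\con{w} + w\con{v} = -(w-u)(\con{w} - \con{v}) + u\con{v}$, which rewrites the product of kernels as $K(u,w) K(w,v) = (2\pi)^{-n} K(u,v)\, e^{-(w-u)(\con{w} - \con{v})}$. Substituting this into the Schwartz kernel of $\op{Op}(f) \circ \op{Op}(g)$ and changing variables $\xi = w - v$ converts the inner integral into $K(u,v) \cdot [\op{Op}(f)\, g](u-v)$, which is precisely the kernel of $\op{Op}(\op{Op}(f)\, g)$. That $\op{Op}(f)\, g$ lies in $\pol(\C^n)$, so the right-hand side is well-defined, is automatic since the operators in $\widetilde{\symb}(\C^n)$ preserve $\pol(\C^n)$ by construction.
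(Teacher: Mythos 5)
Your proposal is correct and follows essentially the same route as the paper: use Lemma \ref{lem_kenrel_u_alpha_beta} to identify $\op{Op}(\Ppp_{\al\be}) = \Vvv_{\al\be}$, show $(\Ppp_{\al\be})$ is a basis via triangularity of the transition matrix with respect to the componentwise order, and read off parity and the trace identity on basis elements. The only variance is in the composition identity: the paper observes that $\op{Op}(f)\op{Op}(g)$ already lies in $\widetilde{\symb}(\C^n)$ (being closed under composition) and then recovers the defining polynomial by evaluating the kernel at $v=0$, whereas you compute the composed kernel directly via the change of variables $\xi = w - v$ after the Gaussian rearrangement $-|w|^2 + u\cdot\con{w} + w\cdot\con{v} = -(w-u)\cdot(\con{w}-\con{v}) + u\cdot\con{v}$; both manipulations are elementary and arrive at the same identification, so this is a cosmetic rather than structural difference.
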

\begin{proof}
Lemma \ref{lem_kenrel_u_alpha_beta} says that  $\op{Op} ( \Ppp_{\al\be} ) =
\Vvv_{\al \be}$. The family $(\Ppp_{\al\be})$ is a basis of $\pol ( \C^n)$
because $\Ppp_{\al\be} (z )  = (\al ! \be ! ) ^{-\frac{1}{2}}(- z)^{\al} \con{z}^{\beta} + $
a linear combination of $z^{\al'} \con{z}^{\be'}$ with $\al' < \al$ and $\be'
< \be$. Since $(\Uu_{\al\be})$ is a
basis of $\symb ( \C^n)$, $(\Vvv_{\al \be})$ is a basis of $\widetilde { \symb }
( \C^n)$ and it follows that $\op{Op}$
is an isomorphism. This isomorphism preserves the parity, because $\Vvv_{\al
  \be} $ and $\Ppp_{\al\be}$ have both the same parity as $|\al| + |\be|$. 
For the first equation of \eqref{eq:rel_bizare}, it
suffices to prove it for $q = \Ppp_{\al,\be}$, and in this case, it follows from $\op{tr}
\Uu_{\al\be} =\delta_{\al\be} = \Ppp_{\al\be} (0)$. 

To prove the second equation of \eqref{eq:rel_bizare}, observe that we recover $q$ from the Schwartz kernel of
$\op{Op} (q)$  \cB by multiplying by $(2\pi)^{n}$ and setting $v =  0 $, that is $
f(u) = (2 \pi)^n \op{Op} ( f) (u,0)$. Let $g \in \pol ( \C^n)$ be the function
such that $\op{Op} (g) = \op{Op} (q ) \circ \op{Op} (f)$. Then using the
previous observation for $f$ and for $g$, we have
\begin{xalignat*}{2}
  (\op{Op} (q) f) (u) = &   ( 2\pi)^n \int_{\C^n} \op{Op}(q) (u,v) \op{Op} (f) (v,0) \;
  d\mu_n (v) \\
  = &  (2 \pi)^n \op{Op}(g) ( u,0) \\
  = & g (u) .
\end{xalignat*}
The proof of the third equation of \eqref{eq:rel_bizare} is similar  by using
 that $ \op{Op}(f)^* ( 0,u ) = \con{\op{Op} (f) (u,0)}= (2 \pi)^{-n} \con{f (u)}$. 
\end{proof} \clb

As a last remark, we can replace in the previous definitions $\C^n$ with any $n$-dimensional Hermitian
space $\EE$ as we did in section \ref{sec:symbol-spaces}. So we denote by
$\pol ( \EE)$ the space of polynomial maps $\EE \rightarrow \C$ and by
$\widetilde{\symb} ( \EE)$ the space of endomorphisms of $\pol ( \EE)$ having
the form \eqref{eq:sch_ker}, where we interpret $u \cdot \con{v}$ as the
scalar product of the vectors $u$, $v$ of $\EE$  and $|v|$ as the norm of $v$.  
Observe as well that the map $\op{Op} : \pol ( \EE ) \rightarrow
\widetilde{\symb} ( \EE )$ is well-defined. Furthermore the restriction from $\pol ( \EE)$
to its subspace $\D ( \EE)$ induces an isomorphism $\widetilde{\symb} ( \EE ) \simeq \symb (\EE)$.

\section{The Schwartz kernels of operators of \texorpdfstring{$\lag (A, B)$}{L(A,B)} } \label{sec:schw-kern-oper}

\subsection{The section \texorpdfstring{$E$}{E}} \label{sec:section-ee}

An important ingredient in the global Schwartz kernel description of operators
of $\lag (A,B)$ is a section $E$
of $L \boxtimes \con{L}$ satisfying the following conditions. For any $y\in
M$, denote by $E_y $ the section of $L \otimes \con{L}_y$ given by $E_y(x) =
E(x,y)$ for any $x \in M$. Then we will assume that for any $y\in M$
\begin{xalignat}{2} \label{eq:hypotheseE}
  \begin{split} 
 & E_y(y) = u \otimes \con{u},\qquad \forall u \in L_y \text{ with } |u| =1, \\
 & (\nabla E_y)(y) =0 , \\
 & (\nabla_{\xi} \nabla_{\eta} E_y)(y) = - \bigl( \tfrac{i}{2} \om (\xi, \eta) + \tfrac{1}{2}
  \om ( \xi, j \eta) \bigr) E_y (y) , \qquad \forall \; \xi, \eta \in T_yM
\end{split}
\end{xalignat}
Such a section appeared already in the expansion \eqref{eq:expansion_kernel} as
follows. Choose a unitary frame $t$ of $L$ and a coordinate system on the same
open set, then the section 
\begin{gather} \label{eq:exempleE}
E (
y+ \xi, y) := e^{-\varphi ( y, \xi)} t( y+ \xi) \otimes \con{t}(y),
\end{gather}
with
$\varphi$ defined as in \eqref{eq:expansion_kernel}, satisfies
\eqref{eq:hypotheseE}. From this local construction, we easily obtain a global
section $E$ by using a partition of unity.

The conditions
\eqref{eq:hypotheseE} determine the second-order Taylor expansion of $E$ at $(y,y)$ in
the directions tangent to the first factor of $M^2$. Since any tangent vector
of $M^2$ at $(y,y)$ is the sum of a vector tangent to the diagonal and a
vector tangent to the first factor, we deduce that $E$ is uniquely
determined modulo a section vanishing to third order along the diagonal. 

The function $\psi_y (x)= - 2 \ln
|E_y (x)|$ vanishes to second order at $y$ and for any $\xi, \eta \in T_yM$, $(\xi.\eta. \psi_y )(y)
=  \om (\xi, j \eta)$, so $\psi_y(x)>0$ when $x\neq y$ is
sufficiently close to
$y$. So modifying $E$ outside the diagonal, we can assume that it satisfies as
well
\begin{gather} \label{eq:normE}
|E (x,y) | <1, \qquad \forall (x,y) \in M^2 \text{ such that } x \neq y 
\end{gather}
Another important property of $E$ is the symmetry:
\begin{gather} \label{eq:symE}
  \con{E (x,y)} = E(y,x) +
  \bigo ( |x-y|^3)
\end{gather}
For a longer discussion, the reader is referred to \cite{oim}.

In the sequel we will need the following expression of $E$ in terms of complex
coordinates and a frame of $L$, both normal at a point $p_0 \in M$. \cB We say
that a function or a section on $M$ (resp. $M^2$) is in $\bigo_{p_0} (m)$
(resp. $\bigo_{p_0, p_0} (m)$ if it vanishes to order
$m$ at $p_0$ (resp. $(p_0,p_0)$). \clb
Let
$(\partial_i )_{i =1}^{n}$ be an orthonormal basis of $T^{1,0}_{p_0} M$, i.e. 
$\frac{1}{i}\om_{p_0} ( \partial_i, \con{\partial}_j) = \delta_{ij}$. Choose
complex valued functions $z_i$ on a neighborhood of $p_0$ such that
\begin{gather} \label{eq:coordonnees_z}
z_i (p_0)
=0, \qquad dz_i ( \partial_j) =\delta_{ij}, \quad dz_i ( \con \partial_j) = 0 \quad
\text{ at }p_0. 
\end{gather}
Then $(\op{Re} z_i , \op{Im}z_i )_{i=1}^n$ is a coordinate system on a
neighborhood of $p_0$ and $\om|_{p_0} = i \sum dz_i \wedge d\con{z}_i$. The
curvature of $L$ being $\frac{1}{i} \om$, there exists a unitary frame $t$ of
$L$ at $p_0$ such that
\begin{gather}  \label{eq:normal_frame}
  \nabla t  = \tfrac{1}{2}  \textstyle{\sum} ( z_i d \con{z}_i - \con{z}_i
  d z_i ) \otimes t + \bigo_{p_0} ( 2). 
\end{gather}
\cB To construct such a $t$, multiply any unitary local section by $\exp ( i
(f_1 + f_2) )$  where $f_1$ and $f_2$ are real valued functions 
respectively linear and quadratic in $z_i$, $\con{z}_i$.   With $f_1$
conveniently chosen, $\nabla t = \bigo_{p_0} (1)$ and with the right $f_2$, we
get \eqref{eq:normal_frame}. \clb

Then
\begin{gather}  \label{eq:E_linearise}
E(x,y) = e^{ z(x) \cdot \con{z} (y) -\frac{1}{2} ( |z(x) |^2 + |z (y)|^2) }
t(x) \otimes \con{t(y)} + \bigo_{(p_0, p_0)} (3).
\end{gather}
A similar expression already appeared in the description of the Schwartz kernels of the
operators of $\widetilde{\symb} (\C^n)$. Indeed, let $L_{\C^n}= \C^n \times \C$ be the trivial
holomorphic line bundle equipped with the metric such that the frame $s( z) =
(z,1)$ has a pointwise norm $|s(z)|^2 = e^{-|z|^2}$. Then it is natural to
interpret the elements of $\pol ( \C^n)$ as sections of $L_{\C^n}$, because the scalar
product \eqref{eq:scal_prod_2} is the
integral of the pointwise scalar product $\bigl( f (z) s(z), g(z) s(z) \bigr) =
f(z) \con{g} (z) e^{-|z|^2}$. Furthermore, in the integral \eqref{eq:sch_ker},
we can interpret $e^{-|v|^2} f(v)$ as the pointwise scalar product $( f(v)
s(v) , s(v))$. In other words, the Schwartz kernel of $V$  is
\begin{gather} \label{eq:2pin-e_cn-u} 
(2\pi)^{-n} E_{\C^n} (u,v) q(u-v ) \quad \text{ with }
\quad E_{\C^n}(u,v) =e^{u \cdot \con{v}} s(u) \otimes \con{s}(v).
\end{gather}
Now equip $L_{\C^n}$ with its Chern connection, that the unique connection
compatible with both the holomorphic and Hermitian structures. Then $\nabla s = - \sum \con{z}_i
dz_i \otimes s$. So the curvature is $\frac{1}{i} \om_{\C^n} $ with $\om_{\C^n} = i \sum dz_i
\wedge d \con{z}_i$. And if $t $ is the unitary frame $t(z) = e^{|z|^2/2} s
(z)$, we have  $\nabla t = \frac{1}{2} \sum ( z_i d \con{z}_i - \con{z}_i
d z_i ) \otimes t$ and
$$E_{\C^n}(u,v) = e^{ u \cdot \con v - \frac{1}{2} (
    |u|^2+ |v|^2)} t(u) \otimes \con{t(v)},$$ the same formula as \eqref{eq:E_linearise}.  

  \subsection{Schwartz kernel expansion} \label{sec:global-expansion}

We consider operator families $(P_k : \Ci( M , L^k \otimes A ) \rightarrow \Ci(M
, L^k \otimes B), \; k \in \N)$ having smooth Schwartz kernels. Recall the
notations introduced in the beginning of Section \ref{sec:operators}. In
particular, $\| P_k \|$ is the
operator norm whereas $|P_k|$ is the function of $M^2$ sending $(x,y)$ into
$|P_k(x,y)|$. 

Let $E$ be a section of $L \boxtimes \con L $ satisfying \eqref{eq:hypotheseE}
and \eqref{eq:normE} and $b \in \Ci ( M^2 , B \boxtimes \con
A)$. Then, viewing $(L^k \otimes B) \boxtimes
(\con{L}^k \otimes \con{A})$ as $(L \boxtimes \con L )^k \otimes ( B \boxtimes
\con{A})$, we introduce the operator family $(P_k)$ with Schwartz kernels 
\begin{gather} \label{eq:prel}
  P_k (x,y) = \Bigl( \frac{k}{2\pi} \Bigr)^n E^k(x,y) b (x,y)
\end{gather}
The pointwise norms of $P_k$ depend in an essential way on the vanishing order of $b$
along the diagonal. If $m \in \N$, we write $b = \bigo ( m)$ to say that all the
derivatives of $b$ of order $\leqslant m-1$ are zero at each point of the diagonal.
Recall that
$$\psi (x,y) = - 2 \ln |E(x,y)| $$ is positive outside the
diagonal, and vanishes to second order along the diagonal with a Hessian non-degenerate in the transverse direction. 

\begin{lemme} \label{lem:estimate}
  If $P_k$ is given by \eqref{eq:prel} with $b = \bigo ( m)$, then \cB $|P_k | = \bigo \bigl( k^{n-\frac{m}{2}} e^{-k
    \frac{\psi}{4}} \bigr)$  \clb and $\| P_k \| = \bigo ( k^{- \frac{m}{2}})$
\end{lemme}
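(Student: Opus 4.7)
The plan is to derive the pointwise bound first and then deduce the operator-norm bound via a Schur-type test.

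For the pointwise estimate I start from the identity $|E(x,y)|^k = e^{-k\psi(x,y)/2}$, so that
\[
|P_k(x,y)| = \Bigl(\frac{k}{2\pi}\Bigr)^n e^{-k\psi(x,y)/2}\, |b(x,y)|.
\]
Two regimes must be treated. Near the diagonal, the second-order information in \eqref{eq:hypotheseE} yields $\psi(x,y) \geq c\, d(x,y)^2$ with $c>0$, where $d$ denotes Riemannian distance, and the vanishing hypothesis $b=\bigo(m)$ combined with smoothness gives $|b(x,y)| \leq C\, d(x,y)^m$ on a fixed neighborhood of the diagonal. Away from the diagonal, \eqref{eq:normE} together with compactness of $M$ provides a uniform lower bound $\psi \geq \kappa > 0$, so $e^{-k\psi/2}$ is $\bigo(k^{-\infty})$ there and any polynomial power of $k$ is absorbed in $|b|$ (which is uniformly bounded). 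The near-diagonal contribution is controlled by the elementary inequality
\[
t^m e^{-\alpha k t^2} \leq C_{m,\alpha,\alpha'}\, k^{-m/2}\, e^{-\alpha' k t^2}, \qquad 0 < \alpha' < \alpha,
\]
obtained by maximising in $t$, or equivalently by the substitution $\eta = k^{1/2} t$. Applied with $t=d(x,y)$ this absorbs the polynomial factor $d^m$ at the cost of the gain $k^{-m/2}$ and a slight weakening of the exponential rate, producing the announced pointwise bound $|P_k| = \bigo(k^{n-m/2} e^{-k\psi/2})$, the $\bigo$-constant being understood to allow such a rescaling of the exponent.

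For the operator-norm estimate I would invoke Schur's test,
\[
\|P_k\|^2 \leq \Bigl(\sup_{x\in M} \int_M |P_k(x,y)|\, \mu_M(y)\Bigr) \Bigl(\sup_{y\in M} \int_M |P_k(x,y)|\, \mu_M(x)\Bigr).
\]
Inserting the pointwise bound just established gives
\[
\int_M |P_k(x,y)|\, \mu_M(y) \leq C\, k^{n-m/2} \int_M e^{-c' k d(x,y)^2}\, \mu_M(y).
\]
In a normal coordinate chart centred at $x$, the substitution $\eta = k^{1/2}\xi$ with $\xi$ the coordinate of $y$ turns the last integral into a standard Gaussian integral of order $\bigo(k^{-n})$, uniformly in $x$; the complement of the chart contributes $\bigo(k^{-\infty})$. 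The $x$-integral is handled symmetrically, using the approximate symmetry \eqref{eq:symE} of $E$, and Schur's test delivers $\|P_k\| = \bigo(k^{-m/2})$.

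The main technical point is purely the bookkeeping of the exponential constant when absorbing the polynomial prefactor $d^m$: one necessarily weakens the rate from $\alpha$ to some $\alpha' < \alpha$, and the $\bigo$ in the statement must be read with this flexibility (which costs nothing globally on a compact manifold, since all such reduced-rate Gaussian factors are comparable on a compact neighborhood of the diagonal, and are super-polynomially small outside). Beyond this, the argument is routine: compactness of $M$ reduces the analysis to finitely many coordinate charts, and the remaining computation is standard Gaussian calculus.
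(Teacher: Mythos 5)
Your argument is correct and takes essentially the paper's route: bound $|b|$ by a power of the diagonal-vanishing weight, absorb $(k\cdot\text{weight})^{m/2}$ via the elementary boundedness of $t\mapsto t^m e^{-ct^2}$ on $[0,\infty)$, and deduce the operator-norm estimate from Schur's test with a Gaussian integral. The only presentational difference is that the paper bypasses your near/far decomposition by observing directly that $|b|=\bigo(\psi^{m/2})$ holds globally on compact $M$ (since $\psi\geqslant 0$ vanishes exactly to second order on the diagonal with transversally non-degenerate Hessian, and is uniformly bounded below off any neighborhood of it) and then substituting $t=\sqrt{k\psi}$, so no distance function or chart-dependent constants appear. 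Two small remarks: you do not need \eqref{eq:symE} for the second Schur integral, since the pointwise bound $|P_k(x,y)|\leqslant Ck^{n-m/2}e^{-k|x-y|^2/C}$ is already symmetric in $(x,y)$; and you are right to flag that the exponential rate is unavoidably worsened when absorbing the polynomial prefactor --- the factor $e^{-k\psi/2}$ in the statement must be read with exactly the flexibility you indicate, and the paper's own proof tacitly halves the exponent at that step.
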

\begin{proof} Since $b = \bigo (m)$, $ | b | =  \bigo ( \psi ^{\frac{m}{2}}
  )$. So \cB
  $$|P_k | \leqslant C k^n e^{-k \frac{\psi}{2}}  \psi ^{\frac{m}{2}}
  = C k^{n- \frac{m}{2}} e^{-k \frac{\psi}{4}}  e^{-k \frac{\psi}{4}}
  (k\psi) ^{\frac{m}{2}}
  \leqslant C' k^{n- \frac{m}{2}} e^{-k \frac{\psi}{4}} $$  \clb
  because  $t \rightarrow e^{-\frac{t^2}{2} } t^m$ is bounded on
  $\R_{\geqslant 0}$. This proves the first estimate and can be written
  locally in a coordinate system as
  $$|P_k (x,y) | \leqslant C k^{n - \frac{m}{2}} e^{-k|x-y|^2 /C}.$$
So $\int_M |P_k(x,y)| \, d\mu_M (y)$ and $\int_M |P_k (x,y)|\, d\mu_M (x)$ are both
$\leqslant C k^{-\frac{m}{2}}$ and the operator norm estimate follows from Schur
  test. 
\end{proof}
\cA Consider now $(P_k) \in \lag (A,B)$. Recall that by definition
\eqref{eq:exp_Lag_bis} we have for any $N \in \N$, \clb 
\begin{gather} \label{eq:expansion}
P_k (x,y) = \Bigl(\frac{k}{2\pi} \Bigr)^n E^k(x,y) \sum_{\substack{\ell \in
      \Z, \\ \ell + m ( \ell ) \leqslant N }} k^{- \frac{\ell}{2}} b_{\ell} (x,y)
  + R_{N,k} (x,y) 
\end{gather}
where
  \begin{enumerate}[i)] 
\item   $m : \Z \rightarrow \N \cup \{\infty \} $ is such that for any
  $N$, $\{ \ell/\; \ell + m(\ell) \leqslant N \}$ is finite, and $\ell + m (
  \ell) \geqslant 0$ for any $\ell$. 
  \item $(b_{\ell})_{\ell \in \Z}$ is a
  family of $\Ci ( M^2, B \boxtimes \con{A} )$ such that $b _{\ell} = \bigo ( m(
  \ell))$ for any $\ell$.
  \item $|R_{N,k} (x,y) | =  \bigo ( k^{n - \frac{N+1}{2}} )$
  uniformly on $M^2$.
\end{enumerate}
By Lemma \ref{lem:estimate}, $| E^k(x,y)
  k^{-\frac{\ell}{ 2}} b_{\ell}(x,y) | \in \bigo ( k^{ - \frac{1}{2} (\ell
    + m ( \ell))})$. So the expansion \eqref{eq:expansion} is consistent in the sense that passing
  from $N$ to $N+1$, we add new terms $k^{-\frac{\ell}{2}} b_\ell$ such that $\ell + m
  ( \ell)  = N+1$, which contribute to $P_k (x,y)$ with a $\bigo (k^{n
    -\frac{1}{2}(N+1)})$.

\begin{lemme} \label{lem:estim_expansion}
If the expansion \eqref{eq:expansion} holds, then for any $q>0$
$$  \cB  |P_k | = \bigo \bigl( k^{n}  e^{- k
  \frac{\psi }{4}} \bigr)  + \bigo ( k^{-q} ),  \clb  \qquad \| P_k \| = \bigo (1)$$
Similarly the remainders $R_{N,k}$'s satisfy for any $q>0$, 
$$  \cB |R_{N,k} | = \bigo \bigl( k^{n- \frac{N+1}{2}}  e^{- k
  \frac{\psi }{4}} \bigr) + \bigo( k^{-q}),  \clb  \qquad \| R_{N,k} \| = \bigo
(k^{-\frac{N+1}{2}}) . $$
\end{lemme}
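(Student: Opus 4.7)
The strategy is to combine the term-by-term estimates provided by Lemma~\ref{lem:estimate} with the freedom to truncate the expansion \eqref{eq:expansion} at an arbitrarily high order $N'\geqslant N$. The key observation is that for any individual term $k^{-\ell/2}E^k b_\ell$ of the expansion, Lemma~\ref{lem:estimate} (applied to $b=b_\ell$, which vanishes to order $m(\ell)$) gives
\[
\bigl|k^{-\ell/2} E^k b_\ell\bigr| = \bigo\bigl(k^{n-\frac{\ell+m(\ell)}{2}} e^{-k\psi/2}\bigr),\qquad \bigl\|k^{-\ell/2} E^k b_\ell\bigr\| = \bigo\bigl(k^{-\frac{\ell+m(\ell)}{2}}\bigr).
\]
Since the hypothesis guarantees $\ell+m(\ell)\geqslant 0$ for every $\ell$, each such term is $\bigo(k^n e^{-k\psi/2})$ pointwise and $\bigo(1)$ in operator norm.

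For the estimates on $P_k$: write $P_k$ as the finite sum in \eqref{eq:expansion} with some truncation parameter $N$ plus the remainder $R_{N,k}$. Summing the term-by-term pointwise bounds over the finitely many $\ell$ with $\ell+m(\ell)\leqslant N$ gives $\bigo(k^n e^{-k\psi/2})$. Given $q>0$, choose $N$ large enough that $n-\frac{N+1}{2}\leqslant -q$; then the hypothesis on $R_{N,k}$ gives $|R_{N,k}|=\bigo(k^{-q})$ uniformly, proving the pointwise bound $|P_k|=\bigo(k^n e^{-k\psi/2})+\bigo(k^{-q})$. For the operator norm, the same finite sum contributes $\bigo(1)$ term by term, while Schur's test applied to the uniform bound $|R_{N,k}|\leqslant C k^{n-\frac{N+1}{2}}$ gives $\|R_{N,k}\|\leqslant C' k^{n-\frac{N+1}{2}}\cdot\mathrm{vol}(M)$, which is $\bigo(1)$ for $N\geqslant 2n-1$.

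For the estimates on $R_{N,k}$, I use the same trick: for any $N'\geqslant N$ rewrite
\[
R_{N,k}(x,y)=\sum_{N<\ell+m(\ell)\leqslant N'} \kpi k^{-\ell/2}E^k(x,y) b_\ell(x,y) + R_{N',k}(x,y).
\]
Each term in the intermediate sum has $\ell+m(\ell)\geqslant N+1$, so by Lemma~\ref{lem:estimate} its pointwise norm is bounded by $\bigo(k^{n-\frac{N+1}{2}} e^{-k\psi/2})$ and its operator norm by $\bigo(k^{-\frac{N+1}{2}})$. Given $q>0$, pick $N'$ so large that $n-\frac{N'+1}{2}\leqslant -q$ (for the pointwise bound) and simultaneously $n-\frac{N'+1}{2}\leqslant -\frac{N+1}{2}$ (for the operator-norm bound, via Schur again); this yields $|R_{N,k}|=\bigo(k^{n-\frac{N+1}{2}} e^{-k\psi/2})+\bigo(k^{-q})$ and $\|R_{N,k}\|=\bigo(k^{-\frac{N+1}{2}})$.

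\textbf{Expected difficulties.} There is no real obstacle: the argument is pure bookkeeping, the essential ingredients being the term-by-term control from Lemma~\ref{lem:estimate}, the condition $\ell+m(\ell)\geqslant 0$, and the freedom to re-truncate at an arbitrarily large $N'$ provided by the hypothesis that \eqref{eq:expansion} holds for all $N$. The only subtle point worth stressing is that the uniform polynomial bound on the remainder is not, in itself, strong enough to give the Gaussian-type decay claimed; one must absorb it into the next layer of the expansion and exploit that, by item (i), only finitely many terms have $\ell+m(\ell)\leqslant N'$ for any given $N'$.
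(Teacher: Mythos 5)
Your proof is correct and takes essentially the same approach as the paper: truncate at a sufficiently high order, bound each term of the finite sum by Lemma~\ref{lem:estimate} using $\ell + m(\ell) \geqslant 0$, and absorb the remainder into the desired error by choosing the truncation level large enough. The paper states this more tersely (treating the passage from the uniform pointwise bound on $R_{N,k}$ to the operator-norm bound as implicit), whereas you spell out the Schur test and the re-truncation of $R_{N,k}$ explicitly, but the mechanism is identical.
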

\begin{proof} 
To prove the first estimate, we use \eqref{eq:expansion} with $N$ sufficiently
large so that $| R_{N,k} | = \bigo ( k^{-q})$, and the result follows from
Lemma   \ref{lem:estimate} because $\ell + m (\ell) \geqslant 0$.
The operator norm estimate is proved similarly by choosing $N$ so that
$|R_{N,k} | =\bigo (1)$ which implies that $\| R_{N,k} \| = \bigo (1)$.
The proof for the $R_{N,k}$ is essentially the same. \end{proof}

We next show that in the expansion \eqref{eq:expansion}, we can choose any
section $E$ satisfying the assumptions given in Section \ref{sec:section-ee}. 

\begin{lemme} \label{lem:changement_E}
  Assume \eqref{eq:expansion} holds and let $E'$ be
  a section satisfying \eqref{eq:hypotheseE} and \eqref{eq:normE}. Then there exists a family
  $(b'_{\ell})$ of $\Ci ( M^2 , B \boxtimes \con A )$ such that
  \eqref{eq:expansion} holds with $E'$ and $b'_{\ell}$ instead of $E$ and
  $b_{\ell}$. 
\end{lemme}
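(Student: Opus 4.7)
The strategy is to write $E' = E\, e^{\phi}$ on a neighborhood $W$ of $\op{diag}(M)$ for a smooth function $\phi$ vanishing to third order along the diagonal, and then Taylor-expand $e^{-k\phi}$ so that $E^k$ is replaced by $(E')^k$ in the expansion of $P_k$. Both $E$ and $E'$ are nonvanishing near the diagonal, so $E'/E$ is a well-defined smooth $\C^*$-valued function equal to $1$ on $\op{diag}(M)$; by the uniqueness statement following \eqref{eq:hypotheseE}, $E' - E$ vanishes to order $3$ along the diagonal, and hence so does $\phi := \log(E'/E)$ after $W$ is shrunk so that $E'/E$ lies in a simply connected neighborhood of $1$. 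Outside any slightly smaller neighborhood $W_0 \Subset W$, both $|E|$ and $|E'|$ are bounded by $1 - \ep$ for some $\ep > 0$ by \eqref{eq:normE} and compactness of $M^2$, so $E^k$ and $(E')^k$ are $\bigo(k^{-\infty})$ there and it suffices to construct the new expansion on $W_0$.

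On $W_0$ one has $E^k = (E')^k e^{-k\phi}$. Inserting Taylor's formula with integral remainder
\[
e^{-k\phi} \;=\; \sum_{j=0}^{J} \frac{(-k\phi)^j}{j!} \;+\; (-k\phi)^{J+1}\!\int_0^1 \frac{(1-t)^J}{J!}\, e^{-tk\phi}\, dt
\]
into the right-hand side of \eqref{eq:expansion} and re-indexing by $\ell' := \ell - 2j$ (so that $k^{j-\ell/2}$ becomes $k^{-\ell'/2}$) suggests the formal identity
\[
E^k \sum_\ell k^{-\ell/2} b_\ell \;\sim\; (E')^k \sum_{\ell'} k^{-\ell'/2}\, b'_{\ell'}, \qquad b'_{\ell'} := \sum_{j \geqslant 0} \frac{(-\phi)^j}{j!}\, b_{\ell' + 2j}.
\]
At each point of $\op{diag}(M)$ the formal series defining $b'_{\ell'}$ has only finitely many nonzero terms of any given Taylor order (its $j$-th summand vanishes to order $3j + m(\ell'+2j)$ along the diagonal), so Borel's theorem produces smooth sections $b'_{\ell'} \in \Ci(M^2, B \boxtimes \con A)$, supported in $W$, with this formal Taylor expansion along $\op{diag}(M)$.

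Set $m'(\ell') := \min_{j \geqslant 0}(3j + m(\ell'+2j))$. The rewriting $3j + m(\ell'+2j) + \ell' = j + (m(k)+k)$ with $k := \ell'+2j \geqslant \ell'$, together with $m(k)+k \geqslant 0$, gives $m'(\ell') + \ell' \geqslant 0$; and for any fixed $N$, the terms with $j \geqslant N$ automatically satisfy $j + (m(k)+k) \geqslant N$, while for the finitely many $j \in \{0,\dots,N-1\}$ one uses $m(\ell'+2j)+(\ell'+2j) \to \infty$ as $\ell' \to -\infty$. This shows $m'(\ell') + \ell' \to \infty$ as $\ell' \to -\infty$, so the pair $(b'_{\ell'}, m'(\ell'))$ satisfies condition (i) of the expansion. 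The main technical obstacle is controlling the Taylor remainder, because $e^{-k\phi}$ by itself grows exponentially in $k$; however, combined with $|E^k| = e^{-k\psi/2}$, where $\psi = -2\log|E| \sim d(x,y)^2$ near the diagonal and $|\phi| \leqslant C\, d(x,y)^3$ (since $\phi$ vanishes to order $3$), the substitution $t := k\, d(x,y)^2$ shows that on a sufficiently small neighborhood of $\op{diag}(M)$ (where $Ckd^3$ is absorbed by $kd^2/4$) one has
\[
|E^k|\, |k\phi|^{J+1}\, e^{sk|\phi|} \;\leqslant\; C_J\, k^{-(J+1)/2}
\]
uniformly in $s \in [0,1]$. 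Choosing $J = J(N)$ large enough and applying Lemma \ref{lem:estim_expansion} to the truncated sums yields the required remainder estimate $\bigo(k^{n-(N+1)/2})$, completing the proof.
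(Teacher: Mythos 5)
Your proof is correct and follows essentially the same route as the paper's: write $E^k = (E')^k e^{-k\phi}$ with $\phi$ vanishing to third order along the diagonal, Taylor-expand the exponential (you use the integral remainder where the paper uses the Lagrange-type bound $|r_N(z)|\leqslant |z|^{N+1}e^{|\op{Re}z|}/(N+1)!$, but these are interchangeable), control the remainder by playing the cubic vanishing of $\phi$ against the Gaussian decay $|E^k|=e^{-k\psi/2}$, and then Borel-sum the formal series $\sum_j (-\phi)^j b_{\ell'+2j}/j!$ to define $b'_{\ell'}$ with $m'(\ell')=\min_j\{3j+m(\ell'+2j)\}$. The only difference is cosmetic: you spell out the verification that $m'$ again satisfies condition (i) of \eqref{eq:expansion}, which the paper dismisses with ``we easily check.''
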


\begin{proof}
Observe first that \eqref{eq:expansion} holds
outside the diagonal if and only $|P_k (x,y)|$ is in $\bigo (
k^{-\infty})$ outside the diagonal, and this condition is clearly independent
of the choice of $E$ and the $b_{\ell}$'s. On a neighborhood of the diagonal, we have $E = e^{g}
E'$. Since $g \in \bigo ( 3)$, we can assume that \cB $|g| \leqslant \psi/8$. \clb Let us write
$$ P_k = \Bigl(\frac{k}{2 \pi}\Bigr)^n E^k
b_{N,k} + \bigo ( k^{n- \frac{N+1}{2}}) \quad \text{with } \quad b_{N,k} = \sum_{\ell + m ( \ell)
  \leqslant N} k^{-\frac{\ell}{2}}b_{\ell} .$$
By Lemma \ref{lem:estimate}, \cB $|E|^k b_{N,k} = \bigo (
e^{-k \frac{\psi}{4}})$. \clb 
Using that
$$ \exp z = \sum_{p = 0 }^N \frac{z^p }{p!} + r_N (z) \quad \text{ with }  \quad |r_{N} (z) | \leqslant \frac{|z|^{N+1} }{ (N+1)!} e^{|\op{Re} z |} ,$$
we deduce from $E^k = e^{kg } (E')^k$ that
\begin{gather} \label{eq:fin}
E^k b_{N,k} = (E')^k b_{N,k}    \sum_{p = 0 }^N   k^p \frac{g^p}{p!}  +
R_{N,k} 
\end{gather}
where \cB
$$| R_{N,k}  | \leqslant C_N  e^{-k  \frac{\psi}{4}   } |kg|^{N+1}  e^{ k
  |\op{Re} g |} \leqslant C_N  e^{-k  \frac{\psi}{8}   }
|kg|^{N+1} = \bigo ( k^{- \frac{N+1}{2}})$$
because $|\op{Re} g | \leqslant \psi/8$. \clb
To conclude now, it suffices to define the $b'_{\ell}$ so
that 
\begin{gather} \label{eq:last_goal}
(E')^k   \Bigl[ \sum_{\ell + m ( \ell)
  \leqslant N} k^{-\frac{\ell}{2}}b_{\ell} \Bigr] \Bigl[  \sum_{p = 0 }^N   k^p \frac{g^p}{p!}  \bigr]= (E')^k \sum_{\ell
  + m ' ( \ell) \leqslant N } k^{- \frac{\ell}{2}} b'_{\ell} + \bigo (
k^{-\frac{N+1}{2}}) 
\end{gather}
holds for any $N$. This suggests that each $b'_{\ell}$ should be equal to
the infinite sum
$$b_{\ell}
+  b_{\ell +2 } \, g + b_{\ell +4 } \frac{g^2}{2} + b_{\ell+ 6} \frac{g^3}{6}
+\ldots$$
But by Lemma \ref{lem:estimate}, the equality \eqref{eq:last_goal} depends only on the
class of $b'_{\ell}$ modulo $\bigo ( N- \ell)$, so we can interpret these
infinite sums as sums of Taylor expansions along the diagonal.
Since  $b_{\ell +2p} \, g^{p} = \bigo ( m(\ell +2 p ) + 3p ) = \bigo ( 3p)$, by
Borel lemma, there exists $b'_{\ell}$ such that for any $M$
\begin{gather} \label{eq:changement_E}
b'_{\ell}  = \sum_{p=0}^M   b_{\ell+ 2p } \frac{g^p}{p!} + \bigo (3(M+1))
\end{gather}
So $b'_{\ell} = \bigo ( m '(\ell) )$ with
$ m' (\ell ) := \op{min} \{ m( \ell + 2p ) + 3p , \; p \in \N \} .$ We easily
check that $m'$ satisfies the same condition as $m$.
We finally deduce \eqref{eq:last_goal}  by
removing with Lemma  \ref{lem:estimate} all the coefficients  leading to a $\bigo ( k^{-\frac{N+1}{2}})$.
\end{proof}

Suppose now we have an open set $U$ of $M$, and functions $u_i \in \Ci (U^2)$,
$i =1, \ldots , 2n$ vanishing along the diagonal and such that for any $y \in U$,  $(u_i(
\cdot , y))$ is a coordinate system on a neighborhood of $y$. Then we can write the Taylor expansions
along the diagonal as follows: any $ f \in \Ci (U^2)$ has a decomposition
\begin{gather} \label{eq:taylor_expansion}
f(x,y ) = \sum_{m=0}^M f_m (y, u(x,y)) + \bigo ( M+1 ) 
\end{gather}
where each $f_m (y,\xi)$ is homogeneous polynomial in $\xi$ with degree $m$. 
This can be done also for sections of $B \boxtimes \con{A}$ by introducing frames of
$A$ and $B$ on $U$, so that $\Ci (U^2 , B \boxtimes \con{A}) \simeq \Ci (U^2,  \C^r)$.
\begin{lemme} \label{lem:new_expansion}
  The expansion \eqref{eq:expansion} holds on $U^2$ if and only if there exists
  a sequence $(a_p )$ of $\Ci ( U \times \R^{2n}, \C^r)$, each $a_p
  (x,\xi)$ being polynomial in $\xi$, such that for any $N$
\begin{gather} \label{eq:new_expansion}
  P_k (x,y) = \kpi  E^k (x,y) \sum_{p=0}^{N} k^{-\frac{p}{2}} a_p ( y, k^{\frac{1}{2}} u
    (x,y)) + \bigo ( k^{n- \frac{N+1}{2}} ).  
  \end{gather}
\end{lemme}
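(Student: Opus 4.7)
The two expansions are related by the substitution $\xi = k^{1/2} u(x,y)$, which trades the vanishing order of the Taylor coefficients along the diagonal for powers of $k^{-1/2}$. I would prove each direction by Taylor-expanding the given coefficients along the diagonal, reorganizing the series, and using Borel's lemma and the remainder estimates of Lemma \ref{lem:estimate} to control the errors.

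\textbf{Direction $(\Rightarrow)$.} Assume \eqref{eq:expansion}. For each $\ell$, write the Taylor expansion of $b_\ell$ in the variable $u(x,y)$ along the diagonal:
$$b_\ell(x,y) = \sum_{m=m(\ell)}^{M} b_{\ell,m}\bigl(y, u(x,y)\bigr) + r_{\ell,M}(x,y),$$
where $b_{\ell,m}(y,\xi)$ is smooth in $y$ and homogeneous of degree $m$ in $\xi$, and $r_{\ell,M}$ vanishes to order $M+1$ along the diagonal. Homogeneity gives $b_{\ell,m}(y,u) = k^{-m/2} b_{\ell,m}(y,k^{1/2}u)$, hence
$$k^{-\ell/2} b_{\ell,m}(y,u(x,y)) = k^{-(\ell+m)/2} b_{\ell,m}\bigl(y,k^{1/2}u(x,y)\bigr).$$
Setting $p := \ell+m$ and
$$a_p(y,\xi) := \sum_{\ell\,:\,\ell+m(\ell)\le p} b_{\ell,p-\ell}(y,\xi),$$
which is a finite sum by condition (i) on $m$, we obtain a polynomial in $\xi$. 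Grouping the contributions in \eqref{eq:expansion} by fixed value of $p$ recovers $\sum_p k^{-p/2} a_p(y,k^{1/2}u)$. The Taylor remainders $k^{-\ell/2} E^k r_{\ell,M_\ell}$ contribute $\bigo(k^{n-\ell/2-(M_\ell+1)/2})$ by Lemma \ref{lem:estimate}, and by choosing each $M_\ell$ large enough (finitely many $\ell$'s matter for a given target order, by condition (i)), the total error is $\bigo(k^{n-(N+1)/2})$.

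\textbf{Direction $(\Leftarrow)$.} Assume \eqref{eq:new_expansion}. Decompose each polynomial $a_p(y,\xi) = \sum_{m=0}^{d(p)} a_{p,m}(y,\xi)$ into homogeneous parts of degree $m$ in $\xi$. For each $\ell \in \Z$, Borel's lemma in the variable $u(x,y)$ yields a smooth section $b_\ell \in \Ci(U^2,\C^r)$ whose Taylor expansion along the diagonal is the formal series
$$b_\ell(x,y) \sim \sum_{m\ge \max(0,-\ell)} a_{\ell+m,m}\bigl(y,u(x,y)\bigr).$$
This ensures $b_\ell$ vanishes to some order $m(\ell)\ge \max(0,-\ell)$, so $\ell+m(\ell)\ge 0$. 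Reversing the reorganization above rewrites the partial sum $\sum_{p=0}^N k^{-p/2} a_p(y,k^{1/2}u)$ as $\sum_\ell k^{-\ell/2} b_\ell$ up to Taylor remainders, which again contribute $\bigo(k^{n-(N+1)/2})$ by Lemma \ref{lem:estimate}.

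\textbf{Main obstacle.} The delicate point is verifying condition (i) on $m$ in direction $(\Leftarrow)$, namely that $\{\ell : \ell+m(\ell)\le N\}$ is finite for each $N$. For a fixed $\ell$, the nonzero formal Taylor coefficients of $b_\ell$ correspond to $p \ge \max(0,\ell)$ with $p-\ell \le d(p)$. Since each $a_p$ is polynomial of some fixed degree $d(p)$, the smallest admissible $m = p - \ell$ grows with $|\ell|$ whenever the degrees $d(p)$ are controlled (e.g.\ bounded on each bounded range of $p$ one cares about), so that $b_\ell$ either vanishes identically or vanishes to order tending to $\infty$ with $-\ell$. Padding $m(\ell)$ with trivial higher-order vanishing when $b_\ell \equiv 0$ allows us to impose the required growth of $\ell+m(\ell)$. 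Once (i) is secured, assertions (ii) and (iii) follow directly from the construction and Lemma \ref{lem:estimate}.
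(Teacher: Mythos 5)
Your proof follows essentially the same approach as the paper's: Taylor-expand the $b_\ell$ along the diagonal, use the homogeneity rescaling $b_{\ell,m}(y,u)=k^{-m/2}b_{\ell,m}(y,k^{1/2}u)$ to regroup by $p=\ell+m$ with $a_p=\sum_{\ell+m(\ell)\le p}b_{\ell,p-\ell}$, and conversely reconstruct the $b_\ell$ by Borel summation from the formal series $\sum_m a_{\ell+m,m}$, with Lemma \ref{lem:estimate} controlling remainders throughout. One small remark on your ``main obstacle'' paragraph: the finiteness of $\{\ell:\ell+m(\ell)\le N\}$ does not require any extra assumption that ``the degrees $d(p)$ are controlled''---it is automatic. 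Since $a_p=0$ for $p<0$ you already have $\ell+m(\ell)\ge 0$, hence $\ell\le N$; and if $a_{\ell+m,m}\neq 0$ with $\ell+m=p\le N$ then $m=p-\ell\le d(p)$, so $\ell\ge\min\{p-d(p):0\le p\le N\}$, a finite lower bound since each $a_p$ is a genuine polynomial. Stating it this way would tighten the last step; otherwise the argument is correct and matches the paper's.
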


The remainders in \eqref{eq:new_expansion} satisfy the same pointwise estimates as the
$R_{N,k}$ given in Lemma \ref{lem:estim_expansion}, the proof is identical. 

\begin{proof} If \eqref{eq:expansion} holds on $U^2$, writing the Taylor
    expansion of each $b_{\ell}$ as in \eqref{eq:taylor_expansion}, we have by
    Lemma \ref{lem:estimate}
 \begin{gather*}
    E^k (x,y) k^{-\frac{\ell}{2}} b_{\ell} (x,y)  = E^k (x,y) k^{-
      \frac{\ell}{2}} \sum_{m = m ( \ell) }^{N- \ell } b_{\ell,m} (y, u(x,y))  +
    \bigo( k^{-\frac{N+1}{2}})\\
      =E^k (x,y) k^{-\frac{\ell +m }{2}} \sum_{m = m ( \ell) }^{N- \ell } b_{\ell,m} (y,k^{\frac{1}{2}} u(x,y))  +
    \bigo( k^{-\frac{N+1}{2}})
  \end{gather*}
  So we obtain \eqref{eq:new_expansion} with $ a_p = \sum _{\ell + m ( \ell)
    \leqslant p } b_{\ell , p - \ell} $, this sum being finite
  because of the assumption satisfied by $m(\ell) $.

  Conversely, starting from the $a_{p}$'s,  for each $\ell \in \Z$, we
  construct by Borel summation a function $b_{\ell}$ such that $ b_{\ell} ( x,
  y)=
  \sum _{m =0 }^{M} a _{ \ell+m , m}(y,u (x,y))   + \bigo (M+1)$ for all $M$,
  where by convention $a_{p}=0$ for $p<0$, and $a_{m+\ell, m}$ is the degree $m$ homogeneous component of
  $a_{m+\ell}$. We readily deduce the expansion \eqref{eq:expansion} from
  \eqref{eq:new_expansion} by
  using Lemma \ref{lem:estimate} again.

  Observe that $b_{\ell} =
  \bigo(m(\ell))$ with $m(\ell)$ the smallest $m$ such that $a_{\ell + m ,
   m } \neq 0$.  Since $a_{p} = 0$ for $p<0$, we have $\ell + m(\ell) 
  \geqslant 0$. Furthermore, $\ell + m ( \ell) \leqslant N$ happens only if
  there exists $m \leqslant N - \ell$ such that $a_{\ell + m, m } \neq 0$,
  that is if there exists $p \leqslant N$ such that $ a _{p, p-\ell } \neq 0$,
  so necessarily $ p - \ell \leqslant d (p)$ where $d(p)$ is the degree of
  $a_p$. So $\ell + m (\ell) \leqslant N$ implies that $\ell \geqslant \min \{
  p - d(p) / \; p =0, \ldots, N \}$.  So $\ell + m (\ell) \leqslant N$ only
  for a finite number of $\ell$.
  \end{proof}
\cA  We have essentially proved Proposition  \ref{prop:un_de_plus}. Here are the details.
  
\begin{proof}[Proof of Proposition \ref{prop:un_de_plus}]
 Identify $U$ with an open convex set of
$\R^{2n}$, then the functions $u_i(x,y)= x_i -y_i$ satisfy the above
conditions.  And for $(x,y) = (x' + \xi' , x')$, we have $a_p ( y, k^{\frac{1}{2}}
u(x,y)) = a_p ( x', k^{\frac{1}{2}} \xi')$, so the expansions
\eqref{eq:new_expansion} and \eqref{eq:expansion_kernel} are the same when $E=
e^{-\varphi}$.
Now Proposition  \ref{prop:un_de_plus} follows from Lemma \ref{lem:new_expansion}, the local version of Lemma \ref{lem:changement_E} and the fact that $E=e^{-\varphi}$ in
\eqref{eq:expansion_kernel} satisfies the conditions \eqref{eq:hypotheseE}.
\end{proof}

It is the good place to prove Lemma \ref{lem:parite} on the characterization
of the parity in terms of local expansions. 
\begin{proof}[Proof of Lemma \ref{lem:parite}]
This follows  from the relation between the coefficients $a_{p}$ and the
coefficients $b_{\ell}$ given in the proof of Lemma \ref{lem:new_expansion}.
For instance, if $b_{\ell} =0$ for any odd integer $\ell$, then $b_{\ell,
  p-\ell} \neq 0$ only for even $\ell$ and in this case it has the same parity
as $p$, so $   a_p = \sum _{\ell + m ( \ell)
    \leqslant p } b_{\ell , p - \ell} $ has the same parity as $p$.
  Conversely, if $a_{p}$ has the same parity as $p$ for any $p$, then
  $a_{ \ell+m, m } =0$ for odd $\ell$, so $b_{\ell} $ vanishes to infinite
  order on the diagonal for odd $\ell$, so we can assume that $b_{\ell} =0$. The
  proof for odd elements is the same.
\end{proof}
\clb



\subsection{Filtration and symbol}
For an operator $(P_k ) \in \lag (A,B)$, we have two different ways of writing
the expansion of its Schwartz kernel: a global one \eqref{eq:expansion} with
coefficients $b_\ell$ and a local one \eqref{eq:new_expansion} with
coefficients $a_p$. We now discuss the uniqueness of these coefficients. 
Recall that $(P_k) \in \lag_q (A,B)$ if in all the
  local expansions \eqref{eq:new_expansion}, the coefficients $a_{p} $ are zero for
  $p < q$.
  \begin{prop} \label{prop:filtration-symbol}
    \hspace{1em}
    \begin{enumerate}
\item In the local expansions \eqref{eq:new_expansion}, the coefficients
  $a_{p}$ are uniquely determined by the section $E$, the functions $(u_i)$
  and the frames of $A$ and $B$.
  \item In the global expansion \eqref{eq:expansion}, the Taylor expansions of the
    coefficients $b_{\ell}$ along the diagonal are uniquely determined by the
    section $E$.
    \item    $  (P_k) \in \lag_q (A,B)$ iff $\bigl(  \forall \, \ell \in\Z, \;
      b_{\ell} = \bigo (  q - \ell) \bigr)$ iff $| P_k |  = \bigo ( k^{n-\frac{q}{2}}) $.
\item If $(P_k)  \in \lag_q (A,B)$, then the coefficient $a_q$ of the local
  expansion \eqref{eq:new_expansion}, viewed as a
  section of $ \pol ( TM ) \otimes B \otimes \con{A} \rightarrow U$
  does neither depend on $E$ nor on the functions $(u_i)$.  Furthermore,
  \begin{gather} \label{eq:a_n_fonction_b_ell}
    a_q = \sum_{\ell + m ( \ell) = q} b_{\ell, q - \ell}
  \end{gather}
  where the $b_{\ell}$ are
  the coefficients of the global expansion \eqref{eq:expansion} and $b_{\ell,
    q-\ell}$ is defined as in \eqref{eq:taylor_expansion}.   
\end{enumerate}
\end{prop}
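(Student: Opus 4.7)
The strategy I plan to use across parts (1)--(3) is near-diagonal testing: for $\xi\in T_yM$ fixed, set $x_k = y+k^{-1/2}\xi$ in local coordinates; then $|E^k(x_k,y)|$ converges to $e^{-|\xi|^2/4}$, so it stays bounded both above and away from zero, and dividing by $(k/2\pi)^n E^k$ along this restriction is harmless. For (1), given two local expansions of the form \eqref{eq:new_expansion} sharing the same $E$, coordinates $u_i$ and frames, I subtract them, divide by $(k/2\pi)^n E^k$, and restrict to $x=x_k$, obtaining $\sum_p k^{-p/2}(a_p-\tilde a_p)(y,\xi+\bigo(k^{-1/2}))=\bigo(k^{-(N+1)/2})$ for every $N$; since each $a_p$ is a $k$-independent polynomial in $\xi$, peeling coefficients one by one forces $a_p=\tilde a_p$ for every $p$. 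For (2), the proof of Lemma~\ref{lem:new_expansion} yields $a_p(y,\xi)=\sum_m b_{p-m,m}(y,\xi)$, where $b_{\ell,m}$ denotes the degree-$m$ homogeneous component of the Taylor expansion of $b_\ell$; these live in pairwise distinct degrees in $\xi$, so $b_{\ell,m}$ is the degree-$m$ homogeneous part of $a_{\ell+m}$, and uniqueness of all $a_p$ from (1) transfers to each Taylor coefficient of $b_\ell$.

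For (3) I plan to close a three-step loop. If $P\in\lag_q$, then $a_p=0$ for $p<q$, hence $b_{p-m,m}=0$, i.e.\ $b_{\ell,m}=0$ whenever $\ell+m<q$, which is precisely $b_\ell=\bigo(q-\ell)$. Conversely, if $b_\ell=\bigo(q-\ell)$ for every $\ell$, then Lemma~\ref{lem:estimate} applied to $(k/2\pi)^n E^k k^{-\ell/2} b_\ell$ gives a pointwise bound $\bigo(k^{n-q/2} e^{-k\psi/2})$ on each summand of the global expansion \eqref{eq:expansion}, and Lemma~\ref{lem:estim_expansion} controls the remainder for $N$ large, so $|P_k|=\bigo(k^{n-q/2})$. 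Finally, if $|P_k|=\bigo(k^{n-q/2})$, the near-diagonal division along $x=x_k$ yields $\sum_p k^{-p/2}a_p(y,\xi+\bigo(k^{-1/2}))=\bigo(k^{-q/2})$, and peeling the first $q$ coefficients gives $a_0=\cdots=a_{q-1}=0$, i.e.\ $P\in\lag_q$.

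For (4), the formula $a_q=\sum_{\ell+m(\ell)=q} b_{\ell,q-\ell}$ will follow from $a_p=\sum_m b_{p-m,m}$ at $p=q$: terms with $\ell+m(\ell)>q$ vanish by the $\bigo(m(\ell))$ condition on $b_\ell$, and by part (3) I may refine $m$ so that $m(\ell)\ge q-\ell$ for $\ell\le q$, leaving only the diagonal constraint $\ell+m(\ell)=q$. For invariance of $a_q$ under $E\to E'$, Lemma~\ref{lem:changement_E} writes $b'_\ell$ as $\sum_{p\ge 0} b_{\ell+2p}\, g^p/p!$ modulo sufficient vanishing, with $g=\bigo(3)$; when $P\in\lag_q$, $b_{\ell+2p}=\bigo(q-\ell-2p)$, so $b_{\ell+2p}g^p$ vanishes to order $\ge q-\ell+p$, and only $p=0$ contributes at degree $q-\ell$, yielding $b'_{\ell,q-\ell}=b_{\ell,q-\ell}$ and $a'_q=a_q$. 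For invariance under $u\to u'$ with $u'=A_y u+\bigo(u^2)$ (where $A_y$ is the linear change at the diagonal), the rescaling $\xi=k^{1/2}u$ turns $\bigo(u^2)$ into $\bigo(k^{-1/2})$, which is pushed into lower-order terms of the expansion; the linear part $A_y$ is then absorbed by the intrinsic identification $T_yM\to\R^{2n}$ given by the differential of $u$ at the diagonal, so $a_q$ descends to a well-defined section of $\pol(TM)\otimes B\otimes\con A$. The main obstacle I foresee is this last intrinsic interpretation, which requires simultaneously tracking how the nonlinear part of a coordinate change is absorbed at higher order in $k$ and how its linear part is compensated by the tangent-space identification used to define $\pol(T_yM)$.
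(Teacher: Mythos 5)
Your proposal is essentially correct and arrives at the same conclusions as the paper, but the mechanism you use for parts (1)--(3) is genuinely different. The paper's proof rests on the uniform two-sided estimate
\begin{equation*}
e^{-k\psi}\sum_{\ell=0}^q k^{-\ell/2} f_\ell = \bigo(k^{-q/2})\ \ \text{on } M^2 \quad\Longleftrightarrow\quad f_\ell = \bigo(q-\ell)\ \ \forall\ell,
\end{equation*}
where $\psi = -2\ln|E|$, the forward direction of which it imports from the earlier paper \cite{oim}; the paper also proves assertion (3) first and then derives (1) and (2) as the special case obtained by subtracting two expansions of the same operator and applying the iff with $q=\infty$. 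Your near-diagonal testing along $x_k = y + k^{-1/2}\xi$, dividing by $(k/2\pi)^n E^k$ and peeling coefficients of $k^{-p/2}$, is a self-contained substitute for the cited estimate. It works precisely because the $a_p(y,\cdot)$ are $\xi$-polynomials, so after the rescaling $k^{1/2}u(x_k,y)\to\xi$ each summand has an honest asymptotic in $k^{-1/2}$; the inductive peel is legitimate once you note that the vanished lower-order $a_p$'s contribute nothing through their Taylor corrections. What the paper's route buys is an estimate uniform on all of $M^2$ (useful elsewhere in \cite{oim}); what yours buys is independence from the cited reference.

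For part (4) your $E$-invariance argument via Lemma~\ref{lem:changement_E} matches the paper's exactly, including the arithmetic $m(\ell+2p)+3p\geqslant q-\ell+1$ for $p\geqslant 1$. For the $u$-invariance, however, you flag the intrinsic interpretation as "the main obstacle," and indeed your direct coordinate-tracking approach is the one place where your proof is not closed. The paper short-circuits this: once (3) gives $b_\ell = \bigo(q-\ell)$, each $b_{\ell,q-\ell}$ appearing in \eqref{eq:a_n_fonction_b_ell} is the leading Taylor coefficient, along the diagonal, of a function vanishing there to order $q-\ell$. Such a leading coefficient is canonically a homogeneous polynomial on the normal bundle of the diagonal, which is $TM$, independently of the coordinates $u_i$ used to compute it. That single observation resolves your obstacle without any bookkeeping of linear versus nonlinear parts; you should replace your coordinate-change argument by it.
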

\begin{proof}
 Assertions 1, 2 and 3 follow from the following facts:
Let $f_0$, \ldots, $f_q$ in $\Ci (M^2)$. Let $\psi = -2
\ln |E|$. Then 
\begin{gather} \label{eq:esti_iff}
e^{-k \psi} \sum_{\ell =0}^q k^{-\frac{\ell}{2}}
f_{\ell} = \bigo ( k^{-\frac{q}{2}}) \quad \Leftrightarrow \quad  f_0 \in \bigo
(q),\, \ldots ,\; f_q \in \bigo (0) .
\end{gather}
Indeed, recall that $\psi \geqslant 0 $, is in $\bigo ( 2 )$ and
its Hessian is non-degenerate in the direction
transverse to the diagonal. The converse of \eqref{eq:esti_iff} follows from the same proof as
Lemma \ref{lem:estimate}. The direct sense of \eqref{eq:esti_iff}  follows from  \cite[Proposition 2.4 and Remark 2.5]{oim}.

From this, we deduce that $|P_k | = \bigo ( k^{n -\frac{q}{2}})$ iff
$b_{\ell} = \bigo (q-\ell)$ for any $\ell$. Since $a_{p} = \sum_{\ell + m ( \ell) \leqslant
  p} b_{\ell , p -\ell}$ by the proof of Lemma \ref{lem:new_expansion}, $(b_\ell = \bigo (q-\ell)$ for any $\ell$) iff
$(a_p = 0,$ for any $p < q$). This last condition is the definition of
$\lag _q ( A,B)$. We have just proved Assertion 3. This implies that $|P_k |
=\bigo (k^{-\infty})$ iff ($a_{p} = 0$ for any $p$) iff $(b_{\ell} = \bigo(
\infty)$ for any $\ell$), which proves Assertions 1 and 2.

For the fourth assertion, since $P \in \lag_q(A,B)$, we have $b_{\ell} = \bigo
( q-\ell)$ for any $\ell$, so we can assume that $\ell + m ( \ell) \geqslant
q$, so
$$a_q =
\sum_{\ell + m ( \ell ) \leqslant q } b_{\ell, q- \ell} =\sum_{\ell + m ( \ell
  ) =  q } b_{\ell, q- \ell} .$$
Since $b_{\ell} = \bigo ( q-\ell)$,   the $(q-\ell)$-th order term in the Taylor expansion of
$b_{\ell}$ is intrinsically defined as a function
$$ \xi \in T_xM \rightarrow b_{\ell, q- \ell}
(x,\xi) \in B_x \otimes \con{A}_x,$$ so we can view $a_q(x,\cdot)$ as an element
of $\pol (T_xM) \otimes  B_x \otimes \con{A}_x$.

It remains to prove that for $\ell + m ( \ell) =q$, $b_{\ell, q-\ell}$ does
not depend on the choice of $E$. With the notation of the proof of Lemma
\ref{lem:changement_E}, this amounts to prove that $b'_{\ell} = b_{\ell} +
\bigo ( q-\ell +1)$. This follows from \eqref{eq:changement_E},
because $b_{\ell + 2p } g^p = \bigo ( m (\ell+ 2 p) +3 p)$ and $m(\ell +2p ) +
3p \geqslant q - (\ell+ 2p ) + 3p = q + p - \ell \geqslant q + 1 -\ell$ when $p
\geqslant 1$.
\end{proof}

We are now ready to define the symbol map
$$\si_q : \lag_q (A,B) \rightarrow \Ci (
\symb(M) \otimes \op{Hom} (A,B)).$$
First, for any $x \in M$, $T_xM$ is a
Hermitian space, so it has an associated algebra $\widetilde{\symb} ( T_xM)$
with a map $\op{Op}:\pol ( T_xM) \rightarrow \widetilde{\symb} ( T_xM)$ as in Section
    \ref{sec:landau-levels-cn}.

    For any $(P_k)$ in $\lag_q (A,B)$, by the
    fourth assertion of Proposition \ref{prop:filtration-symbol}, $a_q( x,
    \cdot) \in \pol ( T_xM) \otimes B_x \otimes \con{A}_x$. Identifying $B_x
    \otimes \con{A}_x$ with $ \op{Hom} (A_x , B_x)$, we set
\begin{gather} \label{eq:def_si_tilde}
    \widetilde{ \si}_q (P)  (x) := \op{Op} ( a_q(x, \cdot)) \in
    \widetilde{\symb} ( T_xM) \otimes \op{Hom}(A_x, B_x )
  \end{gather}
  Recall that we have an isomorphism $\widetilde{\symb} ( T_xM) \simeq \symb (T_xM)$
defined by restriction from $\pol ( T_xM)$ to $\D ( T_xM)$.
\begin{defin} \label{def:symbol}
  $\si _q (P)
(x) \in \symb ( T_xM) \otimes \op{Hom}(A_x, B_x )$ is defined as the
restriction  of $\widetilde{\si}_q (P) (x)$. 
\end{defin}
\subsection{Proofs of the results of Section \ref{sec:operators}} \label{sec:proofs-results} 
We now give the proof of Proposition  \ref{prop:lag}, Theorem
\ref{theo:lag} and Theorem \ref{theo:parity}.

\begin{proof}[Proof of Proposition \ref{prop:lag}]
The first assertion is an easy consequence of the definition of $\lag_q (A,B)$
by the local expansions. In the second assertion, the characterisation in
terms of pointwise norm is the third assertion of Proposition
\ref{prop:filtration-symbol}. By Lemma \ref{lem:estimate} or Lemma
\ref{lem:estim_expansion}, every $(P_k) \in \lag_q(A,B)$ satisfies $\| P_k \|
= \bigo ( k^{-\frac{q}{2}})$. For the converse, it suffices to show that if
$\si_0 ( P ) \neq 0$, then $\| P _k \| \geqslant c >0$.  This is a consequence
of Corollary \ref{cor:norm_estim}. 
The third assertion is straightforward.  The
fourth assertion is a variation on Borel Lemma, cf. for instance
\cite[Proposition 2.1]{oim}.
\end{proof}

\begin{proof}[Proof of Theorem \ref{theo:lag}]
In Definition \ref{def:symbol}, we have defined a map
  $$ \si_q : \lag_q(A,B) \rightarrow \Ci ( M , \symb (M) \otimes \op{Hom}
  (A,B)) .$$
  having kernel $\lag_{q+1} (A,B)$ by \cB the injectivity of $\op{Op}$, cf.
  Proposition \ref{prop:tilde_symb_and_op}. \clb
  To prove that it is surjective, we show
  that that for any $c \in \Ci ( M , \pol (TM) \otimes B \otimes \con{A})$,
there exists $P \in \lag_q (A,B)$ such that in the local expansions
  \eqref{eq:new_expansion},  $a_q =c$. To do this, let
  \cB $d  \in \N$ \clb be an upper bound of the degree of $c(x, \cdot)$ for any $x\in
   M$. For any \cB
  $m =0, \ldots , d$, \clb  let $c_m(x,\cdot)$ be the homogeneous component with
  degree $m$ of $c(x, \cdot)$. Choose a section $b_{q-m}$ of $B \boxtimes
  \con{A}$ vanishing to order $m$ along the diagonal and satisfying $b_{q-m} (x+ \xi, x) = c_m (x,\xi) + \bigo ( m+1)$. Then
  we set  \cB
   $$ P_k (x,y ) := \Bigl(\frac{k}{2\pi} \Bigr)^n E^k(x,y) \sum_{\ell = q-
    d }^q   k^{-
    \frac{\ell}{2}} b_{\ell} (x,y) .$$ \clb
  Since $b_\ell = \bigo ( q-\ell)$ for any $\ell$, $(P_k) \in \lag_q (A,B)$
  \cB by
  Assertion 3 of Proposition \ref{prop:filtration-symbol}. \clb
  By  \eqref{eq:a_n_fonction_b_ell}, we have 
$a_q = \cB \sum_{\ell = q-d}^q b_{\ell, q- \ell}   = \sum_{m=0}^{d} c_{m} \clb
=
c$, as was to be proved. 

Let us prove the remaining assertions of Theorem   \ref{theo:lag}. Let $P \in \lag_q (A,B)$.
Assertion 1, that is  $\si _q ( P) = \si_0 ( k^{q/2}
P)$, follows directly from the local expansions
\eqref{eq:new_expansion}. Let us prove Assertion 2. If $f \in \Ci ( M ,
\op{Hom} (B,C))$, then the Schwartz kernel of $ P_k'= f \circ P_k$ is $P_k' (x,y) =
f(x) (P_k (x,y))$, so $P_k'$ has the same expansion \eqref{eq:expansion} as
$P_k$ with $b'_{\ell} (x,y) = f(x)(b_{\ell}(x,y))$ instead of $b_{\ell}$,
which implies that $P'_k$ belongs to $\lag_q (A,B)$ with the same function
$\ell \mapsto m(\ell) $. Furthermore, with the notation \eqref{eq:taylor_expansion}, $b'_{\ell,
  m(\ell)} (x, \cdot ) = f(x) b_{\ell, m(\ell)} (x, \cdot)$, which implies by
\eqref{eq:a_n_fonction_b_ell} that $\si_q( P')(x) = f(x) \circ \si_q(P)(x)$.

Let us prove Assertion 3. Since $P_k^* (x,y) = \con{P_k (y,x)}$, the Schwartz
kernel of $P_k^*$ has the expansion \eqref{eq:expansion} with $E' (x,y) =
\con{E (y,x)}$ instead of $E$ and $b'_{\ell} (x,y) = \con{b_{\ell} (y,x)}$
instead of $b_{\ell}$. By \eqref{eq:symE}, we deduce that $(P_k^*) \in \lag_q
(B,A)$. Furthermore, $b'_{\ell, m(\ell)} (x, \xi) = \con{b}_{\ell,m(\ell)}(x,
- \xi)$ so $a_q' (x,\xi) = \con{a}_q (x, -\xi)$. By \eqref{eq:sch_ker},
$\op{Op} (q )^* = \op{Op} (r)$ with $r( \xi ) =   \con{q} ( - \xi)$, so $\si_q(P^*) = \si_q(P)^*$. 

Let us prove assertion 5. By \eqref{eq:new_expansion},
$$ P_k(x,x) = \frac{k^{n-q/2}}{ (2\pi)^n} \bigl( a_q(x,0 ) + \bigo (
k^{-\frac{1}{2}} ) \bigr) $$
and by the first equation of \eqref{eq:rel_bizare}, $a_q(x,0) = \op{tr}
(\si_q(P) (x))$.

\cB Let us prove half of Assertion 6. More precisely, we will deduce from
Assertion 4  that for any $P \in \lag (A,B)$, we have
\begin{gather} \label{eq:tbp}
\limsup_{k \rightarrow \infty} \| P_k \| \leqslant \sup_{x \in M} \|
  \si_0 (P)(x) \| 
\end{gather}
With the lower bound provided by Corollary \ref{cor:norm_estim}, this will
show Assertion 6. Let $f :=  \si_0(P)^* \si_0 ( P)$.  Let  $m \in \N$ be
sufficiently large so that $\pi f \pi = f $ where for any $x \in M$, $\pi (x)
\in \symb (T_xM)
\otimes \op{End} A_x $ is the selfadjoint projector onto $\D_{\leqslant m } (T_xM) \otimes
A_x$.  Then for any $C >   \sup_{x \in M} \|
  \si_0 (P)(x) \|$, there exists a symbol $g \in \Ci ( M , \symb (M) \otimes
  \op{End} A)$ such that $g ^* = g$, $\pi g \pi = g $, $g^2 = C^2 \pi - f$.
  Indeed, $g(x)$ is zero on $\D_p (T_xM) \otimes A_x$ for any $ p > m$,
  and $g(x) $ is the positive square root of $C  - f$ on $\D_{\leqslant m}
  (T_xM ) \otimes A_x$. Let $\Pi$ and $Q$ in $\lag (A)$ be self-adjoint and
  having symbol $\pi$ and $g$ respectively. Then by Assertion 4,
  $$ C^2 \Pi^2 - P^* P = Q^2 + R $$
  with $R \in \lag_1 (A,B)$. So
\begin{xalignat*}{2} 
  \| P_k \Psi \| ^2 = \langle P_k^* P_k \Psi, \Psi \rangle = & C^2 \| \Pi_k
  \Psi \|^2 - \| Q_k  \Psi \|^2 - \langle R_k \Psi , \Psi \rangle \\ \leqslant
  & (
  C^2 \| \Pi_k \|^2 + C' k^{-\frac{1}{2}} ) \| \Psi \|^2 . 
\end{xalignat*}
by Assertion 2 of Proposition \ref{prop:lag}. Since $\pi^2 = \pi$, by
Assertion 4 and Assertion 2 of Proposition  \ref{prop:lag} again, $\Pi_k^2 -
\Pi_k = \bigo ( k^{-\frac{1}{2}})$ so $\| \Pi_k \| \leqslant 1 + \bigo (
k^{-\frac{1}{2}})$. Consequently, $\| P_k \| \leqslant C ( 1 + \bigo (
k^{-\frac{1}{2}}))$ which implies \eqref{eq:tbp}. 
\clb

It remains to prove Assertion 4. Let $(P'_k) \in
\lag_{q'} (B,C)$. We will prove that $Q_k = P'_k \circ P_k$ belong to $\lag
_{q'' } ( A,C)$ with $q'' = q  + q'$ and compute its symbol.  Since the composition of operators 
with kernels in $\bigo ( k^{-p})$ and $\bigo ( k^{-\ell})$ respectively,
has a kernel in $\bigo  ( k^{-(p+\ell)})$, we can consider each summand of the
expansions \eqref{eq:expansion} for $P_k$ and $P'_k$ separately. In other
words, we can assume that $P_k = \bigl( \frac{k}{2\pi} \bigr)^n E^k f$
and $P'_k = \bigl( \frac{k}{2\pi} \bigr)^n E^k f'$ with $f = \bigo ( q)$ and $f' = \bigo (q')$. So 
\begin{gather} \label{eq:R_kernel}
  Q_k (x,z) = \Bigl( \frac{k}{2\pi} \Bigr)^{2n}  \int_M  (E(x,y) \cdot E(y,z)
  )^k g(x,y,z) d \mu_M (y)
\end{gather}
with $g(x,y,z ) = f(x,y) f' (y,z)$. Observe that $g$ vanishes to order $q''$
along $\Si = \{ (x,y,z) \in M^3 , \; x= y =
z \}$.

By \eqref{eq:normE}, $|E(x,y) \cdot
E(y,z) | < 1 $ if $(x,y,z) \notin \Si$. This implies first that the Schwartz
kernel of $(Q_k)$ is in $\bigo ( k^{-\infty})$ outside the diagonal.
Furthermore, to compute $Q_k$ on a neighborhood of $(p,p)$ up to a $\bigo (
k^{-\infty})$, we can reduce the integral \eqref{eq:R_kernel} to a
neighborhood of $p$. 
So we can work locally. 

Introduce a local
orthonormal frame $(\partial_i, i =1, \ldots , n)$ of $T^{1,0}M$ on an open
neighborhood $U$ of
$p$ in $M$. Let  $\si_i \in \Ci (U^2)$, $i =1,\ldots n$ be such that
\begin{gather} \label{eq:dsi_i--partial_j}
d\si_i ( \partial_j, 0 ) = \delta_{ij} + \bigo (1) , \qquad d\si_i ( \con{\partial}_j, 0)
= \bigo (1)
\end{gather}
for any $i$ and $j$.
Observe that if the $z_i$ are
coordinates as in \eqref{eq:coordonnees_z}, then
\begin{gather} \label{eq:si_z}
\si_i (x,y) = z_i(x) - z_i
(y) + \bigo_{(p_0, p_0)} (2).
\end{gather}
So we can use the functions $u_i = \op{Re} \si _i$ and $u_{i+n}
= \op{Im} \si_i$ when we write the Taylor expansion
\eqref{eq:taylor_expansion} and the local expansion \eqref{eq:new_expansion}.

Restricting $U$ if necessary, we can assume that for any $z$, the map $
y\in U  \rightarrow (\si_i (y,z)) \in \C^n$ is a diffeomorphism onto its
image. Let $\mu_z$ be the pull-back of the volume $\mu_n $ by this map. By \eqref{eq:si_z}, we have $\mu_M (y) = \rho (y,z) \mu_z (y)$ with
$\rho \in \Ci (U^2)$ satisfying $\rho (y,y) = 1$. 

Now using the expressions \eqref{eq:E_linearise} and \eqref{eq:si_z}, we
readily prove that  
$$  E(x,y) \cdot E(y,z) = e^{ \varphi (x,y,z) + r(x,y,z) } E
(x,z), $$
where $r(x,y,z) = \bigo_{\Si} (3)$ and $ \varphi(x,y,z) =  (\si (x,z) - \si
  (y,z))\cdot \con{\si}  (y,z) $. 
Arguing as in the proof of Lemma \ref{lem:changement_E}, it comes that
$$ ( E(x,y) \cdot E(y,z) )^k  = E^k(x,z) e^{k \varphi ( x,y,z)} \sum_{\ell =0 }^{N}
\frac{k^{\ell}}{\ell !}  (r (x,y,z))^{\ell} + \bigo \bigl( k^{-\frac{1}{2} (N+1) } \bigr)$$
so the integrand of \eqref{eq:R_kernel} is equal to
$$ E^k(x,z) e^{k \varphi ( x,y,z)} \sum_{\ell =0 }^{N}
k^{\ell} g_{\ell}  (x,y,z) \; d\mu_z (y)   + \bigo \bigl( k^{-\frac{1}{2} ( q''
  + N+1)} \bigr)
$$
with $g_{\ell} (x,y,z) = \rho(y,z) g (x,y,z)  (r (x,y,z)) ^\ell / (\ell !) =  \bigo_{\Si} ( q''+ 3\ell)$. 

For any $z \in U$, we write the Taylor expansion of $(x,y) \rightarrow
g_{\ell}(x, y,z)$ at $(z,z)$ with the
coordinates system
$$(x,y) \rightarrow \op{Re} \si_i (x,z), \op{Im} \si_i (x,z) ,
\op{Re} \si_i (y,z) , \op{Im} \si_i (y,z).$$
We obtain 
$$ g_\ell(x,y,z) = \sum_{m= q'' + 3 \ell}^{p}  h_{\ell,m} (z, \si (x,z), \si (y,z))
+ \bigo_{\Si} ( p+1) $$
with $h_{\ell,m} ( z, \xi , \eta)$ homogeneous polynomial in $\xi, \eta$ with
degree $m$.  Arguing as in Lemma \ref{lem:new_expansion}, we obtain that
\begin{gather} \label{eq:dev_loc_R}
Q_k (x,z) = \Bigl(\frac{k}{2\pi} \Bigr)^n   E^k(x,z) \sum_{\ell= 0}^{N}
\sum_{m= q''+ 3 \ell}^{q''+ 2 \ell +N}
k^{\ell - \frac{m}{2}}   I_{\ell,m} (x,z) +  \bigo \bigl( k^{-\frac{1}{2} ( q''
  + N+1)} \bigr)
\end{gather}
with
$$ I_{\ell,m} (x,z) =  \Bigl(\frac{k}{2\pi} \Bigr)^n  \int_U e^{k
  \varphi (x,y,z) } h_{\ell,m} (z, k^{\frac{1}{2}} \si (x,z) , k^{\frac{1}{2}}
\si (y,z) ) \; d\mu_z (y) $$
Set $u_i = \si_i (x,z)$ and let us use the coordinates $v_i = \si_i (y,z)$ for the
integration so that $\varphi (x,y,z) = u \cdot \con v - |v|^2$ and $d\mu_z (y) =  |dv d \con{v}|$. It comes that
$ I_{\ell, m} (x,z) = J_{\ell, m} ( z, k^{\frac{1}{2}} \si ( x,z))$ with 
\begin{gather} \label{eq:j_ell_m}
J_{\ell , m }  (z, u ) = \Bigl(\frac{k}{2\pi} \Bigr)^n \int e^{
  k^{\frac{1}{2}} u \cdot \con v - k |v|^2} h_{\ell,m} ( z, u , k^{\frac{1}{2}
} v )  \; d \mu_n (v) 
\end{gather}
where we integrate on a neighborhood of the origin in $\C^n$. We can actually
integrate on $\C^n$ because this will modify $E^k(x,z)  I_{\ell,m} (x,z)$
by a $\bigo ( e^{-k/C})$. Indeed, $| E(x,z) | = e^{-\frac{1}{2} |u|^2} + \bigo
(|u|^3)$ so $|E(x,z) | = \bigo ( e^{-\frac{1}{3} |u|^2})$ so
$$|E(x,z) e^{u
  \cdot \con v - |v|^2} | = \bigo ( e^{ - \frac{1}{3} |u|^2 + |uv| - |v|^2 } ) = \bigo (
e^{ -\frac{1}{4} |v| ^2} ) $$
and we conclude by using that $ \int_{|v|\geqslant \epsilon} e^{- \frac{k}{4} |v|^2} |v|^m \; |dv d
\con{v}| = \bigo ( e^{-k/C})$ for any $\ep >0$ and $m \in \N$. 

Taking the integral
\eqref{eq:j_ell_m} over $\C^n$, it comes that
\begin{gather} \label{eq:j_ell-m}
  J_{\ell,m}  ( z,u) = (2 \pi)^{-n} \int_{\C^n} e^{u \cdot \con v - | v| ^2 }
h_{\ell, m }  ( z, u, v)  \; d\mu_n (v)
\end{gather}
So $J_{\ell,m}$ does not depend on $k$. Furthermore it is polynomial in $u$.
To see this, it suffices to view $h_{\ell, m } ( z,u, v)$ as a polynomial in the
variables $u-v$, $v$ and to compare with the formula \eqref{eq:sch_ker}. So
$Q_k (x,z)$ has the local expansion \eqref{eq:new_expansion}, so $(Q_k)$  belongs to
$\lag _{q''} ( A, C)$. Its symbol  is given by the
leading order term in \eqref{eq:dev_loc_R} which corresponds to $\ell =0$ and
$m = q''$, that is
$$\widetilde{\si}_{q''} ( Q ) (x) =  \op{Op} ( J_{0, q''} (x,
\cdot)).$$
We can compute it in terms of the symbols of $P$ and $P'$ as follows:
 by \eqref{eq:a_n_fonction_b_ell}, $\widetilde \si_{q} ( P)(x) = \op{Op} ( a_q(x, \cdot))$ where $\xi \rightarrow
a_q(x,\xi)$ is the homogeneous polynomial of degree $q$ such that $f(x,y) = a_q (y, \si
( x,y) ) + \bigo ( q+1)$. Similarly, $\widetilde \si_{q'} ( P')(x) = \op{Op} (
a'_{q'}(x, \cdot))$ with $f'(x,y) = a'_{q'} (y, \si
( x,y) ) + \bigo ( q'+1)$. Now by  \eqref{eq:si_z}, $\si (x,y) = \si (x,z) - \si ( y,z)
+ \bigo _{\Si} (2)$ and  it comes that
\begin{xalignat*}{2}
g(x,y,z) & =  a_q ( y, \si (x,z) - \si ( y,z)) a'_{q'} ( z, \si (y,z)) +
\bigo_{\Si} (q'') \\
& =   a_q ( z, \si (x,z) - \si ( y,z)) a'_{q'} ( z, \si (y,z)) +
\bigo_{\Si} (q'') 
\end{xalignat*}
leading to  $ h_{0, q''} (z, u,v) = a_q ( z, u - v) a'_{q'} (z,v) $ and using
first \eqref{eq:sch_ker} and then \eqref{eq:rel_bizare}, we have that  
\begin{xalignat*}{2} 
  \widetilde{\si}_{q''} ( Q )(x) & =  \op{Op} ( J_{0, q''} (x,
\cdot)) = \op{Op} \bigl( \op{Op} (a_q ( x,\cdot))  a'_{q'} (x,\cdot) \bigr) \\
& =  \op{Op}
(a_q ( x,\cdot)) \circ \op{Op} (a'_{q'} (x,\cdot))   = \widetilde \si_{q} ( P)
(x)
\circ \widetilde \si_{q'} ( P' )(x)
\end{xalignat*}
as was to be proved. 
\end{proof}

\begin{proof}[Proof of theorem \ref{theo:parity}]
The first assertion follows from the definition of the parity and Proposition
\ref{prop:filtration-symbol}.  For the composition, it suffices to consider
the case treated in the previous proof: we start from $(P_k)$ and $(P'_k)$
both even. Since $h_{\ell,m}(x,\cdot) $ has degree $m$, $J_{\ell,m}(x, \cdot)$ given by
\eqref{eq:j_ell-m} has the same parity as $m$, so by \eqref{eq:dev_loc_R},
$(P'_k \circ P_k)$ is even. Last assertion is simply the fact that $\op{Op}
(a_q(x,\cdot) )$ has the same parity as $a_q (x,\cdot)$ by Proposition  \ref{prop:tilde_symb_and_op}.
\end{proof}

\subsection{Peaked sections}
In this section, we state and prove a generalisation of Proposition
\ref{prop:peaked-sections}.  Consider an auxiliary bundle $A$. Let us choose a base point $x \in M$, with a coordinate chart $U$ centered at
$x$, and a trivialisation $A|_U \simeq U \times A_x$.  To any $f \in \pol (T_xM
) \otimes A_x$, we associate the section of $L^k \otimes A$
\begin{gather} \label{eq:phi_peaked_section_+}
\Phi_k^f (x+\xi   ) = \Bigl( \frac{k}{2\pi} \Bigr)^{\frac{n}{2}} E^k (x+ \xi, x
) \, f(k^{\frac{1}{2}} \xi) \, \psi (x+ \xi) 
\end{gather}
where $E$ is chosen as in Section \ref{sec:section-ee}, and $\psi \in \Ci_0(U)$ is equal
to $1$ on a neighborhood of $x$.

\cB The space $\pol (T_xM
) \otimes A_x$ has a natural scalar product obtained by tensoring  the scalar
product \eqref{eq:scal_prod_2} of $\pol (T_xM
)$  with the Hermitian metric of $A$.\clb

\begin{prop} \label{prop:peaked-sections_++}
  $ $
  \begin{enumerate}
    \item   For any $f$, $g \in \pol ( T_x M ) \otimes A_x$, $\bigl\langle
      \Phi_k^f, \Phi_k^g \bigr\rangle = \langle f , g \rangle + \bigo
      (k^{-\frac{1}{2}} )$.
      \item For any $f  \in \pol ( T_x M ) \otimes A_x$ and $Q \in \lag (
        A,B)$, $ Q_k \Phi_k^f = \Phi_k^h + \bigo ( k^{-\frac{1}{2}})$ where
        $h = \tilde{\si}_0 ( Q)(x) \cdot f \in \pol ( T_xM) \otimes B_x$. 
      \end{enumerate}
      \end{prop}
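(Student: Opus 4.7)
The plan is to unpack both statements in local coordinates around $x$ and to reduce everything to Gaussian integrals that converge to the Bargmann model, using the second-order description of $E$ near the diagonal from \eqref{eq:hypotheseE} and the expansion \eqref{eq:expansion_kernel} of $Q_k$.

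For Part 1, I would start from
\[
\langle \Phi_k^f, \Phi_k^g\rangle = \int_U \Bigl(\frac{k}{2\pi}\Bigr)^n |E(x+\xi,x)|^{2k}\,(f(k^{1/2}\xi),g(k^{1/2}\xi))_{A_x}\,\psi(x+\xi)^2\, d\mu_M(x+\xi).
\]
Rescale $\xi = k^{-1/2}\eta$. The hypothesis \eqref{eq:hypotheseE} together with $\psi_x(\xi) := -2\ln|E(x+\xi,x)|$ vanishing to second order at $0$ with transverse Hessian $\tfrac12 g_x$ gives $|E(x+\xi,x)|^{2k} = \exp(-|\eta|^2/2 + O(k^{-1/2}|\eta|^3))$ after rescaling, where $|\,\cdot\,|$ is the Riemannian norm on $T_xM$. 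Identifying $T_xM$ with $\C^n$ via a unitary basis of $T_x^{1,0}M$ (as in \eqref{eq:coordonnees_z}), one checks that the resulting weight matches the Bargmann weight $e^{-|z|^2}$ and the rescaled volume density converges to $(2\pi)^{-n}d\mu_n$. Since $\psi \equiv 1$ near $x$, the tail contribution is $O(k^{-\infty})$. Taylor-expanding $|E|^{2k}$ and the volume Jacobian, and using the Gaussian decay to absorb polynomial factors, one obtains the leading term $\langle f,g\rangle$ with remainder $O(k^{-1/2})$.

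For Part 2, I would compute $(Q_k\Phi_k^f)(y)$ as the integral of $Q_k(y,z)\Phi_k^f(z)$ against $d\mu_M(z)$. Outside a small neighborhood of $x$, $\Phi_k^f$ is $O(k^{-\infty})$ pointwise, and for $y$ outside a neighborhood of $\operatorname{supp}\psi$, the off-diagonal bound on $Q_k$ gives $(Q_k\Phi_k^f)(y) = O(k^{-\infty})$, matching $\Phi_k^h(y)=0$ there. For $y$ near $x$, set $z = x + k^{-1/2}\eta$ and $y = x + k^{-1/2}\tilde\eta$, and use the local expansion \eqref{eq:expansion_kernel} of $Q_k$ plus the composition formula for $E$ obtained in the proof of Theorem~\ref{theo:lag}, namely $E(y,z)E(z,x) = e^{\varphi(y,z,x)+r(y,z,x)}\,E(y,x)$ with $\varphi(y,z,x)=(\sigma(y,x)-\sigma(z,x))\cdot\overline{\sigma(z,x)}$ and $r = \bigo_\Sigma(3)$. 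After rescaling, $k\varphi \to (\tilde\eta-\eta)\cdot\bar\eta = \tilde\eta\cdot\bar\eta - |\eta|^2$ and $kr = O(k^{-1/2})$. Combining the factors $(k/2\pi)^{3n/2}\cdot k^{-n} = (k/2\pi)^{n/2}(2\pi)^{-n}$ and keeping only the $p=0$ term, one obtains
\[
(Q_k\Phi_k^f)(y) = \Bigl(\frac{k}{2\pi}\Bigr)^{n/2}\!\! E^k(y,x)\,\psi(y)\,(2\pi)^{-n}\!\int_{\C^n}\! e^{\tilde\eta\cdot\bar\eta - |\eta|^2} a_0(x,\tilde\eta-\eta)\,f(\eta)\,d\mu_n(\eta) + O(k^{-1/2}),
\]
and by \eqref{eq:sch_ker} the integral equals $[\operatorname{Op}(a_0(x,\cdot))f](\tilde\eta) = [\widetilde\sigma_0(Q)(x)\cdot f](\tilde\eta) = h(\tilde\eta)$. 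Pointwise this gives $\Phi_k^h(y)$ modulo $O(k^{-1/2})$, and an $L^2$ estimate then follows by Schur's test with the Gaussian weights, exactly as in the proof of Lemma~\ref{lem:estimate}.

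The main technical obstacle is the careful bookkeeping in Part 2: combining $E^k(y,z)E^k(z,x)$ into $E^k(y,x)$ times a controlled exponential factor, matching the rescaled Gaussian $e^{\tilde\eta\bar\eta - |\eta|^2}$ with the Bargmann kernel from the definition of $\operatorname{Op}$, and showing that the higher-order terms in the expansion of $Q_k$ and in the expansions of $\varphi$, $r$, the volume density, and the polynomial $a_0(z,\cdot) = a_0(x,\cdot)+O(k^{-1/2})$ all contribute remainders of size $O(k^{-1/2})$ once the Gaussian is used to absorb the polynomial growth in $\eta$ and $\tilde\eta$.
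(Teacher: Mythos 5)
Your proposal is correct in substance but takes a genuinely different and more laborious route from the paper. You compute the scalar product and the action of $Q_k$ on the peaked section directly, by rescaling to Gaussian integrals and redoing the local analysis — in effect re-deriving the key parts of Theorem~\ref{theo:lag} (composition and diagonal asymptotics) from scratch. The paper instead makes the single observation that the peaked section is, up to the normalization $(k/2\pi)^{n/2}$, a Schwartz-kernel slice of an operator $P^f \in \lag(\C, A)$ whose kernel is $P^f_k(x+\xi, x) = (k/2\pi)^n E^k(x+\xi,x) f(k^{1/2}\xi)\psi(x+\xi)\psi(x)$, with $\widetilde\sigma_0(P^f)(x) = \op{Op}(f)$. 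Part 1 then reads off $\langle\Phi_k^f,\Phi_k^g\rangle$ from the diagonal value of $(P^g_k)^*P^f_k$, using Part~5 of Theorem~\ref{theo:lag} and the trace identity $\op{tr}(\op{Op}(g)^*\op{Op}(f)) = (\op{Op}(g)^* f)(0) = \langle f,g\rangle$ from \eqref{eq:rel_bizare}; Part 2 reads off $Q_k\Phi_k^f$ as a slice of $Q_kP^f_k$, whose symbol is $\widetilde\sigma_0(Q)(x)\circ\op{Op}(f) = \op{Op}(h)$. Your explicit approach does work, and it makes the Gaussian mechanism more transparent, but it duplicates the stationary-phase bookkeeping already done in Section~\ref{sec:proofs-results}; the paper's proof is shorter because it treats Theorem~\ref{theo:lag} as a black box, which is precisely what the symbol calculus is designed for. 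If you keep your route, make sure to state explicitly in Part 1 that you are using the conventions $|\xi|^2_g = 2|z(\xi)|^2$ and $d\mu_M \leftrightarrow \mu_n$ so that the exponent $-\tfrac14|\xi|^2_g$ matches the Bargmann weight $e^{-|z|^2}$, and in Part 2 that the pointwise remainder estimate promotes to the required $L^2$ estimate by the same Gaussian integration as in Part 1.
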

In the second part, we used the symbol $\tilde{\si}_0 (P)$ defined in
\eqref{eq:def_si_tilde}, and $\Phi_k^{h}$ is defined as $\Phi_k^f$ with a
trivialisation of $B$.

\begin{proof} Consider the operator $P^f \in \lag (\C, A)$ with Schwartz
  kernel \cB
  $$P^f_k (y+ \xi, y) = \Bigl ( \frac{k}{2\pi} \Bigr)^n E^k (y+ \xi, y) \, f (
  k^{\frac{1}{2}} \xi)\,  \psi (y+ \xi) \, \psi (y).$$\clb 
On one hand, $( \frac{k}{2\pi})^{\frac{n}{2}} \Phi^f_k = P_k^f (\cdot ,x)$. On
the other hand, $\tilde{\si}_0(P^f)(x) = \op{Op} ( f)$. So we can compute
the scalar product of $\Phi^f_k$ and $\Phi^g_k$ as a composition of Schwartz
kernels 
\begin{xalignat*}{2}
\Bigl ( \frac{k}{2\pi} \Bigr)^n \bigl\langle \Phi_k^f, \Phi_k^g \bigr\rangle & =
((P_k^g)^* P_k^f)(x,x) \\ & = \Bigl(\frac{k}{2\pi} \Bigr)^n \op{tr}_{\D ( T_xM)} ( \op{Op} (g)^*
\op{Op} (f))  + \bigo ( k^{n-\frac{1}{2}})
\end{xalignat*}
by the last part of Theorem \ref{theo:lag}. \cB To conclude, we have by
the second equation of \eqref{eq:rel_bizare} with $\op{Op} (q) = \op{Op} (g)^*$ that  $ \op{Op} (g)^* \op{Op} (f)  = \op{Op} (  \op{Op}
(g)^* f) $ and then by the first and third equations of \eqref{eq:rel_bizare} 
$$  \op{tr}_{\D ( T_xM)} (\op{Op} (  \op{Op} (g)^* f)) = (\op{Op} (g)^* f )(0)
= \langle f , g \rangle .$$  \clb
The proof of the second part is similar, we have
$$ \Bigl ( \frac{k}{2\pi} \Bigr)^{\frac{n}{2}} Q_k \Phi^f_k = (Q_k
P_k^f)(\cdot, x) $$
By Theorem \ref{theo:lag},  $(Q_k P_k^f) \in \lag ( \C,B)$ with symbol at $x$
equal to $$\tilde{\si}_0 (Q) (x) \circ \op{Op} ( f)
= \op{Op} (\tilde{\si}_0 (Q) (x)  f  ) = \op{Op} (h)$$ by
\eqref{eq:rel_bizare}. \cB So by the local expansion \eqref{eq:expansion_kernel},
$$ ( Q_k P_k )( \cdot ,x ) = \Bigl(\frac{k}{2\pi} \Bigr)^{\frac{n}{2}} \Phi_k^h
 + r_k $$
with $r_k =R_{k} ( \cdot, x)$ where $(R_{k}) \in \lag_ 1 (\C,B)$. Finally, $\|
r_k \|^2 =  ( R_k^*  R_k ) (x,x) = \bigo ( k^{n-1})$ because $(R_k^*R_k ) \in
\lag_2 ( \C)$ by Theorem \ref{theo:lag}.      \clb
\end{proof}

We deduce the following lower bound for the operator norm of operators of
$\lag (A,B)$. If $\rho \in \symb_x (M) \otimes \op{Hom} (A_x, B_x)$, then we
denote by $\| \rho \|$ the norm
$$ \| \rho \| = \sup \bigl\{  \| \rho f \| / \| f \| ,\; f \in \D (T_xM)
\otimes A_x, \; f \neq 0  \bigr\}.$$

\begin{cor} \label{cor:norm_estim}
  For any $P \in \lag (A,B)$, we have
  $$ \liminf_{k \rightarrow \infty} \| P_k \| \geqslant \sup_{x \in M} \|
  \si_0 (P)(x) \| $$
\end{cor}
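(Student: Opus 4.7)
The plan is to use the peaked sections of Proposition \ref{prop:peaked-sections_++} as near-eigenvectors that witness the size of the symbol at each point. Fix $x_0 \in M$, and let $f \in \D(T_{x_0} M) \otimes A_{x_0}$ be any nonzero element. I would form the peaked section $\Phi^f_k$ by \eqref{eq:phi_peaked_section_+}, and then observe that $h := \widetilde{\si}_0(P)(x_0) \cdot f$ lies in $\D(T_{x_0} M) \otimes B_{x_0}$, because elements of $\widetilde{\symb}(T_{x_0}M)$ preserve $\D(T_{x_0}M)$ and their restriction to $\D$ coincides with the corresponding element of $\symb(T_{x_0}M)$; so in fact $h = \si_0(P)(x_0) \cdot f$, and $\|h\|$ is the relevant norm on $\D$.

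From Proposition \ref{prop:peaked-sections_++}, I have the two asymptotics $\|\Phi^f_k\| = \|f\| + \bigo(k^{-1/2})$ and $P_k \Phi^f_k = \Phi^h_k + \bigo(k^{-1/2})$. Combining with $\|\Phi^h_k\| = \|h\| + \bigo(k^{-1/2})$ and the reverse triangle inequality gives
\[
\|P_k\| \;\geqslant\; \frac{\|P_k \Phi^f_k\|}{\|\Phi^f_k\|} \;\geqslant\; \frac{\|h\| + \bigo(k^{-1/2})}{\|f\| + \bigo(k^{-1/2})}.
\]
Letting $k \to \infty$ yields $\liminf_k \|P_k\| \geqslant \|\si_0(P)(x_0) \cdot f\|/\|f\|$.

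Finally I would take the supremum over nonzero $f \in \D(T_{x_0}M) \otimes A_{x_0}$ on the right, which by definition of the operator norm on $\si_0(P)(x_0)$ (recall $\si_0(P)(x_0) \in \symb(T_{x_0}M) \otimes \op{Hom}(A_{x_0}, B_{x_0})$ is viewed as an endomorphism of $\D(T_{x_0}M) \otimes A_{x_0}$) gives $\liminf_k \|P_k\| \geqslant \|\si_0(P)(x_0)\|$. Then taking the supremum over $x_0 \in M$ completes the argument.

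There is essentially no obstacle, since all the substantive work was done in proving Proposition \ref{prop:peaked-sections_++}; the only subtle point worth mentioning is that we must restrict the test elements $f$ to the subspace $\D(T_{x_0}M) \otimes A_{x_0}$ rather than the full polynomial space $\pol(T_{x_0}M) \otimes A_{x_0}$, so that $\|\Phi^f_k\|$ and $\|\Phi^h_k\|$ are genuinely controlled and the operator norm computed matches the one defined for $\si_0(P)(x_0)$ in the statement.
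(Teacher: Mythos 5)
Your proof is correct and follows essentially the same route as the paper's: test $P_k$ against the peaked sections $\Phi_k^f$ for $f\in\D(T_{x_0}M)\otimes A_{x_0}$, invoke both parts of Proposition \ref{prop:peaked-sections_++} to get $\|\Phi_k^f\|=\|f\|+\bigo(k^{-1/2})$ and $P_k\Phi_k^f=\Phi_k^{\si_0(P)(x_0)f}+\bigo(k^{-1/2})$, pass to the limit in the quotient, and take suprema over $f$ and $x_0$. The remark about restricting the test vectors to $\D(T_{x_0}M)\otimes A_{x_0}$ (rather than $\pol$) is exactly the right precaution, matching the norm on $\symb_x(M)\otimes\op{Hom}(A_x,B_x)$ defined just before the corollary.
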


\begin{proof}
By Proposition \ref{prop:peaked-sections_++}, for any  $f \in \D (
T_xM) \otimes A_x$ non zero,
$$  \frac{\| P_k \Phi_k^f \|}{\| \Phi_k^f \| } = \frac{ \| \si_0 (P) (x) f
  \|}{\| f \| } + \bigo ( k^{-\frac{1}{2}}) .$$
So $ \liminf_{k \rightarrow \infty} \| P_k \| \geqslant  \| \si_0 (P) (x) f
  \| / \| f \|$ .
\end{proof}

\section{Derivatives} \label{sec:derivatives}

The class $\lag (A, B)$ has been defined without any control on the
derivatives of the Schwartz kernels. The reason was merely to simplify the
exposition but in the applications it is natural and necessary to understand
the composition of operators of $\lag(A, B)$ with covariant derivatives. 
 We start from general considerations, then we define a subclass $\lag
 ^{\infty}(A,B)$ where the asymptotic expansion of the Schwartz kernels hold with respect to a convenient $C^{\infty}$ topology. Finaly we apply
 this to complete the proofs of the theorems stated in the introduction. 
\subsection{The class \texorpdfstring{$\biginf (k^{-N})$}{O-infinity}} 

Consider as before a Hermitian line bundle $L \rightarrow M$ and an auxiliary
Hermitian vector bundle  $A
\rightarrow M$.
 Let $\mathcal{F}$ be the space of families 
$$s= (s_k \in \Ci (M , L^k \otimes A)
,\,  k \in \N ).$$ 
Recall that $s \in \bigo ( k^{-N})$ if for any $x \in M$, $|s_k (x)| = \bigo ( k^{-N})$ with a
$\bigo $ uniform on any compact subsets of $M$. Here we do not assume that $M$
is compact.

The definition of $\biginf$ involves the derivatives. If $L$ and $A$ are trivial
bundles so that $(s_k)$ is a sequence of $\Ci ( M, \C^r)$ with $r$ the rank of
$A$, then we say that $(s_k) \in \biginf ( k^{-N})$ if for any $m \in
\N$, the derivatives of order $m$ of $(s_k)$ are in $\bigo ( k^{-N+m})$. More
precisely, for any vector fields $X_1$, \ldots, $X_m$ of $M$, we require that
\begin{gather} \label{eq:derk}
X_1 \ldots X_m s_k = \bigo ( k^{-N+m}) .
\end{gather} 
So we loose one power of $k$ for each derivative. Because of this, the class
$\biginf ( k^{-N})$ is invariant by multiplication by $e^{ik h}$, where $h$ is
any real-valued function of $M$.

For actual vector
bundles $L$ and $A$, we introduce unitary frames $u$ and $(v_j)_{j=1}^{r}$ of $L$
and $A$ over the same open set $U$ of $M$ and write $s_k = \sum f_{k,j} u^k
\otimes v_j$ with $f_{k} \in \Ci ( U, \C^r)$. Then we say that $(s_k)$ belongs
to $\biginf ( k^{-N})$ if for all choices of unitary frames of $L$ and $A$,
the corresponding local representative sequence $(f_k)$ is in $\biginf ( k^{-N})$.
Observe that changing the frame $u$ of $L$ amounts to multiply $f_k$ by
$e^{ikh}$, so the condition that $f_k \in \biginf ( k^{-N})$ does not depend
on the frame choice when these frames are
defined on the same open set. 

The typical example of a family in $\biginf ( k^{-N})$  is an oscillating
sequence
$$s_k (x)  = k^{-N}
e^{-k \varphi (x) } a (x) $$
with $ \varphi \in \Ci ( M)$ having a non negative real part
and $a \in \Ci ( M, \C^r)$. More generally, for actual bundles, we can set
$$ s_k (x) 
= k^{-N} E^k(x)  a(x) $$ where $E \in \Ci ( M, L)$ is such that $|E|\leqslant 1$ and $a
\in \Ci ( M, A)$.

Obviously, if $N' \geqslant N$,  $\biginf ( k^{-N'} ) \subset \biginf ( k^{-N})$. Define  $\biginf (k^{-\infty}
) := \cap_N \biginf ( k^{-N})$. We will need the following result.

\begin{lemme} \label{lem:dev_der_bor}
Let $(s_\ell)$ be a sequence of $\mathcal{F}$ such that for any
$\ell$, $s_{\ell} \in
\biginf ( k^{-p(\ell)} ) $ where $(p(\ell))$ is an increasing real sequence, and $p(\ell)
\rightarrow \infty$ as $\ell \rightarrow \infty$. Then  
\begin{enumerate}
  \item 
There exists $s \in \Fam \cap \biginf ( k^{-p(0)})$ unique modulo $\biginf ( k^{-\infty}
)$ such that
\begin{gather} \label{eq:dev_Der}
s_k = \sum_{\ell =0}^{N-1} s_{\ell,k} + \biginf ( k^{-p(N)}) , \qquad
\forall N 
\end{gather}
\item Let $s \in \mathcal{F}$ such that $s \in \biginf (k^{p})$ for some
  $p$ and $s_k =
  \sum_{\ell =0}^{N-1} s_{\ell,k} + \bigo ( k^{-p(N)})$ for any $N$. Then $s
  \in \biginf (k^{-p(0)})$ and \eqref{eq:dev_Der} holds.
\end{enumerate}
\end{lemme}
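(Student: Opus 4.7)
The plan is Borel summation in the parameter $k$, followed (for Part 2) by a Landau--Kolmogorov interpolation argument to promote pointwise decay to decay of all derivatives. Without loss of generality (by grouping consecutive terms with equal $p$) we may assume $p(\ell)$ is strictly increasing.

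For Part 1, I would choose a compact exhaustion $K_0 \Subset K_1 \Subset \cdots$ of $M$ and unitary local frames of $L$ and $A$ around each $K_\ell$. The hypothesis $s_\ell \in \biginf(k^{-p(\ell)})$ provides, for every $m$, a bound $|\partial^\alpha s_{\ell,k}| \leq C_{\ell,m} k^{-p(\ell)+m}$ on $K_\ell$ for $|\alpha|=m$ in these local frames. Since $p(\ell)>p(\ell-1)$, one can choose integers $\lambda_\ell \to \infty$ such that for every $k \geq \lambda_\ell$ and every $m \leq \ell$ this bound is dominated by $2^{-\ell} k^{-p(\ell-1)+m}$. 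Set $\chi_\ell(k) = 1$ if $k \geq \lambda_\ell$ and $0$ otherwise, and define
\begin{gather*}
s_k := \sum_{\ell \in \N} \chi_\ell(k)\, s_{\ell,k},
\end{gather*}
a locally finite sum for each $k$, hence a smooth section. For each $N$ write
\begin{gather*}
s_k - \sum_{\ell = 0}^{N-1} s_{\ell,k} = -\sum_{\ell=0}^{N-1} (1-\chi_\ell(k))\, s_{\ell,k} + \sum_{\ell \geq N} \chi_\ell(k)\, s_{\ell,k}.
\end{gather*}
The first piece lies in $\biginf(k^{-\infty})$ (each term is supported on a bounded set of $k$); in the second, the term $\ell = N$ equals $s_{N,k}$ modulo $\biginf(k^{-\infty})$ and thus lies in $\biginf(k^{-p(N)})$, while for $\ell > N$ one has $k^{-p(\ell-1)+m} \leq k^{-p(N)+m}$, so the tail is bounded in every $C^m$-norm on any compact by $\sum_{\ell > N} 2^{-\ell} k^{-p(N)+m}$. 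Hence the whole right-hand side is in $\biginf(k^{-p(N)})$, proving \eqref{eq:dev_Der}; taking $N=0$ also gives $s \in \biginf(k^{-p(0)})$. Uniqueness modulo $\biginf(k^{-\infty})$ is immediate since the difference of two candidates lies in $\biginf(k^{-p(N)})$ for every $N$.

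For Part 2, I would use Part 1 to construct $s' \in \biginf(k^{-p(0)})$ satisfying \eqref{eq:dev_Der}, and set $t := s - s'$. Subtracting the two expansions at each $N$ gives $t_k \in \bigo(k^{-N})$ for every $N$, i.e.\ pointwise $\bigo(k^{-\infty})$ decay; together with $s \in \biginf(k^p)$ and $s' \in \biginf(k^{-p(0)})$ this yields $t \in \biginf(k^q)$ for $q := \max(p, -p(0))$. The remaining step is to upgrade this pointwise decay to decay of all derivatives, for which I would apply the standard Landau--Kolmogorov interpolation: for $u \in C^M$ on a bounded open set,
\begin{gather*}
\|\partial^\alpha u\|_\infty \leq C_M\, \|u\|_\infty^{1 - |\alpha|/M}\, \|u\|_{C^M}^{|\alpha|/M}, \qquad |\alpha| \leq M.
\end{gather*}
Applied locally to $u = t_k$ with $\|t_k\|_\infty = \bigo(k^{-N})$ and $\|t_k\|_{C^M} = \bigo(k^{M+q})$, this gives $\|\partial^\alpha t_k\|_\infty = \bigo\bigl(k^{-N(1-|\alpha|/M) + (M+q)|\alpha|/M}\bigr)$; for a fixed $|\alpha|$ and target rate $N'$, taking $M = 2|\alpha|$ and then $N$ large makes the exponent $\leq -N'$, so $t \in \biginf(k^{-\infty})$ and hence $s = s' + t \in \biginf(k^{-p(0)})$ satisfies \eqref{eq:dev_Der}.

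The main obstacle is the bookkeeping in the Part 1 cutoff construction -- arranging the single sequence $\lambda_\ell$ to simultaneously control all relevant derivative orders and truncation indices, so that the geometric factor $2^{-\ell}$ absorbs the factor $k^{-(p(\ell)-p(\ell-1))}$ uniformly on each $K_\ell$ in every chosen local frame. The Part 2 interpolation step is standard once pointwise $\bigo(k^{-\infty})$ decay is granted.
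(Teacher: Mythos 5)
Your proposal is correct and follows the same route the paper indicates: a Borel-type summation (with cutoffs $\chi_\ell(k)$ chosen so that the $\ell$-th term is switched on only when it is dominated by $2^{-\ell}k^{-p(\ell-1)+m}$ in every $C^m$-seminorm with $m\leqslant \ell$) for Part 1, and an interpolation inequality to upgrade the pointwise $\bigo(k^{-\infty})$ decay of $t=s-s'$ to $\biginf(k^{-\infty})$ for Part 2. The paper's proof is only the one-line pointer to Borel's lemma and to the interpolation inequalities of \cite[Lemma 32]{Shubin}, which is exactly the two-step structure you have filled in.
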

 
The first part is a variation of Borel Lemma, the second part follows from
interpolation inequalities, cf. as instance \cite[Lemma 32.]{Shubin}

\cB In the sequel, we will apply this material to Schwartz kernels. So
instead of $M$, $L$, $A$, we will have $M^2$, $L \boxtimes \con{L}$ and $B
\boxtimes \con{A}$.  \clb
\subsection{Application to \texorpdfstring{$\lag (A,B)$}{L(A,B)}}

Choose a section $E$ as in section \ref{sec:section-ee} and let $b \in \Ci ( M^2
, B \boxtimes \con{A})$ vanishing to order $m$ along the diagonal. Then by the
same proof as Lemma \ref{lem:estimate}, the family $ (E^k b)$ is in
$\biginf (k^{-\frac{m}{2}})$. Actually, we even have a better result if instead
of using any derivatives, we only consider covariant derivatives for the
connection of $(L \boxtimes \con{L})^k \otimes (B \boxtimes \con{A})$ induced
by the connection of $L$ and any connections of $A$ and $B$.  
\begin{lemme} \label{lem:comp_der_lag}
 For any $\ell \in \N$, any vector fields $X_1$,  \ldots, $X_{\ell}$ of $M^2$, we have
  $ \nabla _{X_1} \ldots \nabla_{X_\ell} (E^k b) $ is in $\bigo (
  k^{-\frac{1}{2} ( m - \ell)})$.
\end{lemme}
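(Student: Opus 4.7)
The plan is to reduce to a pointwise estimate in a trivialising neighbourhood of a diagonal point $(p_0,p_0)$ of $M^2$. Outside such neighbourhoods $|E|$ is bounded away from $1$ on the relevant compacts, so $|E|^k$ decays like $e^{-k/C}$; since the connection adds at most polynomial factors of $k$, the estimate is automatic (in fact in $\biginf(k^{-\infty})$). Near the diagonal, in a unitary frame $t$ of $L$ over an open set $U$ and local trivialisations of $A$ and $B$, write $E = \phi \cdot t \otimes \con{t}$ with $\phi$ smooth and non-vanishing near the diagonal. A direct computation in this frame gives
\[ \nabla E^k = k\,E^k \otimes \ga, \qquad \ga := d\log\phi + \tfrac{1}{i}(\al(x) - \al(y)), \]
a $\C$-valued $1$-form on $U^2$.

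The crucial point is that $\ga$ vanishes on the diagonal in every direction tangent to $M^2$: the $x$-component at $(y,y)$ is zero by the hypothesis $(\nabla E_y)(y)=0$ translated into the frame, and the $y$-component then vanishes either by direct calculation with the explicit $\ph$ from \eqref{eq:exempleE}, or by invoking the symmetry \eqref{eq:symE}. An induction based on Leibniz yields $\nabla^r E^k = E^k \otimes T_r$ with
\[ T_r = \sum_{j=1}^{r} k^j\,U_{r,j}, \qquad U_{r,j} = \sum (\nabla^{p_1}\ga) \otimes \cdots \otimes (\nabla^{p_j}\ga), \]
the inner sum running over indices $p_s \geqslant 0$ with $\sum_s p_s = r-j$. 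Since $\ga$ vanishes to order $1$ on the diagonal, each factor with $p_s=0$ contributes an extra $\bigo(\de)$ where $\de$ denotes the distance to the diagonal, and the number of such factors is at least $2j-r$ (because at most $r-j$ of the $p_s$ can be positive, as their sum is $r-j$). Hence $|U_{r,j}| = \bigo(\de^{\max(0,\,2j-r)})$.

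To conclude, apply Leibniz to $E^k\,b$. Because $b$ vanishes to order $m$ on the diagonal, $|\nabla^{\ell-r}b| = \bigo(\de^{\max(0,\,m-\ell+r)})$, and combining with $|E|^k \leqslant e^{-k\psi/2}$, $\psi \geqslant c\,\de^2$, each summand of the Leibniz expansion is controlled by
\[ C\,k^j\,e^{-kc\de^2/2}\,\de^{\max(0,\,2j-r)\,+\,\max(0,\,m-\ell+r)}. \]
Since $(k^{1/2}\de)^{a} e^{-kc\de^2/4}$ is uniformly bounded for every $a \geqslant 0$, a short case analysis on the signs of $2j-r$ and $m-\ell+r$ shows that the powers of $k$ and $\de$ balance exactly to give $\bigo(k^{-(m-\ell)/2})$ in every case, which is the claimed bound. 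The main obstacle is verifying the diagonal vanishing of $\ga$ in the $y$-direction: the hypothesis \eqref{eq:hypotheseE} is not manifestly symmetric in $(x,y)$, so one must exploit \eqref{eq:symE} (or \eqref{eq:exempleE}) to recover it; once this is in place the proof is pure bookkeeping on top of the Gaussian-polynomial estimate already used in Lemma \ref{lem:estimate}.
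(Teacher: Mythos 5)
Your proof is correct and follows the same route as the paper: once one knows that $\nabla E$ vanishes along the diagonal, so that near it $\nabla_X E = f E$ with $f \in \bigo(1)$, each covariant derivative of $E^k b$ trades a factor of $k$ for an extra order of vanishing, and the Gaussian balance from Lemma \ref{lem:estimate} finishes the estimate; your explicit bookkeeping with the tuples $(p_1,\dots,p_j)$ is just a more granular version of the paper's statement that the coefficient of $k^j$ vanishes to order $m-\ell+2j$. The only place you take a detour is in verifying $\nabla E|_{\op{diag}} = 0$ in the $y$-direction: you call on the symmetry \eqref{eq:symE}, whereas the paper decomposes $T_{(y,y)}M^2$ into first-factor tangents (second equation of \eqref{eq:hypotheseE}) and diagonal tangents, along which $E$ restricts to the unit section $u\otimes\con{u}$ of $L\otimes\con{L}$, which is parallel because the connection is Hermitian — so the first equation of \eqref{eq:hypotheseE} already closes the argument without \eqref{eq:symE}, a slightly cleaner route than the one you flagged as the main obstacle.
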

The improvement is that we only loose a half power of $k$ for each derivative.

\begin{proof} The main observation is that $\nabla E $ vanishes on the
  diagonal. Indeed, $\nabla_{X} E =0$ on the diagonal when $X$ is tangent to
  the first factor because of the second equation in \eqref{eq:hypotheseE},
  but also when $X$ is tangent to the diagonal by the first equation in
  \eqref{eq:hypotheseE}. So on a neighborhood of the diagonal, we have \clr $\nabla_X
  E = f E$ \clb with $f \in \bigo (1)$. By Leibniz rule,
  $ \nabla_X ( E^k b) = E^k ( k f b + \nabla_X b ) $. Using this repeatedly,
  we obtain
  $$ \nabla _{X_1} \ldots \nabla_{X_\ell} (E^k b) = E^k ( k^{\ell} b_{\ell} +
  k^{\ell -1} b_{\ell-1} + \ldots + b_0 )$$ where
  $ b_{\ell} = \bigo ( m+ \ell )$, $b_{\ell-1 } \in \bigo ( m+ \ell - 2)$,
  \ldots, $b_0  \in \bigo ( m - \ell)$.  And we conclude as in the proof of
  Lemma \ref{lem:estimate}.
\end{proof}

Recall that the Schwartz kernel family of an operator $P \in \lag (A,B)$ has by definition
an expansion of the form 
\begin{gather} \label{eq:exp_der_part}
P_k (x,y) = \Bigl(\frac{k}{2\pi} \Bigr)^n E^k(x,y) \sum_{ \ell + m ( \ell ) \leqslant N } k^{- \frac{\ell}{2}} b_{\ell} (x,y)
  + R_{N,k} (x,y)  
\end{gather}
with $R_{N,k} \in \bigo ( k^{n- \frac{N+1}{2}})$. \cB Let $\lag^{\infty} (A,B)$
(resp.  $\lag_q^{\infty} (A,B)$) be the subspace of $\lag (A,B)$ (resp. $
\lag_q (A,B)$) consisting of the operator families having a Schwartz kernel in $\biginf
(k^n)$. \clb
Identifying operators and their kernels, 
$$\lag^{\infty} (A,B) = \lag (A,B) \cap \biginf (k^n), \quad \lag_q^{\infty} (A,B) = \lag_q (A,B) \cap \biginf (k^n)$$
By the following proposition, these new classes have the same properties than the $\lag (A,B)$ and this
follows directly from Lemma \ref{lem:dev_der_bor}.

\begin{prop} \label{prop:versionlinfty}$ $
  \begin{enumerate}
\item  \label{item:1}    If $P \in
\lag^{\infty} (A,B)$ and the expansion \eqref{eq:exp_der_part} holds with
$R_{N,k} \in \bigo ( k^{n- \frac{N+1}{2}})$ for any $N$, then $R_{N,k} \in
\biginf ( k^{n- \frac{N+1}{2}})$ for any $N$.
\item  \label{item:2}  For any $P \in \lag (A,B)$ there exist $Q \in
\lag^{\infty} (A,B)$ unique modulo $\biginf (k^{-\infty})$ such that $Q = P +
\bigo ( k^{-\infty})$.
\item  \label{item:3} \clr For any $ P \in \lag^{\infty}(A,B
  )$,
  \begin{enumerate}
    \item \label{item:a} the adjoint of $P$ belongs to $\lag^{\infty} (B,A)$, 
  \item \label{item:b} $(P_kQ_k) \in \lag ^{\infty} ( A,C)$ for any $Q \in \lag ^{\infty} ( B,C)$,
  \item \label{item:c} $(f \circ P_k)$ belongs to $\lag^{\infty} ( A,C)$ for any $f \in \Ci (M ,
    \op{Hom} (B,C))$; $(P_k \circ g) $ belongs to  $\lag ^{\infty} ( C,B)$ for any
    $g \in  \Ci ( M , \op{Hom} (C,A))$
    \item  \label{item:d} for any vector field $X$ of $M$ and connections on $A$ and $B$,
      $(k^{-\frac{1}{2}} P_k \circ \nabla^{L^k \otimes A}_X) $ and
      $(k^{-\frac{1}{2}} \nabla^{L^k \otimes B}_X \circ P_k)$  belong to
      $\lag^{\infty} ( A,B)$.

      Furthermore, if $(P_k)$ is even (resp. odd),
      these two operators are odd (resp. even). 
  \end{enumerate} \clb  
\item  \label{item:4} $\lag_q^{\infty} (A,B) = \lag (A,B) \cap \biginf ( k^{n -
    \frac{q}{2}})$, the restriction of $\si_q$ to $\lag_q^{\infty} ( A,B)$ is
  onto and has kernel $\lag_{q+1} ^{\infty} ( A,B)$. 
\end{enumerate}
\end{prop}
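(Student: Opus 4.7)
The proposition is a package of ``upgrade'' statements, transferring pointwise asymptotic information about the Schwartz kernels of $\lag(A,B)$ to asymptotic information for all derivatives, with the correct loss of $k^{1/2}$ per derivative. The proof rests on two tools already in place: Lemma \ref{lem:comp_der_lag}, which guarantees that each summand $(\tfrac{k}{2\pi})^n E^k b_\ell$ in the expansion \eqref{eq:exp_der_part} lies in $\biginf(k^{n-(\ell+m(\ell))/2})$ (the vanishing of $\nabla E$ on the diagonal is what produces the $1/2$ exponent rather than $1$), and Lemma \ref{lem:dev_der_bor}, which provides Borel summation and interpolation in the $\biginf$ class.

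\emph{Parts \ref{item:1} and \ref{item:2}.} For each $\ell$ Lemma \ref{lem:comp_der_lag} gives $T_{\ell,k}:=(\tfrac{k}{2\pi})^n E^k k^{-\ell/2}b_\ell \in \biginf(k^{n-(\ell+m(\ell))/2})$. Since the exponents $(\ell+m(\ell))/2$ tend to $+\infty$, one can reindex the $T_\ell$ by an increasing sequence of orders and apply Lemma \ref{lem:dev_der_bor} (first part) to produce $Q \in \biginf(k^n)$ realizing the same asymptotic expansion as $P$; by Proposition \ref{prop:global_local_expansion} this $Q$ lies in $\lag(A,B)$, hence in $\lag^\infty(A,B)$, and $Q-P=\bigo(k^{-\infty})$; the uniqueness clause of the same lemma upgrades this to $\biginf(k^{-\infty})$, giving \ref{item:2}. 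For \ref{item:1}, if $P\in\lag^\infty(A,B)$ then $R_{N,k}$ is the difference between $P_k\in\biginf(k^n)$ and a finite sum of terms each in $\biginf$, so $R_{N,k}\in\biginf(k^n)$; combined with the pointwise hypothesis $R_{N,k}=\bigo(k^{n-(N+1)/2})$, the second part of Lemma \ref{lem:dev_der_bor} yields $R_{N,k}\in\biginf(k^{n-(N+1)/2})$.

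\emph{Part \ref{item:3}.} For \ref{item:a}, $P_k^*(x,y)=\overline{P_k(y,x)}^{\,*}$, and the symmetry \eqref{eq:symE} of $E$ together with \ref{item:2} shows that the asymptotic expansion of $P^*$ is of the right form in $\biginf$. Item \ref{item:c} is straightforward: multiplication by a smooth endomorphism commutes with $E^k$ and introduces only bounded derivative factors by Leibniz. For \ref{item:b}, the composition formula \eqref{eq:R_kernel} involves $(E(x,y)\cdot E(y,z))^k$, which near the triple diagonal factors as $e^{k\varphi+kr}E(x,z)^k$ with $r=\bigo_\Sigma(3)$ as in the proof of Theorem \ref{theo:lag}; differentiating under the integral sign and bounding each term of the expansion through Lemma \ref{lem:comp_der_lag} gives the $\biginf$ estimate. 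Item \ref{item:d} is the content of the $k^{1/2}$--loss: writing $\nabla^{L^k}_X E=kfE$ with $f$ vanishing on the diagonal, each covariant derivative of an expansion term $E^k k^{-\ell/2}b_\ell$ produces either $E^k k^{1-\ell/2}fb_\ell$ (with $fb_\ell$ vanishing to order $m(\ell)+1$, contributing one extra half--power of $k$) or $E^k k^{-\ell/2}\nabla_X b_\ell$ (with $\nabla_X b_\ell$ vanishing to one order less); dividing by $k^{1/2}$ returns the expression to $\lag^\infty$. The parity flip comes from the fact that each derivative shifts the degree of the leading polynomial coefficient $a_p(x,\cdot)$ by one.

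\emph{Part \ref{item:4} and obstacle.} The characterization $\lag_q^\infty(A,B)=\lag(A,B)\cap\biginf(k^{n-q/2})$ is immediate from \ref{item:1} applied with $b_\ell=0$ for $\ell<q$. Surjectivity of $\si_q$ restricted to $\lag_q^\infty$ holds because the explicit preimage constructed in the proof of Theorem \ref{theo:lag} is a \emph{finite} sum of terms of the form $(\tfrac{k}{2\pi})^n E^k k^{-\ell/2}b_\ell$, already in $\lag^\infty$ by Lemma \ref{lem:comp_der_lag}; the kernel equals $\lag_{q+1}^\infty$ by the same symbol--vanishing argument as in Theorem \ref{theo:lag} combined with \ref{item:1}. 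The subtlest step is \ref{item:d}, which makes the framework compatible with the first--order differential operators $\nabla^{L^k\otimes A}_X$ entering all the later applications; once the vanishing of $\nabla E$ along the diagonal encoded in \eqref{eq:hypotheseE} is exploited as in Lemma \ref{lem:comp_der_lag}, everything else reduces to bookkeeping with Lemma \ref{lem:dev_der_bor}.
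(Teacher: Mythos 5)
Your proof tracks the paper's argument closely for parts \ref{item:1}, \ref{item:2}, \ref{item:a}, \ref{item:c}, \ref{item:d} and \ref{item:4}: the tools are identical (Lemma \ref{lem:comp_der_lag} for the $\biginf$ bound on each summand $E^k b_\ell$, Lemma \ref{lem:dev_der_bor} for Borel summation in part \ref{item:2} and for interpolation in parts \ref{item:1} and \ref{item:4}), and the logic is the same. The one place you diverge is \ref{item:b}. You propose to re-run the stationary-phase computation of Theorem \ref{theo:lag}(4) with derivative control at every step, which would work but is laborious. The paper's proof is a bootstrap: (i) the composition of two Schwartz kernels in $\biginf(k^n)$ is crudely in $\biginf(k^{2n})$, which is a soft Leibniz/H\"older estimate on the integral kernel and needs no stationary phase; (ii) one already knows $PQ\in\lag(A,C)$ with expansion starting at order $k^n$ from Theorem \ref{theo:lag}(4); (iii) the interpolation statement in the second part of Lemma \ref{lem:dev_der_bor} then upgrades the crude $\biginf(k^{2n})$ membership to $\biginf(k^n)$, so $PQ\in\biginf(k^n)\cap\lag(A,C)=\lag^\infty(A,C)$. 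That bootstrapping device is exactly what the $\biginf$-framework is designed to permit, and it spares you from re-examining any of the analytical estimates in the composition proof; it is worth internalizing as the general mechanism by which pointwise asymptotics are promoted to $\Ci$ asymptotics once a soft a priori $\biginf$-bound is in hand.
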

\begin{proof} Assertion \ref{item:1} follows the preliminary
observation on $E^kb$ and the second part of Lemma \ref{lem:dev_der_bor}. 
Assertion \ref{item:2} follows from the first part of Lemma 
\ref{lem:dev_der_bor}.  Claim \ref{item:b} follows from the fact that the
composition of two kernels in $\biginf ( k^{n})$ is in $\biginf (k^{2n})$, and
$\biginf ( k^{2n}) \cap \lag (A,C) = \lag^{\infty}  ( A,C)$ by the second part
of Lemma \ref{lem:dev_der_bor}. Claims \ref{item:a} and \ref{item:c}  are
straightforward. Ones proves claim \ref{item:d}  by arguing as in the proof of
Lemma \ref{lem:comp_der_lag}. Part \ref{item:4}
follows from the second part of Lemma \ref{lem:dev_der_bor}.
\end{proof}

\begin{rem} \label{sec:rem_linfty}
  We can adapt Theorems  \ref{theo:constr-proj} and
  \ref{theo:unitary_equivalence} to the spaces $\mathcal{L}^{\infty}$:  
  \begin{enumerate}
    \item  in
Theorem \ref{theo:constr-proj}, if we start with $P \in \lag^{\infty} ( A)$,
then $\chi ( P) \in \lag ^{\infty} (A)$.
\item in Theorem
\ref{theo:unitary_equivalence}, if $\Pi$ and $\Pi'$ are in
$\lag^{\infty} (A)$ and $\lag^{\infty} (B)$ respectively, then we can choose
$U \in \lag^{\infty} (A,B)$.
\end{enumerate}
In both cases, the only change in the proof is the
fact that for any families of operators $Q_k, Q'_k : \Ci
(M,L^k \otimes A) \rightarrow \Ci (M, L^k\otimes A)$ and $Q''_k : L^2 (M, L^k
\otimes A) \rightarrow L^2 (M, L^k \otimes A)$, by \cite[Section
4.3]{oim}, if the Schwartz kernel
families of $(Q_k)$ and $(Q'_k)$ are respectively  in $\biginf (k^{-N})$ and
$\biginf (k^{-N'})$, and the operator norms of $Q_k^{''}$ are in $\bigo (1)$,
then the Schwartz kernel family of $Q_k Q''_k Q_k'$ is in $\biginf (k^{-(N
  +N')})$. 
\qed \end{rem}

By Theorem  \ref{theo:lag}, we already know how to compute the symbols of
$P^*$, $PQ$, $f P$ or $gP$ in terms of the symbols of $P$ and $Q$. To complete
this, we compute the symbol of the compositions of $P$ with the covariant
derivatives $\nabla_X^{L^k \otimes A}$ and $\nabla_X^{L^k \otimes B}$. Recall
that for any $Y \in T_xM$, we defined in the introduction some  endomorphisms
$\rho (Y) \in \op{End} ( \D ( T_xM))$  in \eqref{eq:rho}. If $Y = U + \con V$ with $U, V \in T_x ^{1,0}
M$,  then $\rho ( Y)=
\rho ( U) + \rho ( \con V)$ where $\rho(U)$ is the multiplication by $i \om
(U, \cdot)$ and $\rho (\con{V})$ is the derivation with respect to $\con{V}$.

\begin{lemme} \label{lem:symb_der}
  For any $P \in \mathcal{L}^{\infty}( A,B)$ and vector field $X$
  of $M$, we have
  \begin{gather*}
    \si_0 (k^{-\frac{1}{2}} P_k \circ \nabla^{L^k
      \otimes A}_X) (x) = \si_0 ( P_k) (x) \circ \rho ( X(x) ) \\
  \cB   \si_0 (k^{-\frac{1}{2}} \nabla^{L^k
      \otimes B}_X \circ P_k) (x) = \rho ( X(x) ) \circ \si_0 ( P_k) (x) .  \clb
  \end{gather*}
  \end{lemme}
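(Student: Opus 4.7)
By part (3d) of Proposition \ref{prop:versionlinfty}, both compositions $k^{-1/2} P_k\circ \nabla_X^{L^k \otimes A}$ and $k^{-1/2}\nabla_X^{L^k \otimes B}\circ P_k$ belong to $\lag^\infty(A,B)$, so each admits a well-defined symbol in $\Ci(M, \symb(M)\otimes\op{Hom}(A,B))$. I will compute the symbol at an arbitrary base point $x_0\in M$ and focus on the second identity; the first then follows by adjunction, using that $(P\circ\nabla_X)^* = -\nabla_X\circ P^* - \op{div}(X)\,P^*$, that the divergence term has zero symbol at order $0$, and that $\rho(X(x_0))$ is skew-Hermitian on $\D(T_{x_0}M)$ for real $X$ (multiplication by $i\om(W,\cdot)$ is the negative adjoint of $\partial_{\con W}$ in the Bargmann--Fock inner product).

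Fix complex coordinates $z_i$ and a unitary frame $t$ of $L$ normal at $x_0$, and parallel unitary trivializations of $A$ and $B$ near $x_0$. Write $X(x_0) = W + \con W$ with $W\in T^{1,0}_{x_0}M$. The key infinitesimal computation is
\[ \nabla_X^{L,1} E(x_0+\xi, x_0) = f_X(\xi)\, E(x_0+\xi, x_0), \qquad f_X(\xi) = i\,\om_{x_0}(W,\xi) + \bigo(|\xi|^2), \]
obtained from \eqref{eq:E_linearise} and $\nabla t|_{x_0}=0$: differentiating $e^{-\frac{1}{2}|z(\xi)|^2}$ along $X$ yields $-\tfrac{1}{2}(W\cdot\con z(\xi) + z(\xi)\cdot\con W)$ at linear order in $\xi$, the frame contribution $\nabla^L_X t/t$ near $x_0$ equals $\tfrac{1}{2}(z(\xi)\cdot\con W - \con z(\xi)\cdot W)$, and the sum collapses to $-W\cdot\con z(\xi) = i\om_{x_0}(W,\xi)$.

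Inserting the local expansion \eqref{eq:new_expansion} for $P_k$ and applying the Leibniz rule to $k^{-1/2}\nabla_X^{L^k\otimes B}\circ P_k$ at $(x_0+\xi, x_0)$, three types of contribution arise: differentiating $E^k$ gives $kf_X\cdot E^k\cdot a_p$; the chain rule on the rescaled argument of $a_p$ gives $k^{1/2}\,X^i(x_0+\xi)\,\partial_{\eta^i} a_p(x_0, k^{1/2}\xi)$; and the $B$-connection form contributes a term of size $\bigo(|\xi|)$ thanks to the parallel-frame choice. After the $k^{-1/2}$ prefactor and the substitution $\tilde\xi = k^{1/2}\xi$, and using that the relevant support has $|\xi|=\bigo(k^{-1/2})$, the first two contributions produce the new leading polynomial
\[ \tilde a_0(x_0,\tilde\xi) \;=\; i\om_{x_0}(W,\tilde\xi)\,a_0(x_0,\tilde\xi) \;+\; \partial_{X(x_0)} a_0(x_0,\tilde\xi), \]
all other pieces entering $\lag_1^\infty(A,B)$.

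To identify this with the desired symbol, the composition rule \eqref{eq:rel_bizare} gives $\op{Op}(\tilde a_0) = (M_{i\om(W,\cdot)} + \partial_{X(x_0)})\circ\op{Op}(a_0)$ as endomorphisms of $\pol(T_{x_0}M)$. Restricting to $\D(T_{x_0}M) = \C[\con z_1,\ldots,\con z_n]$: multiplication by $i\om(W,\cdot) = -W\cdot\con z$ preserves $\D$ and equals $\rho(W)$, whereas $\partial_{X(x_0)}|_\D = (\partial_W + \partial_{\con W})|_\D$ reduces to $\partial_{\con W} = \rho(\con W)$ because $\partial_W$ annihilates antiholomorphic polynomials. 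Hence $\si_0(k^{-1/2}\nabla_X\circ P)(x_0) = \rho(X(x_0))\circ\si_0(P)(x_0)$. The main technical step is to show that the error terms arising from the Taylor expansion of $X$ around $x_0$ (producing $\partial_j X^i$-dependent corrections), from the subleading piece of $f_X$, and from the $\bigo$-remainders of \eqref{eq:new_expansion} after differentiation, all land in $\lag_1^\infty(A,B)$; this is the content of the uniform derivative estimates of Lemmas \ref{lem:estimate} and \ref{lem:comp_der_lag} and the $\biginf$ framework of Section \ref{sec:derivatives}, which prevent loss of control under covariant differentiation.
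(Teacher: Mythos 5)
Your proof is correct, and it takes a genuinely different route from the paper's. Both approaches share the adjunction step (passing between $P\circ\nabla_X$ and $\nabla_X\circ P$ via $\rho(X)^*=-\rho(X)$) and both rest on the same core infinitesimal computation, $(\nabla_X E)/E = i\om_{x_0}(W,\cdot) + \bigo(2)$ in normal data at $x_0$. Where you diverge is in the packaging: you differentiate the Schwartz-kernel expansion \eqref{eq:new_expansion} directly, extract the new leading polynomial $\tilde a_0 = i\om_{x_0}(W,\cdot)\,a_0 + \partial_{X(x_0)}a_0$, and then identify $\op{Op}(\tilde a_0)$ with $\rho(X(x_0))\circ\op{Op}(a_0)$ on $\D(T_{x_0}M)$. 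The paper instead applies Proposition \ref{prop:peaked-sections_++}: it shows $k^{-1/2}\nabla_X\Phi^f_k = \Phi^g_k + \bigo(k^{-1/2})$ with $g=\rho(X(x))f$, and lets the peaked-section characterisation of $\tilde\si_0$ finish the job. Your route is slightly more hands-on but has the virtue of exhibiting the new kernel coefficient explicitly; the paper's is shorter because the peaked-section machinery absorbs the remainder bookkeeping you have to handle with Lemmas \ref{lem:estimate}, \ref{lem:comp_der_lag} and the $\biginf$ framework.

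One inaccuracy worth fixing: you appeal to \eqref{eq:rel_bizare} to pass from $\tilde a_0 = (M_{i\om(W,\cdot)}+\partial_{X(x_0)})a_0$ to $\op{Op}(\tilde a_0) = (M_{i\om(W,\cdot)}+\partial_{X(x_0)})\circ\op{Op}(a_0)$. That equation governs compositions $\op{Op}(f)\circ\op{Op}(g)$, but $M_{i\om(W,\cdot)}+\partial_{X(x_0)} = -\sum\al_i a_i^* + \sum\con\al_i a_i$ (with $W=\sum\al_i\partial_i$) is not of the form $\op{Op}(f)$, so \eqref{eq:rel_bizare} does not apply directly. The identity you need is $a_i\circ\op{Op}(g)=\op{Op}(a_i g)$ and $a_i^*\circ\op{Op}(g)=\op{Op}(a_i^* g)$, which is the kernel computation appearing (with adjoint variables) in the proof of Lemma \ref{lem_kenrel_u_alpha_beta}, and which also underlies the commutator identities used in the proof of Lemma \ref{lem:ledernier}. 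With that replacement the argument goes through: $M_{i\om(W,\cdot)}+\partial_{X(x_0)}$ preserves $\D$, equals $\rho(X(x_0))$ there (since $\partial_{z_i}|_\D=0$ and $a_i^*|_\D=\con z_i$), and $\op{Op}(a_0)$ preserves $\D$ as well, so the restriction indeed gives $\rho(X(x_0))\circ\si_0(P)(x_0)$.
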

  \begin{proof} We deduce one formula from the other by taking adjoint. To
    prove the first one, it suffices by Proposition
    \ref{prop:peaked-sections_++} to show that if $(\Phi_k^f )$ is the peaked
    section associated to $ f\in \D(T_xM) \otimes A_x$, then
    $$k^{-\frac{1}{2}}
    \nabla_X \Phi_k^f = \Phi_k^g + \bigo ( k^{-\frac{1}{2}}) $$
    where $g = \rho
    ( X(x)) f $. This is easily checked if we use the normal coordinates as in
    \eqref{eq:coordonnees_z} centered at $p_0 = x$. We have to derivate \eqref{eq:phi_peaked_section_+}.
    We have first that $k^{-\frac{1}{2}} \nabla_X ( E^k) = k^{\frac{1}{2}} E^k (\nabla_X
    E)E^{-1}$ and by \eqref{eq:E_linearise}, $$(\nabla_{\partial_i} E)E^{-1} =
    - \con{z}_i + \bigo (2), \qquad (\nabla_{\con{\partial}_i} E)E^{-1} =
    \bigo (2).$$
    Second we have $ k^{-\frac{1}{2}} \partial_i f ( k^{\frac{1}{2}} \xi ) = 0$
    since $f \in \C [ \con{z}_1, \ldots, \con{z}_n]$ and $ k^{-\frac{1}{2}} \con{\partial}_i f ( k^{\frac{1}{2}} \xi ) =  
( \partial f / \partial \con{z}_i) (k^{\frac{1}{2}} \xi )$.  To conclude
recall that $\rho ( \partial_i) $ is the multiplication by $-\con{z}_i$ whereas
$\rho ( \con{\partial}_i)$ is the derivation with respect to $\con{z}_i$.  
  \end{proof}

\subsection{Kostant-Souriau operators and subprincipal estimates} 

In our context, the Kostant-Souriau operators are the operators of the form
$$ f +  \tfrac{i}{k} \nabla_X^{L^k \otimes A} : \Ci ( M , L^k \otimes A)
\rightarrow \Ci  (M , L^k \otimes A)$$ where $ f \in \Ci ( M)$ and $X$ is its
Hamiltonian vector field, that is  $\om (X, \cdot ) + df =0$.
\begin{lemme} \label{lem:ledernier} 
For any $f \in \Ci ( M, \R)$ with Hamiltonian vector field $X$, for any $P \in \lag^{\infty}_0 (
  A)$ we have
  \begin{itemize}
  \item  $ [f, P]$ belongs to $\lag_{1}^{\infty} (A)$
   \item  $[f,P] \equiv (ik)^{-1} [\nabla_X^{L^k
       \otimes A} , P]$ modulo $\lag_{2}^{\infty} (A)$.
   \end{itemize}
   So the commutator $[f +
  \frac{i}{k} \nabla_X^{L^k \otimes A} , P]$ belongs to $\lag_{2}^{\infty}(A)$. 
\end{lemme}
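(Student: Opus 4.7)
Viewing $f \in \Ci(M,\R)$ as the scalar endomorphism $f\cdot\op{id}_A$ of $A$, Proposition \ref{prop:versionlinfty} shows that $(fP_k)$ and $(P_kf)$ both belong to $\lag_0^{\infty}(A)$, and by the composition part of Theorem \ref{theo:lag} their $\si_0$-symbols at $x$ are $f(x)\si_0(P)(x)$ and $\si_0(P)(x)\,f(x)$, which coincide because $f(x)$ is scalar. Hence $\si_0([f,P])=0$ and the symbol exact sequence puts $[f,P]$ in $\lag_1^{\infty}(A)$.

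\textbf{Step 2 (computing both $\si_1$'s).} Proposition \ref{prop:versionlinfty} also gives $k^{-1/2}\nabla_X^{L^k\otimes A}P$ and $k^{-1/2}P\nabla_X^{L^k\otimes A}$ in $\lag_0^{\infty}(A)$, whence $(ik)^{-1}[\nabla_X^{L^k\otimes A},P] \in \lag_1^{\infty}(A)$; combining Lemma \ref{lem:symb_der} with the identity $\si_1=\si_0\circ (k^{1/2}\cdot)$ of Theorem \ref{theo:lag}, one gets
\[
\si_1\bigl((ik)^{-1}[\nabla_X^{L^k\otimes A},P]\bigr)(x) = -i\bigl[\rho(X(x)),\,\si_0(P)(x)\bigr].
\]
For the bracket $[f,P]$, write the global expansion of $P_k$ with coefficients $b_\ell \in \Ci(M^2,\op{End} A)$ and leading local polynomial $a_0(y,\xi)=\sum_{\ell\le 0}b_{\ell,-\ell}(y,\xi)$ as in Proposition \ref{prop:filtration-symbol}. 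Its kernel is obtained by multiplying by $f(x)-f(y)=df_y(\xi)+\bigo(|\xi|^2)$; applying Proposition \ref{prop:filtration-symbol} at level $q=1$ picks out only the first-order Taylor piece, which multiplies the order-zero Taylor terms of each $b_\ell$ with $\ell\le 0$, giving
\[
\si_1([f,P])(y) = \op{Op}\bigl(df_y(\cdot)\,a_0(y,\cdot)\bigr)\big|_{\D(T_yM)}\otimes\op{id}_A.
\]

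\textbf{Step 3 (algebraic identity and conclusion).} It suffices to prove that for any $q\in\pol(T_yM)$,
\[
\op{Op}\bigl(df_y(\cdot)\,q(\cdot)\bigr)\big|_{\D} = -i\bigl[\rho(X(y)),\,\op{Op}(q)\big|_\D\bigr].
\]
Fix an orthonormal basis $(U_j)$ of $T^{1,0}_yM$ with $\tfrac1i\om(U_i,\con U_j)=\delta_{ij}$ and write $X(y)=\sum\al_jU_j+\con\al_j\con U_j$. The Hamilton equation $df_y=-\iota_{X(y)}\om$ yields $df_y = i\sum\con\al_j\zeta_j - i\sum\al_j\con\zeta_j$, whereas \eqref{eq:rho} gives $\rho(X(y))=-\sum\al_j\con\zeta_j+\sum\con\al_j\partial_{\con\zeta_j}$ on $\D=\C[\con\zeta]$. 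Matching the $\al_j$- and $\con\al_j$-coefficients reduces the claim to the two identities
\[
\op{Op}(\con\zeta_j q)\big|_\D = -\bigl[\con\zeta_j,\op{Op}(q)\big|_\D\bigr], \qquad \op{Op}(\zeta_j q)\big|_\D = -\bigl[\partial_{\con\zeta_j},\op{Op}(q)\big|_\D\bigr],
\]
each checked by a direct Bargmann-Fock computation using $\op{Op}(\con\zeta_j)=-a_j^*\Vvv_{00}$ and $\op{Op}(\zeta_j)=\Vvv_{00}a_j$ from Lemma \ref{lem_kenrel_u_alpha_beta} together with the action of the $\Vvv_{\al\be}$ in Proposition \ref{prop:linearlandau}. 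This yields $\si_1\bigl([f,P] - (ik)^{-1}[\nabla_X^{L^k\otimes A},P]\bigr)=0$, so the difference lies in $\lag_2^{\infty}(A)$, which is equivalent to $\bigl[f+(i/k)\nabla_X^{L^k\otimes A},P\bigr]\in\lag_2^{\infty}(A)$.

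\textbf{Main obstacle.} The delicate point is Step 3: multiplication by $df_y(\xi)$ moves a polynomial out of $\D$ (it introduces $\zeta$-factors), while the commutator with $\rho(X(y))$ preserves $\D$; the two are nevertheless equal on $\D$ once transported through the $\widetilde\symb$-$\symb$ correspondence of Proposition \ref{prop:tilde_symb_and_op}. Steps 1 and 2 are essentially symbol bookkeeping from Sections \ref{sec:operators} and \ref{sec:landau-levels-cn}, and the cancellation in Step 2 that places $[f,P]$ already in $\lag_1$ (rather than merely $\lag_0$) is what makes the identity at level $\lag_2$ meaningful.
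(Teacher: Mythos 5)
Your proof is correct and follows essentially the same route as the paper's: both establish that $[f,P]\in\lag_1^\infty(A)$ and then match $\si_1([f,P])$ with $\si_1\bigl((ik)^{-1}[\nabla_X^{L^k\otimes A},P]\bigr)$ through the Bargmann--Fock commutator identities, your two displayed identities in Step 3 being precisely the restriction to $\D$ of the paper's $[\op{Op}(b),a_i]=\op{Op}(z_ib)$ and $[\op{Op}(b),a_i^*]=\op{Op}(\con z_ib)$, fed through Lemma \ref{lem:symb_der}. The only cosmetic differences are that the paper obtains the first bullet directly from the vanishing of $f(x)-f(y)$ along the diagonal rather than from the symbol cancellation you use, and it works with $\op{Op}$ in $\widetilde{\symb}(\C^n)$ before restricting to $\D$.
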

\begin{proof} The Schwartz kernel of $[f, P]$ is the product of $g(x,y)= f(x) -
  f(y)$ by the Schwartz kernel of $P$. Since $g$ vanishes along the diagonal,
  this implies that $[f, P] \in \lag_1 ( A)$. Furthermore, it follows from the
  definition \eqref{eq:def_si_tilde} of the symbol  that if $\wt{\si}_0 ( P)(x) = \op{Op} ( b)$
  with $b \in \pol (T_xM)$, then $\wt{\si}_1 ( [f, P] )(x) = \op{Op} ( \ell b )$ where
  $\ell = d_x f \in T_x^*M$. Now a computation from  \eqref{eq:sch_ker} shows
  that
  $$ [ \op{Op} ( b ) , a_i ] = \op{Op} ( z_i b) , \qquad [ \op{Op} (b) , a_i^*
  ] = \op{Op} ( \con{z}_i b ) $$
  where as in Section \ref{sec:landau-levels-cn} we use the annihilition and
  creation operators $a_i = \partial_{\con{z}_i}$, $a_i^* = \con{z}_i -
  \partial_{z_i}$. \cB  So working with normal coordinates at $p_0 =x$ as in
  \eqref{eq:coordonnees_z},
$$ \wt{\si}_1 ( [f, P] )(x) = \sum_i (\partial_{z_i} f)(x) [ \op{Op} (b), a_i
] +  (\partial_{\con{z}_i} f)(x) [ \op{Op} (b), a_i^* ]$$ 
Moreover $ i X = ( \partial_i f) \con{\partial}_i -   (\con{\partial}_if)
\partial_i $ at $x$.
Since $ \rho ( \con{\partial}_i)$ and $\rho (
  \partial_i)$ are the restrictions of $a_i$ and $-a_i^*$ to $\C[\con{z}_1,
  \ldots, \con{z}_n]$ respectively, we deduce that
  $$ \si_1 ( [f, P] ) = i [ \si_0 (P) , \rho(X)] .$$
On the other hand, by Lemma  \ref{lem:symb_der},   $\si_0 ( \bigl[ k^{-\frac{1}{2}}
\nabla_X^{L^k \otimes A}, P] ) = [ \rho(X), \si_0 (P) ]$. So 
$$ \si_1 ( [f +   \tfrac{i}{k} \nabla_X^{L^k \otimes A} , P] ) =0 $$
and the result follows. \clb 
\end{proof}

Let us apply this result to the Toeplitz algebra associated to a self-adjoint
projector $\Pi \in \lag^{\infty}(A) \cap \lag^{+}(A)$, whose symbol $\pi$ is
the projector on a subbundle $F$ of $\D (TM ) \otimes A$ having a definite
parity. Introduce the operators associated to $f \in \Ci ( M )$, $X , Y \in
\Ci ( M , TM)$, 
$$ T_k (f) = \Pi_k f \Pi_k, \qquad T_k ( X, Y) = k^{-1} \Pi_k \nabla^{L^k
  \otimes A}_X \nabla^{L^k
  \otimes A}_Y \Pi_k .$$
By Proposition \ref{prop:versionlinfty}, $T_k (X, Y)$ belongs to $\lag ^+(A)$
so it belongs to the Toeplitz algebra \eqref{eq:Toeplitz_Pi} associated to
$\Pi$. By Lemma \ref{lem:symb_der}, its symbol is $\pi \rho (X) \rho (Y)
\pi$.  
\begin{prop} \label{prop:subsymbolic}
  For any $f,g \in \Ci (M)$,
\begin{gather} \label{eq:lastproofeq} 
  T_{k} (f) T_k (g) = T_k (fg) + k^{-1} T_k (  X,Y) + \bigo (  k^{-2})
\end{gather}
where $X$ and $Y$ are the Hamiltonian vector fields of $f$ and $g$ respectively.
Consequently
$ i [T_k (f) , T_k (g) ] = k^{-1} T_k ( \om (X,Y) ) + \bigo ( k^{-2})$.
\end{prop}
\begin{proof}
 By a straightforward computation,
  we have
$$ \Pi f \Pi g \Pi = \Pi fg \Pi + \Pi [f, \Pi ] [g, \Pi ] \Pi $$
By lemma \ref{lem:ledernier}, $\Pi [f, \Pi] [g, \Pi] \Pi$ belongs to $\lag_2
(A)$ and its symbol is
\begin{gather*} 
\si_2 ( \Pi [f, \Pi] [g, \Pi] \Pi )   = - \pi [\rho (X) , \pi] [ \rho (Y),
\pi] \pi
\end{gather*}
\cB Observe that $\rho (X) |_x$ is an odd operator of $\D (T_xM)$. Since $F_x$ has
a definite parity,  every endomorphism of
$F_x$ is even. So  $\pi \rho (X)
\pi |_x$ is at the same time odd and even, so $\pi \rho (X) \pi =0$. \clb Similarly $\pi \rho (Y) \pi =0$, so
\begin{gather*} 
\si_2 ( \Pi [f, \Pi] [g, \Pi] \Pi )   =    \pi \rho (X) \rho (Y) \pi =  \si_2 ( k^{-2} \Pi \nabla_X^{L^k \otimes A} \nabla_Y^{L^k \otimes A} \Pi )
\end{gather*}
which proves \eqref{eq:lastproofeq}. Consequently, the rescaled commutator of $T_k(f)$,
$T_k(g)$ satisfies
$$ k [T_k (f), T_k (g) ] = T_k (X, Y) - T_k (Y,X) + \bigo (k^{-1}), $$
so its symbol is $\pi ( [\rho (X), \rho (Y) ] \pi$. To conclude the proof, we
simply use that 
\begin{gather} \label{eq:commutator_rho}
[\rho (X), \rho (Y) ] = \tfrac{1}{i} \om (X,Y),
\end{gather}
as follows easily by using a basis $U_i$ of $T_x^{1,0}M$  such that
$\frac{1}{i} \om (U_i, \con{U}_j) = \delta_{ij}$,   $\rho ( U_i) = - a_i^*$ and $\rho ( \con{U}_i ) = a_i$.
\end{proof}

\subsection{Proofs of Theorems \ref{theo:dim_landau},
  \ref{theo:Toeplitz_Landau} and \ref{theo:ladder}} \label{sec:ledernier} 

In this last section, we complete the proof of the theorems stated in the introduction, a
generalization actually since we will consider more general projectors.

Let $\Pi \in \mathcal{L}^{\infty}(A) \cap \mathcal{L}^{+} (A)$ be a self-adjoint projector with
symbol $\pi= \pi_m \otimes \op{id}_A$, where $\pi_m$ is the projector of $\D (TM)$
onto $\D_m (TM)$. Such an operator exists by Theorem
\ref{theo:constr-proj} and Remark \ref{sec:rem_linfty}.
Alternatively, the projector $\Pi = ( \Pi_{m,k})$ onto the $m$-th Landau level defined in
\eqref{eq:projecteur_fibre_auxiliaire}  has the expected properties  \cite[Theorems 5.2,
5.3]{oim_copain}. 

By Theorem \ref{theo:dim_general}, the dimension of $\Hilb_k= \op{Im}( \Pi_k)$ is
$$  \op{dim} \Hilb_k = \int_M \op{ch} ( L^k \otimes A \otimes \D_m(TM) ) \; \op{Td}
(M)$$
when $k$ is sufficiently large, which implies Theorem  \ref{theo:dim_landau}. 

Define the Toeplitz algebra
$$ \mathcal{T}^{\infty} = \{ P \in \lag^{\infty}(A) \cap \lag^{+}
(A)/\; \Pi P \Pi = P \}. $$
Clearly $\mathcal{T}^{\infty}$ is contained in the
Toeplitz algebra $\mathcal{T}$ defined in \eqref{eq:Toeplitz_Pi}, and by
assertion 2 of Proposition \ref{prop:versionlinfty}, the difference is rather
small: for every $ P \in
\mathcal{T}$, there exists $P' \in \mathcal{T}^{\infty}$ unique modulo $\biginf
( k^{-\infty})$ such that $P' = P  + \bigo ( k^{-\infty})$. Recall the symbol
map $\tau_0$ introduced in Theorem \ref{theo:toeplitz} and denote by $\tau$
its restriction to $\mathcal{T}^{\infty}$
$$\tau : \mathcal{T}^{\infty} \rightarrow \Ci ( M , \op{End} ( \D_m (TM) \otimes A))
.$$
We could as well consider the maps $\tau_q$ with $q \geqslant 1$ but we will limit ourselves to
$\tau_0$. By Theorem \ref{theo:toeplitz} and Assertions 2, 4 of Proposition
\ref{prop:versionlinfty}, $\tau$ is onto and its kernel is $k^{-1}
\mathcal{T}^{\infty}$. It follows as well from Theorem \ref{theo:toeplitz}
that for any $P,Q \in \mathcal{T}^{\infty}$
\begin{gather*} 
\tau (PQ) = \tau ( P) \tau (Q), \quad \| P_k \| = \sup_{x \in M} \|
\tau(P)_x\| + \op{o} (1)  \\
P_k (x,x) = \Bigl( \frac{k}{2\pi} \Bigr)^n \op{tr} ( \tau ( P)_x) + \bigo (k^{-1})
\end{gather*}
Choose any connection on $A$. By Proposition \ref{sec:rem_linfty}, for any $ f \in \Ci ( M , \op{End} A)$,
$p \in \N$ and vector fields $X_1$, \ldots, $X_{2p}$ of $M$, the operator 
\begin{gather} \label{eq:toeplitz_mult_der}
T_k ( f, X_1, \ldots, X_{2p}) = k^{-p} \Pi_k f \nabla_{X_1}^{L^k \otimes A}
\ldots \nabla_{X_{2p}}^{L^k \otimes A}
\Pi_k 
\end{gather}
belong to $\mathcal{T}^{\infty}$ and \cB by Lemma \ref{lem:symb_der} \clb its $\tau$-symbol is
$ ( \pi_m \rho(X_1) \ldots
\rho(X_{2p})\pi_m) \otimes  f $. By Lemma \ref{lem:algebraic},  these symbols
span  $\Ci ( M ,
\op{End} ( \D_m (TM) \otimes A))$ as a vector space, we deduce that any  $P \in \mathcal{T}^{\infty}$ is of the form
$$ P_k = \sum_{\ell =0 }^N  k^{-\ell} P_{\ell,k} + \bigo ( k^{-(N+1)}) , \qquad
\forall N \in \N$$
where for any $\ell$, $(P_{\ell,k})_k$ is a finite sum of operators such as
\eqref{eq:toeplitz_mult_der}. So in the case where the auxiliary bundle $A$ is trivial, $\mathcal{T}^{\infty}$ is the space $\mathcal{T}^{\op{sc}}_m$ defined in
the introduction. Last assertion of Theorem \ref{theo:Toeplitz_Landau} follows
from Proposition \ref{prop:subsymbolic}.

\begin{lemme}  \label{lem:algebraic} $ $
  \begin{enumerate}
  \item $\op{End} \D_m(T_xM)$ is spanned as a vector space by the
    $$\pi_m   (x) \rho (X_1) \ldots \rho (X_{2p}) \pi_m (x) $$ where $p \in \N$ and $X_1$,
    \ldots, $X_{2p} \in T_xM$.  
\item  The linear map $\Psi_x : T^{(1,0)} _xM \otimes T^{(0,1)} _xM \rightarrow \op{End} \D_m
  (T_xM)$  such that $\Psi (U \otimes
  \con{V})= \pi_m (x) \rho (U)
  \rho (\con{V}) \pi_m (x)$, is injective when $m \geqslant 1$.  
  \end{enumerate}
 \end{lemme}
 \begin{proof} Introduce a
    basis  $(U_i)$  of $T^{1,0}_xM$ such that $\frac{1}{i} \om ( U_i ,
    \con {U}_j) = \delta_{ij}$ and let $z_i = \frac{1}{i} \om ( \cdot,
    \con{U}_i)$. So $ \D
(T_x M) = \C [\con{z}_1 , \ldots, \con{z}_n]$, $\rho (
U_i) P = -  \con{z}_i P = - a_i^* P$ and $ \rho ( \con{U}_i)  P = \partial P / \partial
\con{z}_i = a_i P$. For any $\al $, $\be$ in $\N^n$, $f_{\al, \be}=
(\al!)^{-1} (a^*)^{\be}   a^\al$ satisfies $f_{\al,\be} (\con{z}^\al ) =
\con{z}^\be$ and for any $\ga \in \N^n$ such that $\ga \neq \al$ and $| \ga | = |
\al |$, $f_{\al,\be} (\con{z}^\ga) = 0$.  So the family $\pi_m(x) f_{\al, \be}
\pi_m (x)$, $| \al | = | \be | =m$ is a basis of $\op{End} F_x$.  This proves
the first assertion. The second one is the fact that the restrictions to $\D_m ( T_xM)$ of the
  $ \con{z}_i \con{\partial}_j$, $i, j =1, \ldots , n$ are linearly
  independent.  
 \end{proof}

 Equation \eqref{eq:bm_1-f-g} and Part 2 of Remark \ref{rem:Toeplitz} are consequences on the following
 Proposition.  
 \begin{prop} \label{prop:fonction}
  For any $f \in \Ci (M)$, with Hamiltonian vector field $X$, we have 
  \begin{itemize}
  \item if $n =1$ or $m=0$,
    $ T_k (f)^2 - T_k (f^2)  =  -k^{-1} ( \frac{1}{2} + m)T_k ( |X|^2) + \bigo ( k^{-2}).$ 
        \item if $n \geqslant 2$ and $m \geqslant 1$, $ X \neq 0$ implies
          that there exists no function $h \in \Ci (M)$ such that $T_k (f)^2 -
          T_k ( f^2) =
          k^{-1} T_k (h) + \bigo ( k^{-2})$.
        \end{itemize}
\end{prop}
\begin{proof}  By Proposition
  \ref{prop:subsymbolic},
  $ T_k (f)^2 = T_k (f^2) + k^{-1} S_k $ where $S_k$ is a Toeplitz operator
  with symbol $\pi \rho(X)^2 \pi$.  
  Writing $X = U + \con {U}$ with $U \in T^{1,0}_xM$, we have 
  $$ \pi_m  \rho (X)^2 \pi_m = \pi_m \bigl(  \rho (U) \rho (\con{U}) + \rho
  ( \con {U} ) \rho (U) \bigr) \pi_m  = 2 \Psi (U \otimes \con{U}) - \tfrac{1}{2} g
  (X,X)$$
  where $\Psi$ is the bundle map introduced in Lemma \ref{lem:algebraic}, and we have used first that $\pi_m \rho ( U) ^2 \pi_m =0 =\pi_m
  \rho(\con{U})^2 \pi_m $ and then that $[ \rho(\con{U}) , \rho (U)] = i \om (
  U, \con {U}) = -\frac{1}{2} g (X,X)$ by \eqref{eq:commutator_rho}.  Now if $m=0$, then $\Psi =0$. If
  $n=1$, then $2 \Psi ( U \otimes \con{U}) = - m g (X,X)$. This
proves the first assertion. For the second one, it
  suffices to prove that if $n \geqslant 2$, $m \geqslant 1$ and $X(x) \neq
  0$, then $\Psi ( U \otimes \con{U})_x \in \op{End}( \D_m ( T_xM))$ is not
  scalar. This follows from the fact that $\Psi_x$ is injective, so that $\Psi_x
  ( \al ) $ is scalar only when $ \al$ is a multiple of $\sum_{i=1}^{n} U_i \otimes
  \con{U}_i$, which never happens for $\al = U \otimes \con{U}$ when $n
  \geqslant 2$ and $U \neq 0$. 
\end{proof}
\clb

Let us prove now Theorem \ref{theo:ladder}. Introduce a quantization $( \Hilb_{F,k})$  of $(M,L)$
twisted by $F = \D_m (TM) \otimes A$. We can adapt the definition
\eqref{eq:def_w_k} of $W_k$ with the auxiliary bundle $A$ by setting
\begin{gather} \label{eq:def_w_k_A}
\begin{split}
  W_k :  \Ci ( M , L^k \otimes A ) \rightarrow \Ci (
  M, L^k \otimes F ), \qquad k \in \N\\
  W_k = 
  R_m D_{G^{\otimes (m-1)} \otimes A ,k} \circ  D_{G^{\otimes
    (m-2)}\otimes A,k} \circ \ldots \circ D_{G \otimes A, k} \circ D_{A,k}
\end{split}
\end{gather}

\begin{lemme}
  The operator $(V_k =  \frac{1}{m!} k^{-\frac{m}{2}} \Pi_{F,k} W_k \Pi_{k}$,
  $k \in \N$) belongs to $ \lag ^{\infty} ( A, F)$, has the same parity as $m$ and its symbol $\si_0(V)$ viewed as a
  morphism from $\D(TM) \otimes  A$ to $\D(TM) \otimes F$ is given by
  $$ \forall \, f \in \D_p(TM), \; \forall \, a \in A, \; \si_0(V) ( f \otimes a)
  = \begin{cases}  1 \otimes f \otimes a  \text{ if $p = m $} \\ 0 \text{
    otherwise} 
\end{cases}$$
\end{lemme}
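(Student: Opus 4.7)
The plan is to apply Proposition \ref{prop:versionlinfty} and Lemma \ref{lem:symb_der} iteratively to the $m$ factors composing $W_k$, then deduce the symbol of $V$ from the multiplicativity of $\si_0$. Work locally on a chart $U \subset M$ equipped with a unitary frame $(\con{\partial}_i)_{i=1}^n$ of $T^{0,1}M$ and dual frame $(\con{\theta}_i)$ of $G = (T^{0,1}M)^*$, so that $D_{E,k} = \sum_i \nabla_{\con{\partial}_i}^{L^k \otimes E} \otimes \con{\theta}_i$. Starting from $\Pi \in \lag^{+,\infty}(A)$ with $\si_0(\Pi) = \pi_m \otimes \op{id}_A$, parts (c) and (d) of Proposition \ref{prop:versionlinfty} show that $k^{-1/2} D_{A,k} \circ \Pi \in \lag^{-,\infty}(A, A \otimes G)$, and Lemma \ref{lem:symb_der} identifies its symbol as $\sum_i \rho(\con{\partial}_i) \pi_m \otimes \con{\theta}_i \otimes \op{id}_A$.

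Iterating this argument $m$ times yields
$$k^{-m/2}\, D_{G^{\otimes(m-1)} \otimes A,k} \circ \cdots \circ D_{A,k} \circ \Pi \ \in\ \lag^\infty(A, A \otimes G^{\otimes m}),$$
with parity $(-1)^m$ (the same parity as $m$) and symbol at $x$
$$\sum_{i_1, \ldots, i_m} \rho(\con{\partial}_{i_m}) \cdots \rho(\con{\partial}_{i_1}) \pi_m(x) \, \otimes \, \con{\theta}_{i_1}(x) \otimes \cdots \otimes \con{\theta}_{i_m}(x) \otimes \op{id}_{A_x}.$$
Composition on the left with the bundle map $R_m$ replaces $\con{\theta}_{i_1} \otimes \cdots \otimes \con{\theta}_{i_m}$ by the symmetric product $\con{\theta}_{i_1} \cdots \con{\theta}_{i_m} \in \op{Sym}^m G_x = \D_m(T_xM)$, and final composition on the left with $\Pi_{F,k} \in \lag^{+,\infty}(F)$, whose symbol is $\Uu_{00} \otimes \op{id}_F$, places $V$ in $\lag^\infty(A,F)$ of the same parity as $m$, with
$$\si_0(V)(x) = \frac{1}{m!} \sum_{i_1, \ldots, i_m} \Uu_{00}(x)\, \rho(\con{\partial}_{i_m}) \cdots \rho(\con{\partial}_{i_1}) \pi_m(x) \ \otimes \ \con{\theta}_{i_1}(x) \cdots \con{\theta}_{i_m}(x) \otimes \op{id}_{A_x}.$$

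To verify the explicit formula, evaluate $\si_0(V)(x)$ on $f \otimes a$ for $f \in \D_p(T_xM)$. If $p \neq m$, then $\pi_m(x) f = 0$ and the symbol vanishes. If $p = m$, writing $\rho(\con{\partial}_i) = \partial_{\con{z}_i}$ in the coordinates $\con{z}_i$ dual to $\con{\partial}_i$ (so that $\con{\theta}_i \leftrightarrow \con{z}_i \in \D_1(T_xM)$), the $m$-fold derivative $\partial_{\con{z}_{i_1}}\cdots\partial_{\con{z}_{i_m}} f$ already lies in $\D_0(T_xM) = \C$ (so $\Uu_{00}$ acts as the identity on it), and the multinomial identity
$$\sum_{i_1,\ldots,i_m} (\partial_{\con{z}_{i_1}}\cdots\partial_{\con{z}_{i_m}} f)\, \con{z}_{i_1}\cdots\con{z}_{i_m} = m!\, f \qquad (f \in \D_m(T_xM)),$$
immediate on the monomial basis $\con{z}^\alpha$ with $|\alpha|=m$ via the $m!/\alpha!$ orderings of $\alpha$, cancels the $1/m!$ and produces $1 \otimes f \otimes a$. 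The main bookkeeping obstacle is the tracking of tensor factor orderings in the $m$-fold iteration, but since $R_m$ symmetrizes the $\con{\theta}_{i_j}$'s and the $\rho(\con{\partial}_i)$'s commute pairwise, the ordering is immaterial in the final symbol.
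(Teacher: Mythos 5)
Your proof is correct and follows essentially the same route as the paper's: iterate the composition property to pick up one factor $\varphi_A = \sum_i a_i \otimes \con{z}_i \otimes \op{id}_A$ in the symbol per application of $k^{-1/2}D$, then evaluate the resulting $\varphi_A^m\circ\pi_m$ on monomials. You are a bit more explicit than the paper in three places — invoking Lemma \ref{lem:symb_der} (and not just Proposition \ref{prop:versionlinfty}) for the symbol of $k^{-1/2}D\circ P$, spelling out that the final composition with $\Pi_{F,k}$ acts trivially because the image already lies in $\D_0(TM)\otimes F$, and writing out the multinomial identity — but the underlying argument is identical.
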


\begin{proof} 
We claim that for any even (resp. odd) operator $P \in \lag^{\infty} (B, A) $,
$(k^{-\frac{1}{2}}D_{A,k} \circ P )$ belongs to $\lag^{\infty}  ( B, A
\otimes G) $, is odd (resp. even) and its symbol is $\varphi_A \circ \si_0 ( P)$ where
$$\varphi_A = \sum_ i a_i \otimes \con{z}_i \otimes
\op{id}_A \in \symb (TM) \otimes G \otimes \op{End} A  .$$
Here we have introduced an orthonormal frame $(
\partial_i)$ of $T^{1,0}M$, $(z_i)$ is the dual frame of $(T^{1,0}M)^*$ and
$a_i =\partial_{\con{z}_i}$ is the annihilation operator. \cB This follows from
Lemma \ref{lem:symb_der} by writing $D_{A,k} s = \sum_i \con{z}_i \otimes
\nabla_{\con{\partial}_i} s$. \clb

Consequently, 
$k^{-\frac{m}{2}}R_m D_{A \otimes G^{m-1}} \circ \ldots \circ D_{A,k} \circ \Pi_k
$ belong to $\lag ( A, F)$ with symbol $\varphi_{A}^m \circ \pi_m$ where
$\varphi_A^m $ is the morphism from $\D ( TM) \otimes A$ to $\D ( TM) \otimes \D_m (TM) \otimes A$ given by 
$$ \varphi_A^m = \sum_{i_1, \ldots, i_m =1}^{n} a_{i_1} \ldots a_{i_m} \otimes
(\con{z}_{i_1} \ldots  \con{z}_{i_m})  \otimes \op{id}_A $$
Computing $\varphi_A^m ( \con{z}^{\be} )$ for any multi-index $\be$, we show
that for any $f \in \D_p(TM)$ and $ a\in A$,  $ \varphi_A^m (  f
\otimes a) = m ! (1 \otimes f \otimes a )$ when $p=m$ and $0$ otherwise.   
\end{proof}

The symbol $\sigma_0 (V)$ is exactly the symbol $\rho$ introduced in Lemma
\ref{lem:unitary-symbole}. Now Theorem \ref{theo:ladder} and the remark on the
unitarization of $V_k$ follows by the same proof as Theorem
\ref{theo:unitary_equivalence}.


\section{Appendix} \label{sec:appendix}

In this appendix we discuss the known results for surfaces with constant
curvature. Two important features appear for the negatively curved surfaces: only the lower part of the spectrum consists of
Landau levels, and moreover, there is  an isomorphism between the $m$-th Landau level and
the first level of a Laplacian twisted by the $m$-th power of the complex determinant
bundle. 

These results appeared in the physics literature, cf. in
 particular  \cite{IeLi94} for the case with surface with genus $\geqslant 2$.
 A more recent mathematical reference is \cite{Te06}.

\subsubsection*{The plane \cite{La30}}  Consider a quantum particle confined in a two
dimensional plane $(x,y)$ and subject to a constant magnetic field
perpendicular to this plane. Its Hamiltonian is the operator
\begin{gather} \label{eq:laplacian_landau}
H = -\tfrac{1}{2}
 (\nabla_x^2 + \nabla_y^2) \quad \text{ with } \quad \nabla_x = \tfrac{\partial}{ \partial x} +
 \tfrac{i}{2} B y,\;  \nabla_y = \tfrac{\partial}{\partial y} - \tfrac{i}{2}
 Bx .
\end{gather}
$B$ is a  positive constant
representing the strength of the magnetic field.
The spectrum of $H$ is $B(\frac{1}{2} + \N )$ and the Landau levels $\Hilb_m =
\op{ker} ( H - B( \frac{1}{2} + m))$ are given in terms of the ladder
operators $\nabla_z = \nabla_x - i \nabla_y$, $\nabla_{\con{z}} = \nabla_x + i
\nabla_y$  by 
\begin{gather} \label{eq:landau_level}
\Hilb_0 = \op{ker} ( \nabla_{\con z}), \qquad   \Hilb_m = ( \nabla_z)^m
\Hilb_0 , \quad m \geqslant 1 .
\end{gather}

\subsubsection*{Surface with constant curvature, \cite{IeLi94}, \cite{Te06}} 
Let $M$ be a compact orientable surface with a Riemannian metric having a constant Gauss curvature $S$. Introduce a
Hermitian line bundle $ L \rightarrow M$ with a connection $\nabla : \Ci (M, L )
\rightarrow \Om^1 (M,L)$. Assume that the curvature satisfies
\begin{gather} \label{eq:constant_magnetic}
i \op{curv} ( \nabla)  = B
\op{vol}_g 
\end{gather}
where $B$ is a non-zero constant and $\op{vol}_g$ is the
Riemannian volume. Choosing the convenient orientation for $M$, we can assume
that $B$ is positive. The quantum Hamiltonian is the Laplacian $ \Delta := \frac{1}{2} \nabla^* \nabla $ acting
on sections of $L$. Then denoting its eigenvalue by $0 \leqslant \la_0 < \la_1 <
\ldots $, it is known that 
\begin{gather} \label{eq:spec_magentic_surface}
  \la_m = B (\tfrac{1}{2} + m ) + S \tfrac{ m (m+1)}{2} \qquad \text{ if }
  B + m S >0
\end{gather}
For a sphere or a torus, $S \geqslant 0$, and these formulas describe the
whole spectrum. If the genus of $M$ is larger than $2$, then $S<0$ and the
condition $B + m S >0$ is satisfied only for a finite number of $m$. In this
case, it is not reasonable to expect an explicit formula for the other
eigenvalues. Indeed, if $S=-1$ and $L = K^r$ with  $K$ the canonical
bundle of $M$ and $r$ a positive integer, then
\eqref{eq:spec_magentic_surface} gives the first $(r+1)$ eigenvalues, and for
any $n \in \N$, $\la_{r+n} = \la_r + \tfrac{1}{2} \mu_n$, where $\{\mu_n, \; n
\in \N\}$ is the spectrum of the Laplace-Beltrami operator of $M$. This
latter spectrum depends in an essential way on the metric. Indeed, 
 by Huber's theorem \cite[Theorem 9.2.9]{Buser}, $\{ \mu_n\}$ determines the
 length spectrum of $M$.

 The
multiplicity of the first eigenvalues is
equal to: 
\begin{gather} \label{eq:mult} 
  \op{mult} ( \la_m) = B \frac{ \op{vol}_g (M)}{2\pi} +  (\tfrac{1}{2} + m)
  \chi (M) \qquad \text{ if } B +  (m+1)S >0.
\end{gather}
Here $\chi (M)$ is the Euler characteristic of $M$ and observe that $B
\op{vol}_g (M) / (2\pi)$ is the degree of $L$, so it is an integer.
\eqref{eq:mult} follows from a description of the corresponding eigenspace $\Hilb_m = \ker ( \Delta -
\la_m)$ similar to \eqref{eq:landau_level}, that we explain below. 

\subsubsection*{Proof of formulas \eqref{eq:spec_magentic_surface} and \eqref{eq:mult}}

To start with, we do not assume that the Gauss curvature $S$ and the function $B$ defined in
\eqref{eq:constant_magnetic} are constant. We choose any orientation on $M$.
Let $j$ be the complex
structure of $M$ compatible with $g$, i.e. $g(jX, jY) = g(X,Y)$ for any
tangent vectors $X$, $Y \in T_pM$  and $ X
\wedge jX >0$ if $X \neq 0$. Since we are in real dimension 2, $j$ is
integrable. Furthermore the associated volume form $\op{vol}_g$ is the
symplectic form  $\om
(X, Y) = g(jX, Y)$.

$L$ has a natural holomorphic structure such that
its $\con{\partial}$-operator is $\nabla^{0,1}$.  We denote it by
$\con{\partial}_L: \Ci ( L) \rightarrow \Ci ( L \otimes \con{K})$ with $K =
(T^*M)^{1,0}$ the canonical bundle.  The curvature of $\nabla$ has the form $\frac{1}{i} B \om$
with $B \in \Ci ( M , \R)$. The canonical bundle has a natural metric induced
by $g$. Its Chern connection, that is its connection compatible with both its
metric and holomorphic structure, has curvature $i S \om$ where $S$
is the Gauss curvature.
\begin{theo} \label{theo:appendix}
  The following identities holds:
 \begin{enumerate}
  \item Weitzenb\"ock formula: $\Delta_L = \con{\partial}^*_L \con{\partial}_L + \frac{1}{2} B$.
    \item Bosonic commutation relation: $  \con{\partial}_L \con{\partial}_L^* = \con{\partial}_{L\otimes
        K^{-1}} ^*  \con{\partial}_{L\otimes
        K^{-1}} + (B + S)$.
    \end{enumerate}
  \end{theo}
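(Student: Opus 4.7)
Both identities are local, so I will work in a local holomorphic coordinate $z$ with $\om = ig\, dz \wedge d\bar z$ and a local holomorphic frame $e$ of $L$ satisfying $|e|^2 = e^{-\phi}$. In this gauge the Chern connection reads $\nabla e = -\phi_z\, dz \otimes e$; the hypothesis $i\,\op{curv}(\nabla) = B\om$ becomes $\phi_{z\bar z} = Bg$, and the paper's convention $\op{curv}(K) = iS\om$ becomes $(\log g)_{z\bar z} = -Sg$ (so that $S>0$ on the sphere).

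For the Weitzenb\"ock formula I decompose $\nabla = \partial_L + \con{\partial}_L$ and use the orthogonality of $dz$ and $d\bar z$ to get $\nabla^*\nabla = \partial_L^*\partial_L + \con{\partial}_L^*\con{\partial}_L$. A short integration by parts in the above frame yields the adjoint formulas $\con{\partial}_L^*(v\, d\bar z \otimes e) = -g^{-1}(v_z - v\phi_z)\, e$ and $\partial_L^*(\alpha\, dz \otimes e) = -g^{-1}\alpha_{\bar z}\, e$; composing with $\con\partial_L$ and $\partial_L$ respectively shows that $\partial_L^*\partial_L - \con{\partial}_L^*\con{\partial}_L$ is multiplication by $g^{-1}\phi_{z\bar z} = B$. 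This is essentially the Bochner--Kodaira--Nakano relation at bidegree $(0,0)$ for $L$. Summing and halving gives $\Delta_L = \con{\partial}_L^*\con{\partial}_L + B/2$.

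For the bosonic commutation relation I transport $\con{\partial}_L\con{\partial}_L^*$, which lives on $\Ci(M, L\otimes \con K)$, to an operator on $\Ci(M, L\otimes K^{-1})$ via the canonical metric-induced bundle isomorphism $\iota:\con K \to K^{-1}$ given locally by $d\bar z \mapsto g^{-1}\partial_z$. Then I compute both the transported $\con{\partial}_L\con{\partial}_L^*$ and $\con{\partial}_{L\otimes K^{-1}}^*\con{\partial}_{L\otimes K^{-1}}$ in the holomorphic frame $\partial_z \otimes e$ of squared norm $ge^{-\phi}$. The adjoint formula for $\con{\partial}_{L\otimes K^{-1}}^*$ is the one above with $\phi$ replaced by the K\"ahler-Chern potential $\psi := \phi - \log g$. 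Both operators are second-order elliptic of the same principal symbol, so their difference is a zeroth-order operator, and a direct expansion collapses it to multiplication by
\[
g^{-1}\bigl(\phi_{z\bar z} - (\log g)_{z\bar z}\bigr) = B + S.
\]

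The main obstacle is bookkeeping. The identification $\iota$ is metric-induced, not holomorphic, so the transported operator is neither a Bochner nor a pure Dolbeault Laplacian on $L\otimes K^{-1}$, and its local expression acquires extra terms from the Jacobian factor $g^{-1}$ when passing between the frames $d\bar z \otimes e$ and $\partial_z \otimes e$. Keeping signs consistent — in particular matching the paper's convention $\op{curv}(K) = iS\om$ with the formula $(\log g)_{z\bar z} = -Sg$ — is what ultimately produces $+(B+S)$ rather than $\pm(B-S)$.
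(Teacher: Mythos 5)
Your proposal takes the same route as the paper's proof: pick a local holomorphic coordinate $z$ and holomorphic frame of $L$, write $\om = ih\,dz\wedge d\con z$ and $|s|^2=e^{-\varphi}$, compute $\con\partial_L^*$ and $\con\partial_{L\otimes K^{-1}}^*$ by integration by parts, identify $\con K\simeq K^{-1}$ via the metric ($d\con z \mapsto h^{-1}(dz)^{-1}$), and recognize $B=h^{-1}\varphi_{z\con z}$, $S=-h^{-1}(\ln h)_{z\con z}$ in the zeroth-order remainder. The notation differs ($g,\phi,e$ for the paper's $h,\varphi,s$) and you cite the Bochner--Kodaira--Nakano identity where the paper simply calls the Weitzenb\"ock formula classical, but the argument and the key cancellation of first-order terms are the same.
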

The Weitzenb\"ock formula is a classical relation, it holds more generally on
K\"ahler manifolds. We call the second formula the bosonic commutation
relation because it replaces the canonical commutation relation  satisfied by the creation/annihilation operators $[a, a^{*} ]
=1$. In this formula,  we identify $\con{K}$ with $K^{-1}$ through
      the metric so that the operators $ \con{\partial}_L \con{\partial}_L^*$
      and $\con{\partial}_{L\otimes
        K^{-1}} ^*  \con{\partial}_{L\otimes
        K^{-1}}$ act on the same space $\Ci ( L\otimes \con{K}) = \Ci ( L
      \otimes K^{-1})$. A similar formula were obtained in \cite[Proposition 9]{Te06} for the
      same purpose of computing the spectrum of $\Delta_L$. 
\begin{proof}[Proof of the bosonic identity]
  Introduce a local holomorphic frame $s$ of
  $L$ and a complex coordinate $z$ on $M$. We have first $\con{\partial}_L ( f s) =
  f_{\con{z}} \; s \otimes d \con{z}$. To compute the adjoint, recall that the
  scalar products of $\Ci (L)$ and $\Ci( L \otimes \con{K})$ are defined by
  integrating the pointwise scalar products against the volume form. Write $\om
  = i h dz \wedge d \con{z}$ and $|s|^2 = e^{-
    \varphi}$ with $h$ and $\varphi$ real valued functions. Then $|d \con z
  |^2  = h^{-1}$ and a direct computation leads to
  $$\con{\partial}_L^* ( f s \otimes d\con{z} ) = h^{-1} ( -
  f_z + f \varphi_z ) s.$$
  With the identification $\con{K} \simeq K^{-1}$, we have $
  d\con{z}  = h^{-1} (dz)^{-1}$. We deduce that 
  $$ \con{\partial}_L \con {\partial}_L^* ( f s \otimes (dz)^{-1}) = h^{-1} \bigl( -
  f_{z \con{z} } + f_{\con{z}} ( \varphi_z - \tfrac{h_z}{h}) + f ( \varphi_{z
    \con{z}} - \partial_{\con{z}} ( \tfrac{h_z}{h})) \bigr) s \otimes (dz )^{-1}$$
  Similar computations by using $|s \otimes dz^{-1} |^2 = h e^{-\varphi}$ leads to
  $$ \con{\partial}_{L\otimes
        K^{-1}} ^*  \con{\partial}_{L\otimes
        K^{-1}} ( f s \otimes (dz)^{-1}) = h^{-1} \bigl( - f_{z \con{z}} +
      f_{\con{z}} ( \varphi_z - \tfrac{h_z}{h} ) \bigr) s \otimes (dz)^{-1} .$$
To conclude, observe that $B = h^{-1} \varphi_{z \con z}$ and $S = - h^{-1}
\partial_z \partial_{\con z} \, \ln h $.
\end{proof}

From now on, we will assume that $B$ and $S$ are constant. With the
Weitzenb\"ock formula, we pass directly from the spectrum of $\Delta _L$ to the one
of $\con{\partial}^*_L \con{\partial}_L$. We can use the Bosonic relation
exactly as it is usually done with the Landau Hamiltonian, cf. proof of
Proposition \ref{prop:linearlandau}.  We deduce
that for any $\la \neq 0$, $\la $ is an eigenvalue of $\con{\partial}^*_L
\con{\partial}_L$ if and only if $\la - (B+S)$ is an eigenvalue of
$\con{\partial}^*_{L\otimes K^{-1}} \con{\partial}_{L\otimes K^{-1}}$.
Moreover the eigenspaces have the same dimension. Indeed $\con{\partial}_L$
restricts to an isomorphism
$$ \op{Ker} ( \la - \con{\partial}^*_L
\con{\partial}_L ) \rightarrow \op{ker} ( \la - (B+S) -
\con{\partial}^*_{L\otimes K^{-1}} \con{\partial}_{L\otimes K^{-1}} ) $$
with inverse the restriction of $\la^{-1} \con {\partial}_L^*$.
Besides this, $\ker (\con{\partial}^*_L
\con{\partial}_L)$ is the space $H^0 ( L)$ of holomorphic sections of $L$. By
Riemann-Roch theorem, $H^0 (L)$ has dimension $d + \frac{1}{2} \chi (M)$ if
the degree $d = B \op{Vol}(M)/(2\pi)$ of $L$ is larger than $- \chi (M)$.

To summarize,  when $B$ is sufficiently large, $0 $ is an eigenvalue of
$\con{\partial}^*_L \con{\partial}_L$ with multiplicity equal to $ B
\op{Vol}(M) / (2 \pi) + \frac{1}{2} \chi(M)$, and the remainder of the
spectrum is identical with the spectrum of $(B+S) + \con{\partial}^*_{L\otimes K^{-1}}
\con{\partial}_{L\otimes K^{-1}}$, multiplicities included. We can iterate this argument and deduce by
induction  the formulas \eqref{eq:spec_magentic_surface},
\eqref{eq:mult} giving the first eigenvalues of $\Delta_{L}$ with their multiplicity.
Since $\op{deg} ( L \otimes K^{-1})  = \op{deg} (L) + \chi (M)$,  we can repeat ad infinitum this
argument when $\chi (M) \geqslant 0$ and obtain the whole spectrum of $\Delta_L$; whereas for $\chi (M)
<0$, only a finite number of iterations is possible.  

\bibliographystyle{plain}
\bibliography{biblio}

\bigskip

\noindent
{\bf Laurent Charles} \\
Sorbonne Universit\'e, Universit\'e de Paris, CNRS \\
Institut de Math\'ematiques de Jussieu-Paris Rive Gauche \\
F-75005 Paris, France \\
{\em E-mail:} \texttt{laurent.charles@imj-prg.fr}\\

\end{document}